\newtheorem{Proposition}{Proposition}[section]
\newtheorem{Corollary}[Proposition]{Corollary}
\newtheorem{Definition}[Proposition]{Definition}
\newtheorem{Lemma}[Proposition]{Lemma}
\newtheorem{Theorem}[Proposition]{Theorem}
\newcounter{mt}
\newtheorem{MainTheorem}[mt]{Theorem}
\newtheorem{Remark}[Proposition]{Remark}
\newcounter{ct}
\DeclareMathOperator{\Val}{Val}
\DeclareMathOperator{\vol}{vol}
\DeclareMathOperator{\Gr}{Gr}
\DeclareMathOperator{\Span}{Span}
\DeclareMathOperator{\Dens}{Dens}
\DeclareMathOperator{\inv}{Inv}
\DeclareMathOperator{\ori}{or}
\DeclareMathOperator{\sgn}{sgn}
\DeclareMathOperator{\Ad}{Ad}
\DeclareMathOperator{\id}{id}
\DeclareMathOperator{\ad}{ad}
\DeclareMathOperator{\Sym}{Sym}
\DeclareMathOperator{\tr}{tr}
\DeclareMathOperator{\Inv}{Inv}
\DeclareMathOperator{\gr}{gr}
\DeclareMathOperator{\spt}{spt}
\newcommand{\CC}{\mathbb{C}}
\newcommand{\PP}{\mathbb{P}}
\newcommand{\C}{\mathbb{C}}
\newcommand{\p}{\mathbb{P}}
\newcommand{\R}{\mathbb{R}}
\newcommand{\RR}{\mathbb{R}}
\newcommand{\calK}{\mathcal{K}}
\newcommand{\calP}{\mathcal{P}}
\newcommand{\calV}{\mathcal{V}}
\newcommand{\calW}{\mathcal{W}}
\newcommand{\g}{\mathfrak g}
  \newcommand{\largewedge}{\mbox{\Large $\wedge$}}
\title{Invariant valuations on Lie groups}
\author[Andreas Bernig]{Andreas Bernig}
\author[Dmitry Faifman]{Dmitry Faifman}
\author[Jan Kotrbat{\'y}]{Jan Kotrbat{\'y}}
\address{Institut f\"ur Mathematik, Goethe-Universit\"at Frankfurt, Robert-Mayer-Str. 10, 60054 Frankfurt, Germany}
\email{bernig@math.uni-frankfurt.de}
\address{D\'epartement de Math\'ematiques et de Statistique, Universit\'e de
	Montr\'eal, CP 6128 succ Centre-Ville, Montr\'eal, QC H3C 3J7, Canada}
\email{dmitry.faifman@umontreal.ca}
\address{Charles University, Faculty of Mathematics and Physics, Mathematical Institute of Charles University, Sokolovsk\'a 49/83, 186 00 Prague, Czechia}
\email{kotrbaty@karlin.mff.cuni.cz}
\thanks{A.B. was supported by DFG grant BE 2484/10-1.}
\thanks{D.F. was supported by an NSERC Discovery grant and ISF grant No. 1750/20}
\thanks{J.K. was supported by CU grants PRIMUS/24/SCI/009 and UNCE/24/SCI/022.}
	\date{\today}
\begin{document}

	\begin{abstract} Convolution of valuations was introduced by the first named author and Fu for linear spaces, and later by Alesker and the first named author for compact Lie groups. In this paper we study the convolution of invariant valuations on Lie groups. First, we obtain an explicit formula for the convolution of left-invariant valuations on compact groups in terms of differential forms.
	Independently, we show that a connected Lie group  admits smooth bi-invariant valuations beyond the Euler characteristic and the Haar measure if and only if the group is the product of a compact group and a linear space. Finally, we use these two results to define the convolution of bi-invariant smooth valuations on an arbitrary unimodular Lie group, thus unifying both previously defined convolution operations.	
	\end{abstract}

		\maketitle
		
	%\tableofcontents

	\section{Introduction}

One of the most fundamental concepts in convex geometry is Minkowski addition of convex bodies.  The interplay between Minkowski sum and volume gives rise to the classical Brunn--Minkowski theory, pioneered in the first half of the twentieth century by Hilbert, Minkowski, Alexandrov, and others. In its core lies the Brunn--Minkowski inequality and, more generally, the notion of mixed volume and the Alexandrov--Fenchel inequality.

Much later on, a remarkable algebraic perspective on the classical geometric inequalities was discovered which in particular led to their far-reaching generalizations. Namely, it turns out that Minkowski sum restricted to a suitable class of convex bodies induces a natural algebra structure where the inequalities then arise as special cases of the (mixed) Hodge--Riemann relations. This was first carried out by  P.~McMullen for simple, strongly isomorphic polytopes \cite{mcmullen93}. Recently, the first and third named authors jointly with Wannerer accomplished the analogous program for smooth convex bodies in the realm of Alesker's algebraic theory of valuations \cite{bernig_kotrbaty_wannerer_HL_HR}.

Here a valuation is a function $\phi$ on the space of convex bodies satisfying
\begin{align*}
\phi(K\cup L)+\phi(K\cap L)=\phi(K)+\phi(L)
\end{align*}
for any convex bodies $K,L\subset\RR^n$ such that $K\cup L$ is convex. The graded Banach space $\Val(\RR^n)$ of all continuous, translation-invariant valuations contains a natural dense subspace $\Val^\infty(\RR^n)$ of smooth, translation-invariant valuations. By a recent result of Knoerr \cite{knoerr_finite}, the space $\Val^\infty(\RR^n)$ is spanned by valuations of the form $K\mapsto\vol(K+L)$, where $L\subset\RR^n$ is any convex body with smooth boundary and positive curvature. The Bernig--Fu convolution on $\Val^\infty(\RR^n)$ was defined in  \cite{bernig_fu06} as
\begin{align}
\label{eq:BFconvolution}
\vol(\bullet+K)*\vol(\bullet+L)=\vol(\bullet+K+L).
\end{align}

If $\RR^n$ is replaced by a general vector space $V$ without a canonical choice of the Lebesgue measure, \eqref{eq:BFconvolution} defines a product on $\Val^\infty(V)\otimes\Dens(V^*)$, where $\Dens(V^*)$ is the (one-dimensional) space of  Lebesgue measures on $V^*$. Equipped with the Bernig--Fu convolution, $\Val^\infty(V)\otimes\Dens(V^*)$ is a commutative, associative, graded algebra with unit $\vol\otimes\vol^*$. Denoting the vector addition on $V$ by $m$, the convolution can be equivalently defined as
\begin{align}
\label{eq:m*convolution}
\phi*\psi=m_*(\phi\boxtimes\psi),
\end{align}
where $\boxtimes$ denotes the exterior product of valuations and $m_*$ the push-forward under $m$, see Section \ref{ss:operations}. Besides the convolution, the space $\Val^\infty(V)$ carries the so-called Alesker product \cite{alesker04_product}. 
 Those two algebraic structures, which are in some precise sense dual to each other \cite{alesker_fourier, faifman_wannerer_fourier}, were key in much of the recent progress in integral geometry \cite{bernig_SU,bernig_G2_Spin7, bernig_fu11, bernig_hug,bernig_solanes_H2,kotrbaty_wannerer_O2, wannerer_area_measures, wannerer_unitary_module}.
 
The power of the algebraic theory of valuations relies on a deep result of Alesker \cite{alesker_irreducibility} which implies in particular that the elements of $\Val^\infty(V)$ can be represented by smooth differential forms. In these terms, the formula for the Bernig--Fu convolution becomes particularly simple, boiling down, essentially, to the wedge product.

Moreover, the language of differential forms provides a natural framework to extend the notion of smooth valuations to smooth manifolds \cite{alesker_val_man2,alesker_val_man4,alesker_intgeo,alesker_val_man3}. The filtered space of smooth valuations on a manifold $X$ is denoted by $\calV^\infty(X)$. Basic examples of smooth valuations are the Euler characteristic $\chi$ or any smooth measure on $X$. 

Let $\pi:\p_X\to X$ denote the cosphere bundle. By definition, each $\phi\in\calV^\infty(X)$ is represented by a pair $(\mu_\phi,\omega_\phi)\in \mathcal M^\infty(X)\times \Omega_{\ori}^{n-1}(\p_X)$,
 	where   $\mathcal M^\infty(X)$ are the smooth measures on $X$, and $\Omega_{\ori}(\p_X)$ denotes the space of complex-valued differential forms on $\p_X$ twisted by the pull-back by $\pi$ of the orientation line bundle of $X$. Alternatively, we can represent $\phi$ by a pair $(c_\phi,\tau_\phi)\in C^\infty(X)\times \Omega_{\ori}^n(\p_X)$.  We write $\phi=[[\mu_\phi,\omega_\phi]]=\{c_\phi,\tau_\phi\}$. Precise definitions and more details are given in Section \ref{ss:smooth}.
 	
While the Alesker product exists on any manifold $X$, in order to define the convolution, one needs, loosely speaking, a replacement for the Minkowski sum. In this regard, Alesker and the first named author \cite{alesker_bernig_convolution} proved that the formula \eqref{eq:m*convolution} defines an associative product on $\calV^\infty(X)$, called the convolution, if $X=G$ is a compact Lie group with multiplication $m:G\times G\to G$.
	
Due to the aforementioned importance of the translation-invariant case, it is a natural question whether the Alesker--Bernig convolution of valuations on compact Lie groups admits an analogous simple description in terms of differential forms. Our first main result answers this question affirmatively by establishing such a formula for invariant valuations.

Let $G$ be a compact Lie group with Lie algebra $\g$. Let $\calV^\infty(G)^{L_G}$ and $\calV^\infty(G)^{G\times G}$ denote the subspaces of  left- and bi-invariant valuations, respectively.  Similar notation will be used for spaces of differential forms. We have a natural identification
\begin{align*}
\Omega^p_{ \ori}(\p_G)^{L_G}\cong\bigoplus_k\Omega^k(\p_+(\g^*),\largewedge^{p-k} \g^*  \otimes \ori(\g)),\quad\tau\mapsto\sum_k\tau_k.
\end{align*}
Using the Haar probability measure on $G$, we further identify
\begin{align}
\label{eq:Haar_identification}
\largewedge^{p-k}\mathfrak g^*  \otimes \ori(\g)\cong \largewedge^{n-p+k} \mathfrak g \otimes \largewedge^n \mathfrak g^* { \otimes \ori(\g)} \cong \largewedge^{n-p+k}\mathfrak g,
\end{align}
and, given a form $\tau_k$ with values in $\largewedge^{p-k}\g^* \otimes \ori(\g)$,  write $\tilde\tau_k$ for the corresponding form with values in $\largewedge^{n-p+k}\g$. Let $e_1,\dots,e_n$ be a basis of $\g$, $e_1^*,\ldots,e_n^* \in \g^*$ the dual basis, and let $e_1^\sharp,\dots,e_n^\sharp$ denote the induced vector fields on $\p_+(\g^*)$. For $\tilde\tau,\tilde\zeta\in\Omega(\p_+(\mathfrak g^*),\largewedge^\bullet\mathfrak g)$ and $r\geq0$ we set
		\begin{align*}
			\hat S_r(\tilde \tau \otimes \tilde \zeta)= \sum_{|K|=r} \iota_{e_K^\sharp} \tilde\tau \wedge \iota_{e_K^*} \tilde\zeta,
		\end{align*}
where the sum runs over all sets $K=\{k_1,\dots,k_r\}\subset\{1,\dots,n\}$ and we define $\iota_{e_K^\sharp}=\iota_{e^\sharp_{k_1}}\circ\cdots\circ\iota_{e^\sharp_{i_r}}$ and similarly for $\iota_{e^*_K}$.

We define the convolution product $\tau * \zeta \in \Omega^{p+q-n}_{\ori}(\p_G)^{L_G}$ of differential forms $\tau \in \Omega^p_{\ori}(\p_G)^{G \times G}$ and $\zeta \in \Omega^q_{\ori}(\p_G)^{L_G} $ by 
	\begin{align}
	\label{eq:def_convolution_forms}
		\widetilde{(\tau * \zeta)}_j=\sum_{k+l \geq j} \epsilon^{p,q}_{k,l,j} \hat S_{k+l-j}(\tilde \tau_k \otimes \tilde \zeta_l),
	\end{align}
where 
\begin{align*}
	\epsilon_{k,l,j}^{p,q}=(-1)^{(n+q)(n+p+l+j)+k(l+j+1)}.
\end{align*}
 Observe that the lowest term in \eqref{eq:def_convolution_forms} is (up to a sign) the wedge product of $\tilde \tau_k$ and $\tilde \zeta_l$. We may thus think of this definition as a deformation of the usual wedge product.  We prove in Section \ref{s:conv_forms} that $\tau*\zeta\in\Omega_{\ori}(\p_G)^{G \times G}$ if $\zeta$ is bi-invariant, and that the convolution is associative.

\begin{MainTheorem}\label{mthm:compact}
Let $G$ be a compact Lie group. Let $\phi=\{c_\phi,\tau_\phi\} \in \mathcal V^\infty(G)^{G \times G}$ and $\psi=\{c_\psi,\tau_\psi\}=[[\mu_\psi,\omega_\psi]] \in \mathcal V^\infty(G)^{L_G}$ with $\omega_\psi$ left-invariant and $d\omega_\psi$ vertical. Denote $\mu(\psi)=\int_G \mu_\psi=\psi(G)\in \C$. Then 
\begin{align} 
\label{eq_main_formula_convolution}
	\phi * \psi=\{c_\phi \mu(\psi)+\pi_*(\tau_\phi * \omega_\psi), \tau_\phi * \tau_\psi\}.
\end{align}

\end{MainTheorem}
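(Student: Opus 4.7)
The plan is to unwind the definition $\phi*\psi=m_*(\phi\boxtimes\psi)$ of the Alesker--Bernig convolution at the level of the defining pairs of differential forms on the cosphere bundles, and then to reduce to a computation on $\p_+(\g^*)$ by exploiting invariance. I would first write out $\phi\boxtimes\psi\in\calV^\infty(G\times G)$ in terms of the pairs representing $\phi$ and $\psi$ via the product formula for smooth valuations. Since $\phi$ is bi-invariant and $\psi$ is left-invariant, $\phi\boxtimes\psi$ is invariant under the diagonal left action of $G$ on $G\times G$, and $m:G\times G\to G$ intertwines this with the left action on $G$; consequently $\phi*\psi$ is left-invariant on $G$, and it suffices to evaluate its defining pair at the identity, reducing everything to linear algebra on $\g$ and differential forms on $\p_+(\g^*)$.

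Next I would apply the general push-forward formula for smooth valuations under a submersion (developed by Alesker and Alesker--Bernig) to the map $m$. The fiber $m^{-1}(e)=\{(g,g^{-1}):g\in G\}$ is diffeomorphic to $G$, and after trivializing the cosphere bundles by left translation, fiber integration breaks into contractions by the basis vector fields $e_i^\sharp$ on $\p_+(\g^*)$ paired against contractions by the dual covectors $e_i^*$ acting on the $\largewedge\g^*$-valued coefficient. Summing over subsets $K$ of size $r=k+l-j$ produces precisely the operator $\hat S_r$, and the resulting bi-degree decomposition matches the splitting $\Omega^p_{\ori}(\p_G)^{L_G}\cong\bigoplus_k \Omega^k(\p_+(\g^*),\largewedge^{p-k}\g^*\otimes \ori(\g))$ used to define convolution of forms in \eqref{eq:def_convolution_forms}.

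The remaining task is to identify the resulting expression with \eqref{eq:def_convolution_forms} and \eqref{eq_main_formula_convolution}, pinning down the signs $\epsilon_{k,l,j}^{p,q}$ through careful bookkeeping of degrees, orientations, and the Haar identification \eqref{eq:Haar_identification}. The hypothesis that $\omega_\psi$ is left-invariant is what ensures that $\tau_\phi*\omega_\psi$ is well defined on $\p_G$ and that the pushforward of $\phi\boxtimes\psi$ has smooth density component $c_\phi\mu(\psi)+\pi_*(\tau_\phi*\omega_\psi)$ on $G$. The verticality of $d\omega_\psi$ is needed so that the Rumin-type operator governing the passage between the representations $[[\mu,\omega]]$ and $\{c,\tau\}$ does not produce additional correction terms, which is precisely what allows the formula to split cleanly into the two components written in \eqref{eq_main_formula_convolution}.

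The main obstacle will be the combinatorial bookkeeping: tracking signs from permuting the interior products, from the orientation twist $\ori(\g)$, and from \eqref{eq:Haar_identification}, so as to recover the precise coefficients $\epsilon^{p,q}_{k,l,j}$. Once this is carried out systematically---for instance by testing both sides on a basis of left-invariant forms of each bi-degree and comparing term by term---the formula \eqref{eq_main_formula_convolution} follows.
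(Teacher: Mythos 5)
Your plan follows the same route the paper takes: reduce to the Alesker--Bernig formula $\phi*\psi=m_*(\phi\boxtimes\psi)$, pass to the form level, use invariance to localize over the identity, and carry out the fiber integration over $\{(g,g^{-1})\}\cong G$, which produces the interior products $\iota_{e_K^\sharp}$, $\iota_{e_K^*}$ and hence the operators $\hat S_r$. The paper carries this out in two steps (Propositions~\ref{prop_forms_convolution_non_inv} and~\ref{prop_convolution_inv_forms}) that correspond closely to what you describe.

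There are, however, two points at which your outline glosses over genuine technical obstacles that the paper has to address explicitly. First, the exterior product $\phi\boxtimes\psi$ is not a smooth valuation on $G\times G$ --- it is merely a generalized valuation --- so one cannot simply ``write it out in terms of the pairs'' of defining forms. The machinery one uses instead is the oriented blow-up $F:\hat{\p}\to\p_{G\times G}$ along the set $\mathcal M$ of ``pure'' covectors and the map $r=\Phi\circ F^{-1}\circ dm^*$ from \cite{alesker_bernig_convolution}, which the paper recalls and exploits. This is not a detail: without it, the pushforward cannot be expressed in terms of smooth forms at all. Second, the cited result gives directly only the identity $a^*\tau_{\phi*\psi}=p_*r^*(p_1^*a^*\tau_\phi\wedge p_2^*a^*\tau_\psi)$, i.e.\ a formula for the $n$-form component $\tau_{\phi*\psi}$. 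Since a pair $(\mu,\omega)$ determines the valuation $[[\mu,\omega]]$ only up to a multiple of $\chi$, passing from the $\tau$-formula to the formula for the $c$-component in \eqref{eq_main_formula_convolution} is not a direct read-off. The paper resolves this by a support argument (comparing supports to conclude that the $\chi$-ambiguity vanishes), together with the case distinction $\mu_\psi=0$ and $\omega_\psi=0$ in the proof of Proposition~\ref{prop_forms_convolution_non_inv}. Your outline needs some version of this argument; simply citing ``the push-forward formula'' will not by itself produce the term $c_\phi\,\mu(\psi)+\pi_*(\tau_\phi*\omega_\psi)$.

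Once these two ingredients are supplied, your plan for the explicit fiber computation --- parametrizing the fiber over $(e,[\xi])$ by $G$, lifting tangent vectors, splitting into horizontal and vertical parts, and contracting --- is exactly what appears in Proposition~\ref{prop_convolution_inv_forms}, and the remaining work is indeed just the sign bookkeeping you flag.
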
	
We remark that vertical refers here to the canonical contact structure on $\p_G$ and that such $\omega_\psi$ always exists,  see Sections \ref{ss:notation} and \ref{ss:smooth}, respectively.
Formula \eqref{eq_main_formula_convolution} can in fact be extended to the case when also $\phi$ is only left-invariant. Indeed, it is easily seen that in such a case one has
\begin{align*}
	\phi * \psi=\int_G \mathbf{Ad}_g^*\phi dg \ * \psi,
\end{align*}
where $\mathbf{Ad}_g^*$ is the pull-back under the conjugation by $g\in G$, see Section \ref{s:compact}.

Equation \eqref{eq_main_formula_convolution} suggests that one might be able to define the convolution of bi-invariant valuations for non-compact Lie groups. We will see that this is indeed the case and that the convolution can be defined in such generality that it unifies the Alesker--Bernig convolution on compact Lie  groups with the Bernig--Fu convolution on linear spaces.

To this end, we first study which Lie groups have an interesting space of bi-invariant smooth valuations. On any Lie group, the Euler characteristic is a bi-invariant valuation. Furthermore, many Lie groups, such as semi-simple or nilpotent groups, are unimodular, i.e., possess a bi-invariant Haar measure. Our second main result classifies the connected Lie groups that admit smooth bi-invariant valuations beyond those two examples.
Henceforth, smooth valuations $\phi\notin\Span\{\chi, \vol\}$ will be called \emph{non-obvious}. 
 \begin{MainTheorem} \label{mainthm_existence_biinvariant}
 	A  connected Lie group $G$ of dimension at least 2 admits a  smooth non-obvious bi-invariant valuation if and only if $G$ admits a bi-invariant riemannian metric. In this case, the space $\mathcal V^\infty(G)^{G \times G}$ is infinite-dimensional except for $G=\mathrm{SO}(3)$ and $G=S^3$.  	
 \end{MainTheorem}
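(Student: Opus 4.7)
The proof naturally splits into two independent parts: the characterization in the first sentence and the dimension count in the second. I would begin by observing that left-invariant forms on the cosphere bundle $\p_G$ identify with forms on $\p_+(\g^*)$ with values in $\largewedge^\bullet \g^*$, and that bi-invariance amounts to equivariance under the diagonal $\mathbf{Ad}$-action of $G$. The measure component of a bi-invariant valuation must be a scalar multiple of the Haar measure (and hence exists only when $G$ is unimodular), so $\calV^\infty(G)^{G\times G}$ is essentially controlled by the space of $\mathbf{Ad}$-invariant sections on $\p_+(\g^*)$ with values in $\largewedge^\bullet \g^*$.

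For the characterization, the backward implication is straightforward: a bi-invariant metric on $G$ forces by a classical result that $G\cong K\times \R^n$ with $K$ compact, and the intrinsic volumes associated to the metric are non-obvious bi-invariant smooth valuations in intermediate degrees. The forward implication is the hard part. Starting from a non-obvious bi-invariant $\phi$, one extracts a non-trivial $\mathbf{Ad}$-invariant form on $\p_+(\g^*)$ with values in $\largewedge^\bullet \g^*$. The plan is to deduce that $\mathbf{Ad}(G)\subset GL(\g)$ has relatively compact image, which is equivalent to the existence of a bi-invariant Riemannian metric. I would argue by contraposition: if $\mathbf{Ad}(G)$ is not relatively compact, some $X\in\g$ has $\ad(X)$ with an eigenvalue of non-zero real part; the induced flow $e^{t\ad(X)^*}$ on $\p_+(\g^*)$ is hyperbolic, with attracting and repelling fixed sets given by projectivizations of generalized eigenspaces for eigenvalues of extremal modulus. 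A pointwise analysis of $\mathbf{Ad}$-invariant smooth sections near these sets should force any such section to be trivial, i.e.\ to correspond only to $\chi$ or $\vol$. This hyperbolic dynamics step is the main technical obstacle.

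For the dimension count, assume $G\cong K\times \R^n$ with $\dim G\geq 2$, and use a Künneth-type decomposition to reduce to $\R^n$ and $K$ separately. By Alesker's irreducibility theorem the translation-invariant smooth valuations on $\R^n$ are infinite-dimensional for $n\geq 2$, immediately handling all cases with $n\geq 2$. For compact $K$ of dimension $\geq 2$, the space $\calV^\infty(K)^{K\times K}$ is infinite-dimensional as soon as the $\mathbf{Ad}$-orbits in $\p_+(\mathfrak{k}^*)$ have positive codimension, since then even the $\mathbf{Ad}$-invariant smooth functions on the sphere span an infinite-dimensional space. The only connected compact Lie groups of dimension $\geq 2$ whose adjoint action on $\p_+(\mathfrak{k}^*)$ is transitive are the rank-one simple groups $SO(3)$ and $S^3\cong SU(2)$; for those, the invariant sections reduce to $SO(2)$-invariants in a finite-dimensional fibre of a homogeneous bundle, giving a finite-dimensional space of bi-invariant valuations. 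Combining these observations yields the claimed dichotomy.
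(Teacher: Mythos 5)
Your high-level skeleton matches the paper: split into an existence characterization and a dimension count, identify bi-invariant valuations with $\Ad_G$-equivariant data on $\p_+(\g^*)$, use the intrinsic volumes of a bi-invariant metric for the backward implication, and invoke Milnor's $G\cong K\times\R^m$. But the key step in your forward direction has a genuine gap, and a couple of other steps are only sketched.

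The claim that ``if $\Ad(G)$ is not relatively compact, some $X\in\g$ has $\ad(X)$ with an eigenvalue of non-zero real part'' is false. Take the Heisenberg group: every $\ad(X)$ is nilpotent, so all eigenvalues vanish, yet $\Ad(G)$ is an unbounded group of unipotent matrices. The same failure occurs for any non-abelian nilpotent group, and more generally whenever the unboundedness of $\Ad(G)$ comes from parabolic rather than hyperbolic dynamics. So the ``attracting/repelling fixed set'' analysis you propose never gets off the ground in exactly the cases where there is no bi-invariant metric but the adjoint action is unipotent, and these are precisely the cases you must rule out. Moreover, even where hyperbolicity does hold, you flag the pointwise rigidity argument as ``the main technical obstacle'' and do not supply it. The paper takes a quite different route: it passes through the Alesker--Fourier transform to exhibit the principal symbol as an $\Ad_G$-invariant section of $\xi^{\otimes j}\otimes\Sym^2(\wedge^j\xi^\perp)$ over $\p_+(\g)$, then uses three orbit lemmas based on convexity rather than spectrum --- a John-ellipsoid argument showing generic orbits are unbounded whenever there is no invariant inner product, and a quotient-by-a-maximal-proper-ideal trick (using the Killing form if the quotient is simple, or the invariance of the projection if it is abelian) to keep generic orbits away from $0$. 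This handles the nilpotent/parabolic case uniformly, with no spectral hypothesis.

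For the dimension count, two further points. First, a ``K\"unneth-type decomposition'' of $\calV^\infty(K\times\R^m)^{G\times G}$ into pieces for $K$ and $\R^m$ separately is not something you can just cite: bi-invariant valuations on a product do not factor as tensor products, and the exterior product of valuations lands a priori in generalized valuations. The paper instead covers $K\times\R^m\to K\times T^m$ by a lattice and observes that pull-back is an isomorphism on bi-invariant valuations, reducing to the compact case. Second, the claims that invariant functions on $\p_+(\mathfrak k^*)$ inject into degree-one invariant valuations, that transitivity of the adjoint action on the sphere singles out exactly $\mathrm{SO}(3)$ and $S^3$, and that in the transitive case the space is finite-dimensional, are all stated but not proved; the paper establishes these via a cited result on Crofton formulas, the Montgomery--Samelson--Borel classification of transitive sphere actions, and the classification of compact isotropic manifolds of dimension four. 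These are substantive steps, not routine checks.
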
	

 In particular, such groups are unimodular. For an equivalent explicit description, recall that by a result of Milnor \cite{milnor76}, a connected Lie group $G$ admits a bi-invariant riemannian metric if and only if $G$ is the cartesian product of a compact group and a linear space. 

In the two exceptional cases $G=S^3, \,\mathrm{SO}(3)$, the space of bi-invariant valuations coincides with the space of intrinsic volumes for the bi-invariant round metric, see for example  \cite{bernig_faifman_solanes_part2}. We compute the full convolution table for $S^3$ in Section \ref{s:S3}.

We now turn to defining the convolution of smooth bi-invariant valuations on general unimodular groups. As there is no canonical choice of a Haar measure to play the role of unit element, it is natural to introduce a twist by the one-dimensional space $\Dens(\g^*)$ of dual densities on the Lie algebra $\g$ or, equivalently, the dual to the space of Haar measures on $G$.
	
It is not hard to see that the Haar measures on $G$ naturally form a direct summand in $\mathcal V^\infty(G)^{L_G}$, see Proposition \ref{prop:haar_direct}. We denote the Haar measure component by $\mu(\bullet)$ and retain the same notation also for spaces of valuations twisted by dual densities, so that for $\phi\in\calV^\infty(G)^{L_G}\otimes \Dens(\mathfrak g^*)$, we have $\mu(\phi)\in \mathbb C$, as in Theorem \ref{mthm:compact}.  Further, replacing \eqref{eq:Haar_identification} by
	\begin{displaymath}
		\largewedge^{p-k}\mathfrak g^* {\otimes \ori(\g)} \otimes \Dens(\mathfrak g^*) \cong \largewedge^{n-p+k}\mathfrak g,
	\end{displaymath} 
	$\tilde\tau_k$ will denote the form with values in $\largewedge^{n-p+k}\g$ corresponding to a form $\tau_k\in\Omega^k_{\ori}(\p_+(\g^*),\largewedge^{p-k}\g^*)\otimes \Dens(\mathfrak g^*)$. Formula \eqref{eq:def_convolution_forms} then defines an associative convolution product on $\Omega_{\ori}(\p_G)^{G \times G}\otimes \Dens(\mathfrak g^*)$, and turns the space $\Omega_{\ori}(\p_G)^{L_G}$ into a left module over the  algebra $\Omega_{\ori}(\p_G)^{G \times G}\otimes \Dens(\mathfrak g^*)$.

\begin{Definition}\label{mdef:convolution}
	Let $G$ be a unimodular Lie group. The convolution of valuations $\phi\in \mathcal V^\infty(G)^{G \times G}\otimes \Dens(\mathfrak g^*)$ and $\psi\in \mathcal V^\infty(G)^{L_G}$ is defined as
	\begin{equation} 
		\label{eq_def_formula_convolution}
		\phi * \psi=\{c_\phi \mu(\psi)+\pi_*(\tau_\phi * \omega_\psi), \tau_\phi * \tau_\psi\},
	\end{equation}
where $\phi=\{c_\phi,\tau_\phi\}$ and $\psi=\{c_\psi,\tau_\psi\}=[[\mu_\psi,\omega_\psi]]$ with $\omega_\psi$ left-invariant and $d\omega_\psi$ vertical.
	\end{Definition}	
The validity of Definition \ref{mdef:convolution} is not evident; namely, one has to verify that a form $\omega_\psi$ with the required properties exists, the definition does not depend on its choice, and that the right-hand side of \eqref{eq_def_formula_convolution} indeed corresponds to a valuation. Doing this, together with proving the main properties of the so-defined convolution product, is our third main result.

\begin{MainTheorem}
	\label{mthm:unimodular}
	Let $G$  be a unimodular Lie group. Then the convolution of Definition \ref{mdef:convolution} is well defined and continuous, gives rise to an associative product on $\mathcal V^\infty(G)^{G \times G}\otimes \Dens(\mathfrak g^*)$, and turns the space $\mathcal V^\infty(G)^{L_G}$ into a module over the convolution algebra $\mathcal V^\infty(G)^{G \times G}\otimes \Dens(\mathfrak g^*)$. Moreover, it satisfies the following properties:
	\begin{enumerate}
		\item $\vol\otimes\vol^*\in \mathcal V^\infty(G)^{G \times G}\otimes \Dens(\mathfrak g^*)$ is the unit element.
		\item The convolution is compatible with the canonical filtration $\calV^\infty(G)=\calW_0\supset\calW_1\supset\cdots\supset\calW_n=\mathcal M^\infty(G)$ in the sense that
		\begin{align*}
			\left(\mathcal W_k^{G \times G}	\otimes \Dens(\mathfrak g^*)\right) * \mathcal W_l^{L_G}	 \subset \mathcal W_{k+l-n}^{L_G}.
		\end{align*} 
		\item For any $\phi\in\mathcal V^\infty(G)^{G \times G} \otimes \Dens(\mathfrak g^*)$ and $\psi\in\mathcal V^\infty(G)^{L_G}$ it holds that $(\chi \otimes \vol^*)* \psi=(\mu(\psi)\vol^*) \chi$ and $\phi\ast\chi= \mu(\phi)\chi$. In particular, $\Span\{\chi\otimes\vol^*\} \subset \mathcal V^\infty(G)^{G \times G} \otimes \Dens(\mathfrak g^*)$ is a two-sided nilpotent ideal.
		\item \emph{Alesker--Poincar\'e duality.} If $G$ is connected, then the $\Dens(\mathfrak g^*)$-valued pairing on $\mathcal V^\infty(G)^{G\times G}\otimes \Dens(\mathfrak g^*)$, $(\phi,\psi)\mapsto(\phi*\psi)(e)$ is perfect. Moreover, one has $(\phi*\psi)(e) =\mu(\phi\cdot \inv^*\psi)$ where the dot denotes the Alesker product and $\inv:G \to G$ the inverse map. 
	\end{enumerate}
\end{MainTheorem}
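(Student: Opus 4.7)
The plan is to verify the statement in three stages: well-definedness of Definition~\ref{mdef:convolution}, the algebraic claims (associativity and the module property), and the four listed properties. The principal reduction I would use is Theorem~\ref{mainthm_existence_biinvariant}: a connected unimodular $G$ either satisfies $\mathcal V^\infty(G)^{G\times G}\otimes \Dens(\g^*) = \Span\{\chi\otimes \vol^*,\, \vol\otimes \vol^*\}$, so that the module structure is essentially forced by property~(iii), or is of the form $K\times V$ with $K$ compact and $V$ a real vector space, in which case one can combine Theorem~\ref{mthm:compact} with the classical Bernig--Fu convolution on $V$.

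For well-definedness, existence of a left-invariant representative $\omega_\psi$ with $d\omega_\psi$ vertical follows by first representing $\psi$ by a left-invariant form---left-invariant forms on $\p_G$ form a finite-dimensional subspace via evaluation at $e$, and every left-invariant valuation has such a representative---and then applying Rumin's canonical algebraic reduction to arrange the verticality of $d\omega_\psi$; since the reduction is algebraic, it preserves left-invariance. Independence of the choice amounts to verifying that if a left-invariant $\alpha$ with $d\alpha$ vertical represents the zero valuation (paired with an appropriate $\mu$), then the right-hand side of \eqref{eq_def_formula_convolution} is unchanged; here the bi-invariance of $\tau_\phi$ together with the wedge-plus-contraction shape of \eqref{eq:def_convolution_forms} makes the cancellation a direct calculation. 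That the right-hand side is in fact a smooth valuation reduces to a closedness condition on $\tau_\phi\ast\tau_\psi$, which I would check by a local computation on $\p_G$ essentially identical to the one carried out in the proof of Theorem~\ref{mthm:compact}.

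Continuity is inherited from the continuity of the wedge product and push-forward in \eqref{eq:def_convolution_forms}. For associativity and the module property I would argue by reduction to $G = K\times V$: bi-invariant and left-invariant valuations decompose as exterior products of their analogues on $K$ and $V$, and \eqref{eq_def_formula_convolution} is compatible with this decomposition in the sense that the convolution on $K\times V$ becomes the tensor product of the convolutions on the two factors; associativity then follows from Theorem~\ref{mthm:compact} on $K$ and from Bernig--Fu associativity on $V$. The disconnected case reduces to the identity component since left-invariant forms are determined by their values at $e$.

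Items (i)--(iv) are then verified directly. (i) For $\phi=\vol\otimes\vol^*$ we have $\tau_\phi=0$ and $c_\phi$ trivializes, so \eqref{eq_def_formula_convolution} collapses to $\psi$. (ii) Filtration compatibility is immediate from the bidegree count in \eqref{eq:def_convolution_forms}. (iii) A short computation using the standard representatives of $\chi$ and $\vol$. (iv) The Alesker--Poincar\'e duality is the most delicate point: for the identity $(\phi\ast\psi)(e) = \mu(\phi\cdot \inv^*\psi)$ I would compare the explicit convolution and Alesker product formulas at $e\in G$ and invoke the standard duality between them, and perfectness then follows, via the exterior product decomposition, from the known Alesker--Poincar\'e pairings on the compact and linear factors. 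I expect the main obstacle to be well-definedness in the non-compact case, specifically the independence from the choice of $\omega_\psi$, since the two representations $[[\mu,\omega]]$ and $\{c,\tau\}$ enter \eqref{eq_def_formula_convolution} simultaneously and the non-uniqueness must be controlled on both sides in parallel.
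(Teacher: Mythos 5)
Your overall strategy---reduce to $G=K\times V$, verify each item of the theorem directly from \eqref{eq_def_formula_convolution}---is reasonable in outline, but it hinges on a claim that is false: that bi-invariant (resp.\ left-invariant) smooth valuations on $K\times V$ decompose as exterior products of their analogues on $K$ and $V$, so that the convolution becomes a tensor product of the two known convolutions. Already for $K=\{e\}$, $V=\RR^2$, the bi-invariant valuations are $\Val^\infty(\RR^2)$, which is infinite-dimensional, while $\Val^\infty(\RR)\otimes\Val^\infty(\RR)$ is four-dimensional. The same failure persists for nontrivial $K$, since the associated graded space is $\Val^\infty(\mathfrak k\oplus V)^{\Ad K}$ and the adjoint action is trivial on the $V$-directions, so there are ``mixed'' degrees that are not tensor products. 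Because you also route associativity, the module property, continuity, and perfectness of the pairing in (iv) through this decomposition, the gap propagates through the core of the argument. It cannot be repaired by a density argument either, since you would first need continuity---which you are also deriving from the same decomposition---creating a circularity.

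The paper avoids all of this by a lattice--quotient reduction that you do not consider. For $G=K\times V$, fix a lattice $\Gamma\subset V$ and let $q_\Gamma:G\to G_\Gamma:=K\times(V/\Gamma)$ be the covering. The pull-back $q_\Gamma^*$ induces an isomorphism on bi-invariant valuations, and tensoring with the dual density $\sigma_\Gamma$ defined by $\Gamma$ yields an isomorphism $Q_\Gamma:\mathcal V^\infty(G_\Gamma)^{G_\Gamma\times G_\Gamma}\to \mathcal V^\infty(G)^{G\times G}\otimes\Dens(\mathfrak g^*)$. Since $G_\Gamma$ is \emph{compact}, the Alesker--Bernig convolution is already available there, and one transplants it via $Q_\Gamma$ (Lemma~\ref{lem:lattices}). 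Theorem~\ref{mthm:compact} (the form-level formula) then shows the resulting operation is independent of $\Gamma$, and Proposition~\ref{prop_convolution_formula} shows it coincides with the formula of Definition~\ref{mdef:convolution}; Corollary~\ref{cor:product_independent} deduces independence of the splitting $G=K\times V$. Associativity, the module property, and continuity are all \emph{inherited} from the compact case rather than reproved. The disconnected case is handled via Lemma~\ref{lemma_reduce_to_connected} (restriction to $G_0$ is injective with image the $\Ad_G$-invariants) and Proposition~\ref{prop:restrict_connected}, not merely by observing that left-invariant forms are determined at $e$.

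Two smaller points. First, you correctly flag the independence of the choice of $\omega_\psi$ as the hardest part, but your sketch (``a direct calculation'' exploiting bi-invariance of $\tau_\phi$) is not carried out, and this is precisely what the lattice trick is designed to sidestep: once the convolution is identified with the pull-back of the compact-group convolution, no direct verification is needed. Second, in (iv) the paper proves perfectness not via any tensor decomposition but by passing to the principal symbol in $\Val^\infty(\mathfrak g)^{\Ad K}$, invoking translation-invariant Alesker--Poincar\'e duality there, and lifting back via the surjectivity statement of Lemma~\ref{lem:translation_surjective}; you will want some version of that lifting in any repaired argument.
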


The proof of Theorem \ref{mthm:unimodular} we give in Section \ref{sec:convolution_unimodular} takes the following indirect approach. We first define an a-priori distinct convolution operation on groups of the form $G=K\times V$, with $K$ compact and $V$ linear, exploiting the fact that for $\Gamma\subset V$ a lattice, the quotient 
$G\to K\times V/\Gamma$ induces an isomorphism on the corresponding spaces of bi-invariant valuations. We then proceed to show that it coincides with the convolution of Definition \ref{mdef:convolution}. Finally, the classification established in Theorem \ref{mainthm_existence_biinvariant} allows us to complete the proof of Theorem \ref{mthm:unimodular} essentially by reduction to the compact case.

We note without a proof that it can be shown directly that  \eqref{eq_def_formula_convolution} gives rise to a well-defined convolution on $\mathcal V^\infty(G)^{G \times G}	\otimes \Dens(\mathfrak g^*)$ for any unimodular group $G$, without relying on Theorem \ref{mainthm_existence_biinvariant}. 
The advantage of such an a-priori approach is that, under some further conditions such as compatibility of wavefront sets, it applies also to non-smooth bi-invariant valuations, which exist for a larger class of Lie groups. For instance, on semi-simple Lie groups one has a bi-invariant pseudo-riemannian structure, with corresponding intrinsic volumes which are generalized valuations \cite{bernig_faifman_solanes_pseudo}. In the smooth case, however, this task is significantly simplified by making use of Theorem \ref{mainthm_existence_biinvariant}.

Finally, we establish the compatibility of Definition \ref{mdef:convolution} with the previously defined convolutions. Besides proving that it extends the Alesker--Bernig convolution on compact Lie groups and the Bernig--Fu convolution on flat spaces, we also show that the infinitesimal version of our convolution agrees with the Bernig--Fu convolution of $\Ad_G$-invariant valuations on the corresponding Lie algebra and that a certain natural averaging map is a homomorphism of convolution algebras.

Observe that a Lie group $G$ that is isomorphic to the cartesian product of a compact group and a linear space,  has a unique maximal compact subgroup $K\subset G$. We then define $A:\mathcal V^\infty_c(G)^{K\times K}\to \mathcal V^\infty(G)^{G\times G}\otimes\Dens(\mathfrak g^*)$ by  $A(\phi)=\int_G L_g^*\phi d\sigma(g) \otimes \sigma^*$ for an arbitrary Haar measure $\sigma\in\Dens(\mathfrak g)$. Here $\calV^\infty_c$ denotes the subspace of compactly supported valuations, $L_g^*$ the pull-back under the left translation by $g\in G$, and $\sigma^*$ the corresponding dual density.

 \begin{Proposition}
 \label{mthm:compatibility}
Let $G$ be a unimodular Lie group. Consider the convolution on $\mathcal V^\infty(G)^{G \times G} \otimes \Dens(\mathfrak g^*)$ of Definition \ref{mdef:convolution}.
 		\begin{enumerate}
 		\item  If $G$ is compact, then the map $\mathcal V^\infty(G)^{G \times G}\to \mathcal V^\infty(G)^{G \times G} \otimes \Dens(\mathfrak g^*)$ given by $\phi \mapsto \phi \otimes \vol(G) \vol^*$ is an isomorphism of algebras.    
 		\item \label{item_flat_case} If $G=V$ is a vector space, then the convolution on
 		\begin{align*}
 		\calV^\infty(V)^{V\times V}\otimes\Dens(V^*)=\Val^\infty(V)\otimes\Dens(V^*)
 		\end{align*}
 		agrees with the Bernig--Fu convolution. 
  \item There exists an isomorphism of graded algebras  
    \begin{align*}
    	\gr  \mathcal V^\infty(G)^{G \times G} \otimes \Dens(\mathfrak g^*) \cong \Val^\infty(\mathfrak g)^{\Ad G} \otimes \Dens(\mathfrak g^*).
    \end{align*}
    \item   Let $G$  be isomorphic to the product of a compact Lie group and a linear space, and let $K\subset G$ be the maximal compact subgroup. Then for all $\phi,\psi\in\mathcal V^\infty_c(G)^{K\times K}$ and $\eta\in\mathcal V^\infty(G)^{G\times G}\otimes \Dens(\mathfrak g^*)$ it holds that $$A(\phi\ast\psi)=A\phi\ast\psi=\phi * A\psi=A\phi\ast A\psi$$ and $\phi\ast\eta=A\phi\ast \eta$.
\end{enumerate} 
\end{Proposition}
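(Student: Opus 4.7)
My plan is to treat each item by reducing Definition \ref{mdef:convolution} to a more familiar setting and exploiting the equivariance of $m_*$ under group translations.

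Items (i) and (ii) should be essentially formal. For (i), the map $\phi\mapsto\phi\otimes\vol(G)\vol^*$ is a linear isomorphism because $\Dens(\mathfrak g^*)$ is one-dimensional, and the scalar factor $\vol(G)$ is exactly what is needed so that the $\Dens(\mathfrak g^*)$-twisted identification in Definition \ref{mdef:convolution} matches \eqref{eq:Haar_identification} (which uses the Haar probability measure); with this matching, the formulas \eqref{eq_main_formula_convolution} and \eqref{eq_def_formula_convolution} agree term by term. For (ii), the Lie algebra of $V$ is abelian, so the coadjoint action on $V^*$ is trivial and all induced vector fields $e_i^\sharp$ on $\p_+(V^*)$ vanish identically. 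Consequently $\hat S_r(\tilde\tau\otimes\tilde\zeta)=0$ for every $r\geq 1$, and formula \eqref{eq:def_convolution_forms} collapses to
\begin{align*}
\widetilde{(\tau*\zeta)}_j=\sum_{k+l=j}\epsilon^{p,q}_{k,l,j}\,\tilde\tau_k\wedge\tilde\zeta_l,
\end{align*}
which, modulo a routine sign check, is the cosphere-bundle formula for the Bernig--Fu convolution from \cite{bernig_fu06}.

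For item (iii), I would invoke Alesker's identification of the associated graded $\gr\calV^\infty(X)$ with the space of smooth sections of the bundle $\Val^\infty(TX)$ of translation-invariant smooth valuations on tangent spaces. For $X=G$ a connected Lie group, trivializing $TG$ by left translations and imposing bi-invariance reduces such a section to a constant with value in $\Val^\infty(\mathfrak g)^{\Ad G}$, giving the vector-space isomorphism after tensoring with $\Dens(\mathfrak g^*)$. The algebra structures match because on the associated graded the convolution becomes infinitesimal: near the identity of $G$, the multiplication $m$ is approximated to first order by Lie-algebra addition on $\mathfrak g$, and higher-order corrections vanish under $\gr$. Combined with item (ii), this identifies the graded convolution with the Bernig--Fu convolution on $\Val^\infty(\mathfrak g)^{\Ad G}\otimes\Dens(\mathfrak g^*)$. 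This is the most delicate step; verifying compatibility with the filtration and the infinitesimal behaviour of $m_*$ is where I expect the main work to lie.

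For item (iv), the main tool is the equivariance of convolution under one-sided translations: $L_g^*(\alpha*\beta)=(L_g^*\alpha)*\beta$ and $R_g^*(\alpha*\beta)=\alpha*(R_g^*\beta)$, which follow from the respective intertwinings $m\circ(L_g\times\id)=L_g\circ m$ and $m\circ(\id\times R_g)=R_g\circ m$. Integrating the first identity against the Haar measure on $G$ gives $A(\phi*\psi)=A\phi*\psi$. The identity $A(\phi*\psi)=\phi*A\psi$ follows from the second identity together with the fact that $G\cong K\times V$ with $V$ abelian and central, so left and right translations by $V$ agree and the $K$-invariance of $\phi$ makes the remaining $K$-averages trivial. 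The remaining equalities, as well as $\phi*\eta=A\phi*\eta$, follow because $A\phi$, $A\psi$, and $\eta$ are fixed by the relevant one-sided averaging operators.
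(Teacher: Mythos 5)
Items (i) and (ii) follow the paper's approach closely and are essentially correct: for (i), both you and the paper observe that $\vol(G)\vol^*$ exactly compensates for the difference between the identification via the Haar probability measure in \eqref{eq:Haar_identification} and the $\Dens(\mathfrak g^*)$-twisted one; for (ii), both note that commutativity of $V$ kills all vector fields $e_i^\sharp$ and hence all higher $\hat S_r$ terms. The paper does a nontrivial sign verification comparing its conventions with those of \cite{bernig_fu06} (the displayed ``Remark'' before the proof), which you flag but do not carry out.

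For item (iii), your framework (Alesker's identification of $\gr\mathcal V^\infty$ with sections of $\Val^\infty(TX)$, restriction to bi-invariant sections to land in $\Val^\infty(\mathfrak g)^{\Ad G}$) is the same as the paper's. But the statement that ``the algebra structures match because on the associated graded the convolution becomes infinitesimal'' is a heuristic, not an argument, and you acknowledge as much. The paper carries this out concretely: it writes down the isomorphism $\tilde I$ in terms of restricting the lowest-degree component of $\tau$ and $\omega$ to the fiber over $e$, identifies the lowest-degree term of $\tau_\phi*\tau_\psi$ as $(\widetilde{\tau_\phi})_{n-k}\wedge(\widetilde{\tau_\psi})_{n-l}$ by computing $\epsilon^{n,n}_{n-k,n-l,2n-k-l}=1$ (and the analogous sign for $\tau_\phi*\omega_\psi$), and matches this with the Bernig--Fu formulas from \cite{bernig_fu06} using the sign dictionary from the Remark. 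Without such a computation, your claim is unsupported.

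Item (iv) has a genuine gap. The equalities $A(\phi*\psi)=A\phi*\psi=\phi*A\psi$ do follow from the one-sided equivariance of $m_*$ together with centrality of $V$, as you say and as the paper states. But the equality $A\phi*\psi=A\phi*A\psi$ is not a consequence of ``$A\psi$ being fixed by the relevant averaging operator'': $A\phi*\psi$ is an $m_*$-convolution (well-defined since $\psi$ is compactly supported), whereas $A\phi*A\psi$ is the convolution of Definition~\ref{mdef:convolution} --- these are a priori different constructions, and their compatibility is precisely what needs to be proved. The paper handles this by factoring $A=Q\circ A_T\circ A_\Gamma$ through a lattice quotient $G\to K\times V/\Gamma$, reducing to the compact case (where the two convolutions coincide by Theorem~\ref{mthm:compact}), and handling the $\Gamma$-averaging by a partition-of-unity reduction to the case $\spt\psi$ in a fundamental domain. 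Your proposal does not address this identification at all, so this step is missing rather than merely sketched.
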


\subsection*{Acknowledgements.} We are grateful to Semyon Alesker for numerous illuminating conversations.  D.F. is indebted to Leonid Rybnikov for some helpful explanations on Lie algebras. A large part of this work was carried out during  D.F.'s term at Tel Aviv University,  A.B.'s research stay at IHES, and our visit at the R\'enyi institute and we wish to thank those institutions for their support and productive atmosphere.

\section{Preliminaries}

\subsection{Notation}

\label{ss:notation}
 
For a real vector space $V$, let $\p(V)$ be the projective space consisting of lines in $V$, and $\p_+(V)$ its double cover of oriented lines. Thus $\p_+(V)$ is the set of equivalence classes of vectors $v \in V \setminus \{0\}$ with respect to the equivalence relation $v \cong \lambda v, \lambda>0$. The equivalence class of $v$ will be denoted by $[v]$. For a smooth manifold $X$, we let $\p_X$ denote the cosphere bundle, consisting of all pairs $(x,[\xi])$ with $x \in X, \xi \in \p_+(T_x^*X)$. It is naturally a contact manifold, with the contact distribution given at the point $(x,[\xi])$ by $d\pi|_{(x,[\xi])}^{-1}(\ker \xi)$, where $\pi:\mathbb P_X\to X$ is the projection.

We will use the convention for the push-forward of differential forms as in \cite{alesker_bernig, alesker_bernig_convolution}. More precisely, if $\pi:M \to A$ is a smooth fiber bundle with compact fibers and $M,A$ oriented manifolds, then 
\begin{displaymath}
	\int_A \alpha \wedge \pi_*\omega=\int_M \pi^*\alpha \wedge \omega
\end{displaymath}
for $\alpha\in\Omega(A)$ compactly supported and $\omega\in\Omega(M)$. Note that the degree of $\pi_*\omega$ equals the degree of $\omega$ minus the degree of the fiber. With this convention, we have $d \circ \pi=\pi \circ d$ and the formula
\begin{displaymath}
	\pi_*(\pi^*\alpha \wedge \omega)=\alpha \wedge \pi_*\omega.
\end{displaymath}

To compute $\pi_*$ explicitly, assume $\pi_*\omega\in\Omega^k(M)$ and take a point $a \in A$ and tangent vectors $v_1,\ldots,v_k \in T_aA$. For a point in the fiber of $a$, take lifts $\tilde v_1,\ldots,\tilde v_k$. We then have 
\begin{displaymath}
	\pi_*\mu(v_1,\ldots,v_k)=\int_{\pi^{-1}(a)} \mu(\tilde v_1,\ldots,\tilde v_k,\bullet),   
\end{displaymath}
where the fiber is oriented in the usual way such that, locally, the orientation of $M$ is the product orientation corresponding to $A\times\pi^{-1}(a)$.

Note that all scalar-valued (differential) forms considered throughout the paper take values in $\CC$. Similarly, all tensor products are over $\CC$.

\subsection{Lie groups}

Let $G$ be an $n$-dimensional real Lie group  and $\mathfrak g$ its Lie algebra. Take $g,h\in G$. We use the  notation as in \cite{duistermaat_kolk} $L_hg=hg$, $ R_hg=gh$, and 
$\mathbf{Ad}_h(g)=hgh^{-1}$. We set \[L_h^*:=(dL_{h^{-1}})^*:T_g^*G \to T_{hg}^*G,\quad R_h^*:=(dR_{h^{-1}})^*:T_g^*G \to T_{gh}^*G,\] and \[\Ad_h^*:=(d\mathbf{Ad}_{h^{-1}})^*:T_g^*G \to T_{hgh^{-1}}^*G.\] Similarly, we denote the adjoint action $\Ad_h=d\mathbf{Ad}_h:\mathfrak g \to \mathfrak g$. We then have $\Ad_h^*=(\Ad_{h^{-1}})^*:\mathfrak g^* \to \mathfrak g^*$. For $x\in\g$ we set $\ad_x=\frac d{dt}\big|_{t=0}\Ad_{\exp(tx)}:\g\to\g$ and $\ad_x^*:=\frac d{dt}\big|_{t=0}\Ad_{\exp(tx)}^*=(-\ad_x)^*$, and let $x^\sharp$ be the vector field on  $\p_+(\mathfrak g^*)$ defined by 
	\begin{displaymath}
		x^\sharp|_{[\xi]}=\left.\frac{d}{dt}\right|_{t=0} [\Ad_{\exp(tx)}^* \xi]=[\ad^*_x(\xi)].
	\end{displaymath}

By $\ori(\g)$ we denote the line of orientations on $\g$. The connected components of $\ori(\g)\setminus\{0\}$ correspond to orientations of $\mathfrak g$, which can be identified with the left-invariant orientations on $G$ using left translation. Thus $\ori(\g)\simeq \ori(\g^*)\simeq \ori(\g)^*$ is a real 1-dimensional module over $G$, such that $g\in G$ acts by the scalar $\sgn \det (\Ad_g: \g\to \g)$.  There is a natural identification $\Dens(\mathfrak g)=\largewedge^n \g^*\otimes \mathrm{or}(\g)$. The group $G$ is called \emph{unimodular} if it has a bi-invariant Haar measure, or equivalently if $|\det \Ad_g|=1$ for all $g\in G$. It then holds that
\begin{align}
	\label{eq:trace}
	\mathrm{tr}(\ad_X)=0,\quad X\in \mathfrak g,
\end{align}
and the inverse map $\inv:G\to G$ preserves any given Haar measure. In particular, compact groups are unimodular.

Let $*:\largewedge^k \g\otimes \largewedge ^n\g^* \to \largewedge^{n-k}\mathfrak g^*$ be the Hodge star isomorphism given by
\begin{displaymath}
	\langle *(X\otimes\det),Y\rangle=\det(X \wedge Y), \quad Y \in \largewedge^{n-k} \mathfrak g.
\end{displaymath}
Using that $\largewedge ^n\g^*\otimes \largewedge ^n\g\cong \C$, we thus have the isomorphism
\begin{align}
\label{eq:Hodge-1}
*^{-1}:\largewedge^k\g^*\otimes \largewedge^n\g\to \largewedge^{n-k}\g,
\end{align}
which will be frequently used in the paper. Observe that if $\tau \in \largewedge^k \mathfrak g^* \otimes \largewedge^n \mathfrak g$ and $X_1,\ldots,X_k \in \mathfrak g$, then 
\begin{equation} \label{eq_star_explicit}
	\tau(X_1,\ldots,X_k)=*^{-1}\tau \wedge X_1 \wedge \ldots \wedge X_k.
\end{equation}
By naturality of the Hodge star we have
\begin{align}
\label{eq:AdHodge}
	\Ad_h^* \circ\, *=* \circ \Ad_h.
\end{align}
Differentiating, we infer that
\begin{align}
\label{eq:adHodge}
 \ad_x^* \circ *=* \circ \ad_x,\quad x\in\g.
\end{align}
Finally, the Koszul boundary operator $\partial:\wedge^k\mathfrak g\to \wedge^{k-1}\mathfrak g$ is defined by  
\begin{displaymath}
	\partial(x_1 \wedge \ldots \wedge x_k)=\sum_{1 \leq i< j \leq k} (-1)^{i+j+1} [x_i,x_j] \wedge x_1 \wedge \cdots \wedge \widecheck x_i \wedge \cdots \wedge \widecheck x_j \wedge \cdots \wedge x_k,
\end{displaymath}
 where $\widecheck x_i$ means that $x_i$ is omitted. It satisfies $\partial ^2=0$.

\subsection{Smooth valuations}

\label{ss:smooth}

We refer to \cite{alesker_val_man2, alesker_intgeo, alesker_barcelona, alesker_val_man3} for the theory of smooth valuations.  

Let $X$ be an $n$-dimensional smooth manifold and  $\p_X=\PP_+(T^*X)$ its cosphere bundle. Denote by $\pi:\mathbb P_X\to X$ the projection. Let $\calP(X)$ denote the class of compact differentiable polyhedra in $X$. Each $P\in\calP(X)$ admits a normal cycle $N(P)$ which is a naturally oriented Legendrian $(n-1)$-dimensional Lipschitz submanifold of $\p_X$, see \cite{alesker_val_man3} for details.

We denote by $\ori(X)$ the orientation line bundle on $X$. The line bundles underlying smooth measures and top differential forms on $X$ are related by $\Dens(TX)=\largewedge^nT^*X\otimes\ori(X)$. By $\Omega_{\ori}(X)$, resp. $\Omega_{\ori}(\mathbb P_X)$,  we denote the space of complex-valued differential forms on X, resp. on $\p_X$, twisted by $\ori(X)$, resp. by $\pi^*\ori(X)$. Note that $\Omega^n_{\ori}(X)=\mathcal M^\infty(X)$.

A functional $\phi:\calP(X)\to\CC$ is called a smooth valuation if there exist $\mu\in\Omega^n_{\ori}(X)$ and $\omega\in\Omega^{n-1}_{\ori}(\p_X)$ such that
\begin{align*}
\phi(P)=\int_P\mu+\int_{N(P)}\omega,\quad P\in\calP(X).
\end{align*}
In this case we denote $\phi=[[\mu,\omega]]$.% When $X$ is oriented, one may drop the twist by $\ori(X)$.

Clearly, any measure on $X$ is a smooth valuation. Another example is the Euler characteristic $\chi$ for which the corresponding forms were constructed by Chern \cite{chern45}.  If $X$ is endowed with a riemannian metric, then the intrinsic volumes $\mu_k,k=0,\ldots,n$, are smooth valuations on $X$. The Fr\'echet space of smooth valuations on $X$ is denoted by $\calV^\infty(X)$. It admits a canonical filtration by closed subspaces
\begin{displaymath}
	\calV^\infty(X)=\calW_0\supset\calW_1\supset\cdots\supset\calW_n= \Omega^n_{\ori}(X),
\end{displaymath}
such that the associated graded space $\gr(\calV^\infty(X))=\bigoplus_{k=0}^n\calW_k/\calW_{k+1}$ is canonically isomorphic to the space of smooth sections of the vector bundle
\begin{displaymath}
	\Val^\infty(TX)\to X
\end{displaymath}
whose fiber over $x\in X$ is the space $\Val^\infty(T_xX)$ of smooth and translation-invariant valuations, cf. Section \ref{ss:Val} below.

By definition, the map $\Omega^n_{\ori}(X) \times \Omega_{\ori}^{n-1}(\p_X) \to \mathcal V^\infty(X), (\mu,\omega) \mapsto [[\mu,\omega]]$ is surjective. Let us describe the kernel, following \cite{bernig_broecker07}. A differential form on $\p_X$ is called vertical if its restriction to the contact distribution vanishes. Rumin has constructed a differential operator $D:\Omega^{n-1}(\p_X) \to \Omega^n(\p_X)$ of order $2$ such that $D\omega=d(\omega+\xi)$, where $\xi$ is the unique vertical $(n-1)$-form such that $d(\omega+\xi)$ is vertical \cite{rumin94}. We then have $[[\mu,\omega]]=0$ if and only if $D\omega+\pi^*\mu=0$ and $\pi_*\omega=0$, where $\pi:\p_X\to X$ is the bundle projection. Any smooth valuation $[[\mu,\omega]]$ can thus be equivalently described by the pair $(\pi_*\omega, D\omega+\pi^*\mu) \in C^\infty(X)\times \Omega^n_{\ori}(\p_X)$. Note that $(f,\tau) \in  C^\infty(X)\times \Omega^n_{\ori}(\p_X)$ corresponds to a (unique) smooth valuation if and only if $\tau$ is vertical and exact, and $\pi_*\tau=df$. In this case we write $\{f,\tau\}$ for the valuation defined by this pair. With this notation, we have
\begin{align*}
	[[\mu,\omega]]=\{\pi_*\omega,D\omega+\pi^*\mu\}.
\end{align*}

\subsection{Operations on smooth valuations}
\label{ss:operations}

The space $\calV^\infty(X)$ carries a natural bilinear structure, the Alesker product, which turns it into a commutative associative filtered algebra with unit $\chi$, see \cite{alesker04_product, alesker_bernig, alesker_val_man3} for its construction. A fundamental property of the Alesker product is the following version of Poincar\'e duality. Let $\calV^\infty_c(X)\subset\calV^\infty(X)$ denote the ideal of compactly supported valuations, i.e., $\phi\in\calV_c^\infty(X)$ if it vanishes outside of a compact set $P_0$. In this case $\int_X\phi:=\phi(P)$ does not depend on $P\in\mathcal P(X)$ if $P\supset P_0$. Then the bilinear map 
\begin{displaymath}
\calV^\infty(X)\times \calV_c^\infty(X)\to\CC,\quad (\phi,\psi)\mapsto\int_X\phi\cdot\psi
\end{displaymath}
is a perfect pairing. Consequently, $\calV^\infty(X)$ embeds densely in the space 
\begin{equation} \label{eq_def_gen_vals}
	\calV^{-\infty}(X):=\left(\calV^\infty_c(X)\right)^*
\end{equation}
of generalized valuations.

Given two smooth manifolds $X,Y$, there exists a natural bilinear map $\boxtimes: \mathcal V^\infty(X) \times \mathcal V^\infty(Y) \to \mathcal V^{-\infty}(X \times Y)$, called exterior product. Note that the image is not a smooth valuation, but merely a generalized valuation in the sense of \eqref{eq_def_gen_vals}. For a smooth map $f:X \to Y$, there are further partially defined maps $f_* : \mathcal V^{-\infty}(X) \dashedrightarrow \mathcal V^{-\infty}(Y)$ and $f^*: \mathcal V^{-\infty}(Y) \dashedrightarrow \mathcal V^{-\infty}(X)$, called push-forward and pull-back, respectively, under $f$.  

The product of smooth valuations on a manifold $X$ is the pull-back of the exterior product under the diagonal map $X \hookrightarrow X \times X, x \mapsto (x,x)$, and it is again a smooth valuation on $X$. If a Lie group $G$ acts transitively on a manifold $X$, then the push-forward of the exterior product of a smooth valuation on $G$ and a smooth valuation on $X$ under the map $G \times X \to X$ is called the convolution, and it is again a smooth valuation on $X$. 

The Euler--Verdier involution $\sigma:\mathcal V^\infty(X) \to \mathcal V^\infty(X)$ is defined by
	\begin{displaymath}
		\sigma [[\mu,\omega]]:=(-1)^n [[\mu,a^*\omega]],
	\end{displaymath}
where $n=\dim X$ and $a:\p_X \to \p_X$ is the antipodal map. It is compatible with the filtration and the Alesker product and it extends to an involution on $\mathcal V^{-\infty}(X)$. The intrinsic volumes satisfy $\sigma \mu_k=(-1)^k\mu_k$.

\subsection{Translation-invariant valuations}
\label{ss:Val}

Let $X=V$ be a real vector space and let $\calK(V)$ denote the set of convex bodies, i.e., compact convex subsets of $V$. According to a deep result of Alesker, the subspace $\Val^\infty(V)\subset\calV^\infty(V)$ of translation-invariant valuations coincides with the dense subspace of $\mathrm{GL}(V)$-smooth vectors in the Banach space $\Val(V)$ of translation-invariant \emph{continuous} valuations on $\calK(V)$. In fact, by a recent result of Knoerr \cite{knoerr_finite},
\begin{displaymath}
  \Val^\infty(V)=\Span\{K\mapsto\vol(K+A)\mid A\in\calK^\infty_+(V)\},
\end{displaymath}
where $\vol\in\Dens(V)$ is a Lebesgue measure and $\calK^\infty_+(V)\subset\calK(V)$ consists of convex bodies with smooth boundary and positive curvature.
Observe that one has the so-called McMullen grading
\begin{displaymath}
\Val^\infty(V)=\bigoplus_{k=0}^n\Val^\infty_k(V),
\end{displaymath}
where $\Val_k^\infty (V)\subset\Val^\infty$ is the subspace of $k$-homogeneous valuations, i.e., $\phi\in\Val_k^\infty$ if $\phi(\lambda K)=\lambda^k\phi(K)$ for any $\lambda>0$ and $K\in\calK(V)$.

The space  $\Val^\infty(V)$ carries besides the Alesker product another algebraic structure. More precisely, the Bernig--Fu convolution is the commutative associative bilinear product on $\Val^\infty(V) \otimes\Dens(V^*)$ given by
$$
\left[\vol_n(\cdot+A_1)\otimes\mu_1^*\right]*\left[\vol_n(\cdot+A_2)\otimes\mu_2^*\right]=\mu_1^*(\vol_n)\vol_n(\cdot+A_1+A_2)\otimes\mu_2^*.
$$
It satisfies
\begin{displaymath}
  [\Val^\infty_k(V)\otimes\Dens(V^*)] * [\Val^\infty_l(V)\otimes\Dens(V^*)] \subset\Val^\infty_{k+l-n}(V)\otimes\Dens(V^*).
\end{displaymath}

\section{Invariant differential forms}

\subsection{Left-invariant and bi-invariant forms}

Let $G$ be a unimodular Lie group of dimension $n$ with Lie algebra $\mathfrak g$. In this section we will study invariant differential forms on the cosphere bundle $\p_G=\p_+(T^*G)$ over $G$. 

We will be mostly utilizing the two actions of $G$ on itself given by left translation and conjugation, which induce corresponding actions on $\p_G$ in the natural way:
\begin{align*}
L_h((g,[\xi]))=(hg,[L_h^*\xi])\quad\text{and}\quad \mathrm{Ad}_h((g,[\xi]))=(\mathbf{Ad}_h(g),[\Ad_h^*\xi]).
\end{align*} 
A differential form on $\p_G$ is called left-, resp. $\Ad$-invariant, if it is invariant under left translation, resp. $\mathrm{Ad}_G$. A form is bi-invariant if it is invariant under both left and right translation, or equivalently if it is both left- and $\Ad_{ G}$-invariant.
Using the diffeomorphism $G\times \mathbb P_+(\mathfrak g^*)\to \mathbb P_G$, $(g,[\xi])\mapsto(g, [L_g^*\xi])$, the corresponding induced actions on $G\times \mathbb P_+(\mathfrak g^*)$ are $L_h \times \mathrm{id}$ and $\mathbf{Ad}_h\times\Ad_h^*$.

Observe that we then have the following identifications:
\begin{align*}
	\Omega^p(\mathbb P_G)^{L_G} & \cong \bigoplus_k\Omega^{p-k,k}(G\times \mathbb P_+(\mathfrak g^*))^{L_G} \\
	& \cong \bigoplus_k\Omega^k(\p_+(\mathfrak g^*)) \otimes \Omega^{p-k}(G)^{L_G}\\
	& \cong \bigoplus_k \Omega^{k}(\mathbb P_+(\mathfrak g^*), \largewedge^{p-k} \mathfrak g^*).
\end{align*}
For $\tau\in \Omega^p(\mathbb P_G)^{L_G}$, we decompose accordingly $\tau=\sum_k\tau_k$, where
	\begin{equation}
		\tau_k(u_1,\dots, u_k)(x_1,\dots, x_{p-k}) =\tau(u_1,\dots, u_k,x_1,\dots, x_{p-k}). \label{eq_homogeneous_decomposition}
	\end{equation}
Here $u_1,\dots, u_{k}$ are vector fields on $\mathbb P_+(\mathfrak g^*)$ and $x_1,\dots, x_{p-k}$ left-invariant vector fields on $G$, which we identify with vector fields on $\p_G \cong G \times \p_+(\mathfrak g^*)$.

Considered as an action on  $\Omega(\p_+(\mathfrak g^*)) \otimes \Omega(G)^{L_G}$, the adjoint action $(\mathbf{Ad}_h\times\Ad_h^*)^*$ takes the pull-back of the form under the diffeomorphism $\Ad_h^*$ on $\p_+(\mathfrak g^*)$ and applies $(\mathbf{Ad}_h)^*$ to the value. Differentiating in the second factor then induces the action $(\Ad_h^*)^* \otimes \Ad_{h^{-1}}^*$ on $\Omega(\mathbb P_+(\mathfrak g^*), \largewedge^\bullet \mathfrak g^*)$. A form $\tau\in\Omega(\mathbb P_G)^{L_G}$ is thus bi-invariant if and only if the corresponding form $\tau\in\Omega(\mathbb P_+(\mathfrak g^*), \largewedge^\bullet \mathfrak g^*)$ is $\Ad$-invariant, that is if for any $h\in G$,
\begin{align}
\label{eq:right-inv}
(\Ad_h^*)^* \otimes \Ad_{h^{-1}}^*\tau=\tau.
\end{align}

Finally, take \[\tau\in\Omega^{k}(\mathbb P_+(\mathfrak g^*), \largewedge^{p-k} \mathfrak g^* \otimes\ori(\mathfrak g))\otimes\Dens(\mathfrak g^*)=\Omega^{k}(\mathbb P_+(\mathfrak g^*), \largewedge^{p-k} \mathfrak g^*)\otimes\largewedge^n\mathfrak g.\] The following notation will be used throughout the paper, see \eqref{eq:Hodge-1}:
\begin{equation} \label{eq_def_hodge_star}
	\tilde\tau=*^{-1}\tau\in \Omega^{k}(\mathbb P_+(\mathfrak g^*), \largewedge^{n-p+k} \mathfrak g).
\end{equation}

\subsection{The differential on invariant forms}

In the following we will consider the decomposition of left-invariant forms defined by \eqref{eq_homogeneous_decomposition}. As usual we set $\tau_k=0$ for $k<0$.

Observe that under the identification of $\Omega^k(\p_+(\g^*),\largewedge^{p-k} \g^*)$ with a subspace of $\Omega^p(\p_G)^{L_G}$, the de Rham differential on the former space differs from the restriction of the de Rham differential on the latter. To avoid any ambiguity, we will adhere to the following convention in the rest of the paper: When we write $\tau\in\Omega^p(\p_G)^{L_G}$, then $d\tau$ refers to the differential of the (complex-valued) $p$-form on $\p_G$, whereas for $\tau\in\Omega^k(\p_+(\g^*),V)$, where $V$ is a vector space, $d\tau$ denotes the differential of the $V$-valued $k$-form on $\p_+(\g^*)$.

\begin{Lemma}\label{lem:closed_compatible}
	Let $0 \leq p \leq 2n-1$, $\tau \in \Omega^p(\p_G)^{L_G}$ and $\tau_k \in \Omega^k(\p_+(\mathfrak g),\largewedge^{p-k}\mathfrak g^*)$ as in \eqref{eq_homogeneous_decomposition}. For $0\leq k\leq p$ we have
\begin{equation} \label{eq_d_and_partial}
	(d\tau)_k=d\tau_{k-1}+(-1)^{k+1} \partial^* \tau_k. 
\end{equation}	
\end{Lemma}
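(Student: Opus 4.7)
The plan is to use the diffeomorphism $\p_G \cong G \times \p_+(\g^*)$ to work on the product, compute $d\tau$ by Cartan's formula directly on the vectors $(u_1, \ldots, u_k, x_1, \ldots, x_{p-k+1})$ defining $(d\tau)_k$, and match the resulting expression with the claimed right-hand side.

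Viewing the $u_i$ as vector fields on $\p_+(\g^*)$ and the $x_j$ as left-invariant vector fields on $G$, lifted to vector fields on $G \times \p_+(\g^*)$ tangent to a single factor each, two families of terms in the Cartan expansion of $d\tau(u_1,\ldots,u_k,x_1,\ldots,x_{p-k+1})$ vanish for trivial reasons. First, the derivative terms $x_j(\tau(\ldots))$ are zero because, after evaluating $\tau$ on the remaining left-invariant $x$-vectors, what is left is a scalar function that, by left-invariance of $\tau$ and the $x$'s, depends only on $[\xi] \in \p_+(\g^*)$ and is therefore annihilated by $x_j$. Second, the mixed bracket terms $[u_i, x_j]$ vanish since these fields live on different factors of the product.

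What remains splits into two parts: the $u$-derivative and $u$-$u$-bracket terms combine to give Cartan's formula for the $\largewedge^{p-k+1}\g^*$-valued $(k-1)$-form $\tau_{k-1}$ on $\p_+(\g^*)$, contributing $d\tau_{k-1}(u_1,\ldots,u_k)(x_1,\ldots,x_{p-k+1})$; and the $x$-$x$-bracket terms, where at each $[\xi]$ one factors out $\sigma := \tau_k(u_1,\ldots,u_k) \in \largewedge^{p-k}\g^*$ and recognizes the remainder as $d_G\sigma$ evaluated at the identity of $G$.

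The main subtlety, and the step I expect to require the most care, is the sign $(-1)^{k+1}$ in front of $\partial^*\tau_k$, which arises from combining two contributions. First, in each Cartan bracket term the commutator $[x_j, x_{j'}]$ appears as the \emph{first} argument of $\tau$, whereas the decomposition \eqref{eq_homogeneous_decomposition} defining $\tau_k$ places the $u$-arguments first; moving the $x$-type bracket past the $k$ $u$-arguments costs $(-1)^k$. Second, the standard formula for the de Rham differential of a left-invariant form yields $d_G\sigma = -\partial^*\sigma$. The product of these two signs is $(-1)^{k+1}$, completing the proof.
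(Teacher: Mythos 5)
Your proposal is correct and takes essentially the same approach as the paper: apply Cartan's formula on $G\times\p_+(\g^*)$, observe that the $x$-derivative terms vanish by left-invariance and the mixed brackets $[u_i,x_j]$ vanish because the fields are tangent to different factors, then match the remaining $u$-part with $d\tau_{k-1}$ and the $x$-$x$-bracket part (after sliding the bracket past $k$ slots and invoking $d_G\sigma=-\partial^*\sigma$) with $(-1)^{k+1}\partial^*\tau_k$. The only cosmetic difference is that the paper chooses commuting $u_i$ so the $u$-$u$-brackets drop out, whereas you keep them and note they assemble into the Cartan expression for $d\tau_{k-1}$.
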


\proof
 Let $x_1,\dots, x_{p-k+1}$ be left-invariant vector fields on $G$ and $u_1,\dots, u_k$ commuting vector fields on some open subset $U\subset\mathbb P_+(\mathfrak g^*)$, which we identify with left-invariant vector fields near $\{e\} \times U \subset G\times\mathbb P_+(\mathfrak g^*)\cong\p_G$.  Since $\tau$ is left-invariant, it holds for all $i$ that
\begin{displaymath}
	x_i \tau (u_1,\ldots,u_k,x_1,\dots, \widecheck x_i, \dots, x_{p-k+1})=0,
\end{displaymath}
where $\widecheck x_i$ means that $x_i$ is omitted. Note also that $[u_i, x_j]=0$ for all $i_,j$. Therefore, 
\begin{align*}
	&d\tau(u_1,\ldots,u_k,x_1,\ldots,x_{p-k+1}) \\
	&=\sum_i (-1)^{i+1} u_i \tau ( u_1,\dots,\widecheck u_i,\dots, u_k,x_1,\dots, x_{p-k+1})\\
	&\quad+\sum_{i<j} (-1)^{i+j+k} \tau (u_1,\dots, u_k, [x_i,x_j],x_1\dots, \widecheck x_i, \dots, \widecheck x_j, \dots, x_{p-k+1} )\\
	&=d\tau_{k-1}(u_1,\ldots,u_k)(x_1,\ldots,x_{p-k+1})\\
	&\quad+(-1)^{k+1} \partial^* \tau_{k+1}(u_1,\ldots,u_k)(x_1,\ldots,x_{p-k+1}).
\end{align*} 
\endproof

\begin{Lemma}\label{lem:partial_leibnitz}
	For $X\in\wedge^{k+1}\mathfrak g$, $Y\in\wedge^{n-k}\mathfrak g$ one has
	\begin{align}
	\label{eq:partial_leibnitz}
	\partial X\wedge Y=(-1)^{k+1} X\wedge \partial Y.
	\end{align}
\end{Lemma}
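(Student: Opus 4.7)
Both sides of the claimed equation lie in the one-dimensional space $\wedge^n\mathfrak g$, so fixing a nonzero $\nu\in\wedge^n\mathfrak g^*$ it suffices to prove the scalar identity
\[
\nu(\partial X\wedge Y) = (-1)^{k+1}\nu(X\wedge\partial Y).
\]

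My plan is to transfer the Koszul operator $\partial$ on $\wedge^\bullet\mathfrak g$ to the Chevalley--Eilenberg differential $d_{CE}$ on $\wedge^\bullet\mathfrak g^*$, which \emph{is} a graded derivation, and then invoke duality. Two facts will be needed. The first is the adjoint relation
\[
(d_{CE}\alpha)(Z) = -\alpha(\partial Z), \qquad \alpha\in\wedge^p\mathfrak g^*,\ Z\in\wedge^{p+1}\mathfrak g,
\]
which reduces by bilinear extension to the degree-$1$ case where both sides equal $-\alpha([x,y])$ on $Z=x\wedge y$. The second is the transfer formula
\[
\iota_{\partial X}\nu = (-1)^k\, d_{CE}(\iota_X \nu), \qquad X\in\wedge^{k+1}\mathfrak g,
\]
which crucially invokes unimodularity. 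In the base case $k=0$ it asserts $d_{CE}(\iota_v\nu)=0$ for every $v\in\mathfrak g$; by Cartan's magic formula $\mathcal L_v=d_{CE}\iota_v+\iota_vd_{CE}$ together with $d_{CE}\nu=0$ (trivial in top degree), this is equivalent to $\mathcal L_v\nu=0$, which is precisely \eqref{eq:trace}. The inductive step, writing $X=v\wedge X'$ with $v\in\mathfrak g$, $X'\in\wedge^k\mathfrak g$, uses the commutation $[\mathcal L_v,\iota_{X'}]=\iota_{\mathcal L_v X'}$ together with the Koszul-level identity $\partial(v\wedge X') = \mathcal L_v X' - v\wedge\partial X'$ (a direct consequence of the defining formula for $\partial$) to reduce the statement for $X$ to that for $X'$.

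Granting both subsidiary identities, the proof is immediate:
\[
\nu(\partial X\wedge Y) = (\iota_{\partial X}\nu)(Y) = (-1)^k(d_{CE}\iota_X\nu)(Y) = (-1)^{k+1}(\iota_X\nu)(\partial Y) = (-1)^{k+1}\nu(X\wedge\partial Y).
\]

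The main obstacle is the transfer formula: while conceptually it expresses that $\partial$ and $d_{CE}$ are intertwined by the Hodge-type map $X\mapsto \iota_X\nu$ (cf.\ \eqref{eq_def_hodge_star}), the induction demands careful sign bookkeeping, as $\partial$ is \emph{not} a graded derivation on $\wedge^\bullet\mathfrak g$ — its defect from being one is the Schouten--Nijenhuis bracket. I expect this bookkeeping to be routine but not instantaneous. That unimodularity is genuinely needed can be seen on the non-unimodular $2$-dimensional Lie algebra with $[e_1,e_2]=e_2$: already for $X=e_1$, $Y=e_1\wedge e_2$ the identity fails.
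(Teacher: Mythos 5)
Your proof is correct, and it takes a genuinely different route from the paper's. The paper argues directly from the defining formula for $\partial$: it expands $X$ and $Y$ in a basis of $\g$, splits into cases according to how many basis factors they share, and observes that after the obvious cancellations the defect of the identity is $(-1)^k\tr(\ad_{e_{k+1}})\,e_1\wedge\cdots\wedge e_n$, which vanishes by unimodularity. You instead dualize across the contraction $X\mapsto\iota_X\nu$ to Chevalley--Eilenberg cohomology: the adjoint relation $(d_{CE}\alpha)(Z)=-\alpha(\partial Z)$ makes the lemma equivalent to the transfer formula $\iota_{\partial X}\nu=(-1)^k d_{CE}(\iota_X\nu)$, which you prove by induction on $k$. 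Your base case is $d_{CE}(\iota_v\nu)=\mathcal L_v\nu=-\tr(\ad_v)\,\nu=0$, which is precisely where unimodularity enters, and the inductive step you outline does close: with $X=v\wedge X'$, the identity $\partial(v\wedge X')=\mathcal L_vX'-v\wedge\partial X'$, the sign relation $\iota_{X'}\iota_v=(-1)^k\iota_v\iota_{X'}$, Cartan's formula, the inductive hypothesis, and one further appeal to $\mathcal L_v\nu=0$ inside the Leibniz rule $\mathcal L_v(\iota_{X'}\nu)=\iota_{\mathcal L_vX'}\nu+\iota_{X'}\mathcal L_v\nu$ reduce the claim for $X$ to that for $X'$. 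Your route is longer but more conceptual: it exhibits the lemma as the statement that the Poincar\'e-type contraction intertwines the Koszul boundary on $\wedge^\bullet\g$ with the Lie algebra coboundary on $\wedge^\bullet\g^*$, and it isolates the role of unimodularity as the closedness of $\iota_v\nu$, i.e.\ the $\ad$-invariance of the volume form. The paper's basis computation is shorter and self-contained; both ultimately rest on $\tr(\ad)=0$. Your non-unimodular counterexample in dimension two correctly confirms that this running hypothesis is needed.
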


\proof

	Let $e_1,\ldots,e_n$ be a basis of $\mathfrak g$. By linearity we may assume that both $X$ and $Y$ are wedge products of basis elements. We will distinguish three cases.
	First, if $X, Y$ have three or more common factors, then both sides of \eqref{eq:partial_leibnitz} vanish.
	Second, if $X, Y$ have two factors in common, say $X=e_1\wedge \dots\wedge e_{k+1}$ and $Y=e_k\wedge\dots\wedge e_{n-1}$, then the only possibly non-zero summand on both sides of \eqref{eq:partial_leibnitz}  is  
	\begin{displaymath}
		[e_k, e_{k+1}]\wedge e_1\wedge\dots \wedge e_{k-1}\wedge e_k\wedge\dots\wedge e_{n-1}.
	\end{displaymath}
	It remains to consider the case when $X,Y$ have exactly one factor in common, say $X=e_1\wedge \dots\wedge e_{k+1}$ and $Y=e_{k+1} \wedge\dots\wedge e_n$. Then 
	\begin{align*}
		\partial X \wedge Y & =  (-1)^{k} \sum_{i=1}^{k} e_1 \wedge \cdots \wedge e_{i-1} \wedge [e_{k+1}, e_i] \wedge e_{i+1} \wedge \cdots \wedge e_n,\\
		X \wedge \partial Y & =  \sum_{i=k+2}^n e_1 \wedge \cdots \wedge e_{i-1} \wedge [e_{k+1},e_i] \wedge e_{i+1} \wedge \cdots \wedge e_n,
	\end{align*}
	so that 
	\begin{align*}
		&\partial X \wedge Y+(-1)^k X \wedge \partial Y \\
		&\quad = (-1)^k \sum_{i=1}^n e_1 \wedge \cdots \wedge e_{i-1} \wedge [ e_{k+1},e_i] \wedge e_{i+1} \wedge \cdots \wedge e_n\\
		&\quad =(-1)^k \tr (\ad_{ e_{k+1}}),
	\end{align*}
	which vanishes by \eqref{eq:trace} since $G$ is unimodular.
\endproof

\begin{Corollary}\label{cor:closed_form}
Let $0 \leq p \leq 2n-1$ and $\tau \in \Omega^p(\p_G)^{L_G}\otimes \largewedge^n\g$. For $0\leq k\leq p$ we have
	\begin{align}
\label{eq:dtilde}
		(\widetilde{d\tau})_k & =d(\tilde \tau_{k-1}) +(-1)^{n-p+1} \partial \tilde\tau_k.
	\end{align}
In particular, $\tau \in \Omega^n(\p_G)^{L_G} \otimes \largewedge^n\g$ is closed if and only if
	\begin{align}
\label{eq:compatible}
		d \tilde \tau_{k-1}=\partial \tilde \tau_k
	\end{align} 
for all $k$.	
\end{Corollary}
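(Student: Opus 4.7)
The plan is to deduce formula \eqref{eq:dtilde} from Lemma \ref{lem:closed_compatible} by applying the isomorphism $*^{-1}:\largewedge^\bullet\mathfrak g^*\otimes\largewedge^n\mathfrak g\to\largewedge^{n-\bullet}\mathfrak g$ componentwise to the identity
$$(d\tau)_k=d\tau_{k-1}+(-1)^{k+1}\partial^*\tau_k,$$
and then reading off the closedness criterion \eqref{eq:compatible} by specialization to $p=n$. Because $*^{-1}$ acts only on the coefficient space and is constant along the base, it commutes with the de Rham differential on $\p_+(\mathfrak g^*)$, so the first summand transforms directly into $d\tilde\tau_{k-1}$. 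The substantive content is therefore the pointwise identity
$$*^{-1}(\partial^*\sigma)=(-1)^{n-j}\,\partial(*^{-1}\sigma),\qquad \sigma\in\largewedge^j\mathfrak g^*\otimes\largewedge^n\mathfrak g,$$
which, combined with the prefactor $(-1)^{k+1}$ and the choice $j=p-k$, yields the total sign $(-1)^{k+1+n-p+k}=(-1)^{n-p+1}$ appearing in \eqref{eq:dtilde}.

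To establish the pointwise identity I would pair both sides with an arbitrary decomposable $X_1\wedge\cdots\wedge X_{j+1}\in\largewedge^{j+1}\mathfrak g$ and apply the characterization \eqref{eq_star_explicit} of $*^{-1}$. Unwinding the definition of the dual map $\partial^*$ turns the left-hand side into $*^{-1}\sigma\wedge\partial(X_1\wedge\cdots\wedge X_{j+1})$. Lemma \ref{lem:partial_leibnitz} (applied with degrees $j+1$ and $n-j$) transfers $\partial$ onto the $*^{-1}\sigma$ factor at the price of a sign $(-1)^{j+1}$, and then two uses of graded commutativity in $\largewedge\mathfrak g$ contribute $(-1)^{(n-j)j}$ and $(-1)^{(j+1)(n-j-1)}$ while restoring the ordering $\partial(*^{-1}\sigma)\wedge X_1\wedge\cdots\wedge X_{j+1}$. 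A short parity check shows that the three exponents collapse, modulo $2$, to $n-j$, which is the claimed sign.

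For the second assertion I would use that $d\tau$ is again left-invariant, hence its decomposition $d\tau=\sum_k(d\tau)_k$ is a direct-sum decomposition; closedness of $\tau$ is therefore equivalent to $(\widetilde{d\tau})_k=0$ for every $k$. Substituting $p=n$ into \eqref{eq:dtilde} makes the sign $(-1)^{n-p+1}$ equal to $-1$, so the vanishing of each component rewrites exactly as $d\tilde\tau_{k-1}=\partial\tilde\tau_k$, which is \eqref{eq:compatible}.

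The main obstacle is the sign bookkeeping in the pointwise step: one must carefully combine the Koszul-type Leibniz rule of Lemma \ref{lem:partial_leibnitz} (itself crucially depending on unimodularity through $\tr\ad=0$) with two applications of graded commutativity in $\largewedge\mathfrak g$, and verify that the resulting parities cancel to leave exactly the single factor $(-1)^{n-j}$. Everything else in the argument is formal: commuting $*^{-1}$ with $d$, decomposing $d\tau$ according to \eqref{eq_homogeneous_decomposition}, and specializing the sign.
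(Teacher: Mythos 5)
Your proof is correct and follows essentially the same strategy as the paper's: apply the $*^{-1}$ isomorphism to Lemma~\ref{lem:closed_compatible} and use Lemma~\ref{lem:partial_leibnitz} together with the nondegeneracy of the wedge pairing to convert $\partial^*$ into $\partial$ with the appropriate sign. The only cosmetic difference is that the paper applies Lemma~\ref{lem:partial_leibnitz} with $X=*^{-1}\tau_k$ and $Y$ the test vector (which makes the sign come out in a single step), whereas you take $X$ to be the test vector and $Y=*^{-1}\sigma$, then pay for two graded-commutativity swaps; your parity computation $(n-j)j+(j+1)+(j+1)(n-j-1)\equiv n-j\pmod 2$ correctly recovers the same sign.
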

\proof
We claim that
\begin{align}
\label{eq:claim_partial}
	\partial^*\tau_k=(-1)^{n-p+k} *\partial *^{-1}\tau_k. 
\end{align}
	Applying both sides of \eqref{eq:claim_partial} to $Y \in \largewedge^{p-k+1}\mathfrak g$, the left-hand side becomes
	$$\langle \partial^*\tau_k,Y\rangle=\langle \tau_k,\partial Y\rangle.$$
	For the right-hand side we use Lemma \ref{lem:partial_leibnitz} and compute  
\begin{displaymath}
	\langle (-1)^{n-p+k} *\partial *^{-1}\tau_k,Y\rangle= (-1)^{n-p+k} \partial *^{-1}\tau_k \wedge Y= *^{-1}\tau_k \wedge \partial Y=\langle \tau_k,\partial Y\rangle,
\end{displaymath}
proving the claim.  \eqref{eq:dtilde} then follows from \eqref{eq:claim_partial} by applying $*^{-1}$ to \eqref{eq_d_and_partial}. 
\endproof

%%%%%%%%%%%%%%%%%%%%%%%%%%%%%%%%%%%%%%%%%%%%%

%%%%%%%%%%%%%%%%%%%%%%%%%%%%%%%%%%%%%%%%%%%%%

\section{Convolution of invariant differential forms}

\label{s:conv_forms}

We will now define a formal convolution product of invariant differential forms which will later be used to write an explicit formula for the convolution of smooth valuations on a general Lie group. Throughout this section, $G$ will be a unimodular Lie group of dimension $n$, $\vol$ some Haar measure,  $\mathfrak g$ its Lie algebra, $e_1,\dots,e_n$ a basis of $\mathfrak g$, $e_1^*,\ldots,e_n^*\in\g^*$ the dual basis,  and $e_1^\sharp,\dots,e_n^\sharp$ the induced vector fields on $\p_+(\g^*)$. Recall that $\Dens(\g)=\largewedge^n\g^*\otimes\ori(\g)$.

\begin{Definition}
	For $\tilde\tau,\tilde\zeta\in\Omega(\p_+(\mathfrak g^*),\largewedge^\bullet\mathfrak g)$ and $r\geq0$ we set
	\begin{displaymath}
		\hat S_r(\tilde \tau \otimes \tilde \zeta)= \sum_{|K|=r} \iota_{e_K^\sharp} \tilde\tau \wedge \iota_{e_K^*} \tilde\zeta,
	\end{displaymath}
	where the sum runs over all sets $K=\{k_1,\dots,k_r\}\subset\{1,\dots,n\}$ and we define $\iota_{e_K^\sharp}=\iota_{e^\sharp_{k_1}}\circ\cdots\circ\iota_{e^\sharp_{i_r}}$ and similarly for $\iota_{e^*_K}$.
\end{Definition}

Observe that the definition of $\hat S_r$ depends neither on the ordering of the elements in $K$ nor on the choice of a basis.

\begin{Definition} \label{def_convolution_of_forms}
	The \emph{convolution product} $\tau * \zeta \in \Omega^{p+q-n}(\p_G)^{L_G} \otimes \largewedge^n\g$ of (twisted) differential forms $\tau \in \Omega^p(\p_G)^{G \times G} \otimes \largewedge^n\g$ and $\zeta \in \Omega^q(\p_G)^{L_G} \otimes \largewedge^n\g$ is defined by 
	\begin{displaymath}
		\widetilde{(\tau * \zeta)}_j=\sum_{k+l \geq j} \epsilon^{p,q}_{k,l,j} \hat S_{k+l-j}(\tilde \tau_k \otimes \tilde \zeta_l),
	\end{displaymath}
	where 
	\begin{align*}
		\epsilon_{k,l,j}^{p,q}=(-1)^{(n+q)(n+p+l+j)+k(l+j+1)}.
	\end{align*}
	
\end{Definition}

Fixing an orientation on $G$, we can identify the Haar measure $\vol$ with a top differential form on $G$, which in turn can be both restricted to $T_eG=\g$ or pulled-back to $\mathbb P_G$. We then obtain an element
$\vol\otimes \vol^*\in \Omega^n(\p_G) \otimes \largewedge^n \g$, which depends neither on the choice of orientation, nor on the normalization of $\vol$. Explicitly,
	\begin{displaymath}
		\vol \otimes \vol^*|_{(g,[\xi])}(v_1,\ldots,v_n)=d\pi(v_1) \wedge \ldots \wedge d\pi(v_n) \in \largewedge^n T_g G \cong \largewedge^n \g.
	\end{displaymath}
We then have
\begin{equation} \label{eq_left_and_right_convolution_vol}
	(\vol \otimes \vol^*) * \zeta= \zeta\quad\text{and} \quad \tau * (\vol \otimes \vol^*)=\tau.
\end{equation}

\begin{Proposition}
	If $\tau,\zeta\in\Omega(\p_G)^{G\times G} \otimes \largewedge^n\g$ are bi-invariant, then so is $\tau*\zeta$.
\end{Proposition}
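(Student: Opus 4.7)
The form $\tau*\zeta$ lies in $\Omega^{p+q-n}(\p_G)^{L_G} \otimes \largewedge^n \g$ by construction, so left-invariance holds automatically and only $\mathrm{Ad}$-invariance remains to be verified. Using the compatibility \eqref{eq:AdHodge} of the Hodge star with $\mathrm{Ad}$, the bi-invariance condition \eqref{eq:right-inv} for $\tau$ and $\zeta$ translates, under the correspondence \eqref{eq_def_hodge_star}, into $\mathrm{Ad}$-invariance of $\tilde\tau_k,\tilde\zeta_l \in \Omega(\p_+(\g^*), \largewedge^\bullet\g)$ with respect to the combined action $(\Ad_h^*)^*\otimes \Ad_h$, i.e.\ pull-back along the diffeomorphism $\Ad_h^*$ of $\p_+(\g^*)$ together with the natural $\Ad_h$ on the values. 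Since the coefficients $\epsilon^{p,q}_{k,l,j}$ are constants, it suffices to show that each $\hat S_r(\tilde\tau_k\otimes\tilde\zeta_l)$ is invariant in this sense.

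The crux is the observation, noted already after Definition \ref{def_convolution_of_forms}, that $\sum_{|K|=r}\iota_{e_K^\sharp}\tilde\tau\wedge\iota_{e_K^*}\tilde\zeta$ does not depend on the choice of basis $\{e_i\}$. This reflects the fact that $\sum_i e_i\otimes e_i^* \in \g\otimes\g^*$ is the canonical element corresponding to $\mathrm{id}_\g$; equivalently, $\sum_{|K|=r}e_K\otimes e_K^*\in\largewedge^r\g\otimes\largewedge^r\g^*$ is the canonical element and is therefore invariant under any change of basis.

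With this in hand, the plan is a direct computation: apply $(\Ad_h^*)^*\otimes \Ad_h$ to $\hat S_r(\tilde\tau_k\otimes\tilde\zeta_l)$ termwise using three elementary commutation rules. First, $(\Ad_h^*)_* e_i^\sharp = (\Ad_{h^{-1}}e_i)^\sharp$, which follows from the definition of $x^\sharp$ together with $\Ad_h^*\Ad_{\exp(tx)}^*\Ad_{h^{-1}}^* = \Ad_{\exp(t\Ad_{h^{-1}}x)}^*$; hence $(\Ad_h^*)^*\circ\iota_{e_i^\sharp} = \iota_{(\Ad_h e_i)^\sharp}\circ(\Ad_h^*)^*$. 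Second, on $\largewedge^\bullet\g$ one has $\Ad_h\circ\iota_{e_i^*} = \iota_{\Ad_{h^{-1}}^* e_i^*}\circ\Ad_h$, which is immediate on decomposable tensors. Third, pull-back and the value-action commute with interior multiplication in the opposite factor. Invoking the $\mathrm{Ad}$-invariance of $\tilde\tau_k$ and $\tilde\zeta_l$, the outcome is
\[
\sum_{|K|=r}\iota_{(\Ad_h e_K)^\sharp}\tilde\tau_k\wedge\iota_{\Ad_{h^{-1}}^* e_K^*}\tilde\zeta_l,
\]
which is again $\hat S_r(\tilde\tau_k\otimes\tilde\zeta_l)$ computed in the basis $\{\Ad_h e_i\}$ (whose dual basis is $\{\Ad_{h^{-1}}^*e_i^*\}$). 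Basis-independence then recovers $\hat S_r(\tilde\tau_k\otimes\tilde\zeta_l)$.

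The main obstacle is purely bookkeeping: keeping track of which of $\Ad_h$, $\Ad_{h^{-1}}$, $\Ad_h^*$, $\Ad_{h^{-1}}^*$ acts on each of $\p_+(\g^*)$, $\largewedge^\bullet\g$, $\largewedge^\bullet\g^*$ and the $\largewedge^n\g$-twist, and how bi-invariance is transported through the Hodge star by \eqref{eq:AdHodge}. Once these conventions are fixed, the verification is essentially the one-line manipulation above.
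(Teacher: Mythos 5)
Your proof is correct and takes essentially the same route as the paper's: reduce to $\mathrm{Ad}$-invariance of each $\hat S_r(\tilde\tau_k\otimes\tilde\zeta_l)$, commute the pullback and value actions past $\iota_{e_K^\sharp}$ and $\iota_{e_K^*}$, invoke invariance of $\tilde\tau_k,\tilde\zeta_l$, and conclude by basis-independence of $\hat S_r$. The only caveat is a few $h\leftrightarrow h^{-1}$ mismatches relative to the paper's convention $\Ad_g^*:=(\Ad_{g^{-1}})^*$ (e.g.\ the paper's transformation law is $(\Ad_{h^{-1}}^*)^*\otimes\Ad_h$, and the dual basis to $\{\Ad_he_i\}$ is $\{\Ad_h^*e_i^*\}$ in their notation); your version is internally consistent under the na\"ive reading $\Ad_g^*=(\Ad_g)^*$, and since the invariance is required for all $h\in G$, the argument goes through unchanged.
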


\begin{proof}
	By \eqref{eq:right-inv} and \eqref{eq:AdHodge}, it suffices to show
	$$
	\left((\Ad_{h^{-1}}^*)^*\otimes\Ad_h\right) \hat S_r(\tilde\tau\otimes\tilde\zeta)=\hat S_r(\tilde\tau\otimes\tilde\zeta), \quad r\geq 0.
	$$
	To this end, observe first that for any index subset $K$ we have
	\begin{align*}
		(\Ad_{h^{-1}}^*)^*\circ\iota_{e_K^\sharp}=\iota_{(\Ad_he_K)^\sharp}\circ(\Ad_{h^{-1}}^*)^*
	\end{align*}
	and
	\begin{align*}
		\Ad_h\circ\iota_{e_K^*}=\iota_{(\Ad_h e_K)^*}\circ\Ad_h.
	\end{align*}
	Then the claim follows easily from the invariance of $\tau$ and $\zeta$ and the fact that the definition of $\hat S_r$ is independent of the basis of $\g$. 
\end{proof}

Now we will show that the product defined in Definition \ref{def_convolution_of_forms} is associative. For the proof, the following notation will be used: Let $K=\{k_1,\dots,k_a\}$ and $L=\{l_1,\dots,l_b\}$ be strictly increasing sequences.  We define
\begin{align*}
\epsilon(K,L)=
\begin{cases}
1,&\text{if }(k_1,\dots,k_a,l_1,\dots,l_b)\text{ is an even permutation of }K\cup L,\\
0,&\text{if }K\cap L\neq\emptyset,\\
-1,&\text{if }(k_1,\dots,k_a,l_1,\dots,l_b)\text{ is an odd permutation of }K\cup L.
\end{cases}
\end{align*}
This notation will also be used in Section \ref{s:compact}.

\begin{Proposition}
	\label{prop:associativity}
The convolution product of forms over $\mathbb P_G$ is associative.
\end{Proposition}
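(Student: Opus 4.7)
The plan is to prove the claim by direct expansion and term-by-term matching. Throughout, assume $\tau_1,\tau_2\in\Omega(\p_G)^{G\times G}\otimes\largewedge^n\mathfrak g$ are bi-invariant and $\tau_3\in\Omega(\p_G)^{L_G}\otimes\largewedge^n\mathfrak g$ is left-invariant, which is the setting in which both $(\tau_1*\tau_2)*\tau_3$ and $\tau_1*(\tau_2*\tau_3)$ are defined (by the preceding proposition $\tau_1*\tau_2$ is again bi-invariant). The goal is to show that each side decomposes as a sum indexed by triples of pairwise disjoint subsets $A,B,C\subseteq\{1,\dots,n\}$, whose summand is, up to a common sign,
$$\iota_{e_A^\sharp}\iota_{e_B^\sharp}\tilde\tau_{1,k}\wedge\iota_{e_A^*}\iota_{e_C^\sharp}\tilde\tau_{2,l}\wedge\iota_{e_B^*}\iota_{e_C^*}\tilde\tau_{3,m},$$
with $A$ accounting for contractions between $\tilde\tau_1$ and $\tilde\tau_2$, $B$ between $\tilde\tau_1$ and $\tilde\tau_3$, and $C$ between $\tilde\tau_2$ and $\tilde\tau_3$.

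The two key tools to produce this expansion are (i) the identities $\iota_{e^\sharp_{K\sqcup L}}=\epsilon(K,L)\,\iota_{e_K^\sharp}\iota_{e_L^\sharp}$ and $\iota_{e^*_{K\sqcup L}}=\epsilon(K,L)\,\iota_{e_K^*}\iota_{e_L^*}$ for disjoint $K,L$; and (ii) the graded Leibniz rule for interior products on wedge products of forms, together with the analogous rule for interior products with elements of $\mathfrak g^*$ acting on wedges in $\largewedge^\bullet\mathfrak g$. Starting from $(\tau_1*\tau_2)*\tau_3$, I expand $\widetilde{\tau_1*\tau_2}$ via Definition \ref{def_convolution_of_forms} as a sum over a subset $A$ of terms $\iota_{e_A^\sharp}\tilde\tau_{1,k}\wedge\iota_{e_A^*}\tilde\tau_{2,l}$, then apply $\hat S$ once more with the outer convolution by $\tilde\tau_3$, which introduces an interior product $\iota_{e^\sharp_{B\sqcup C}}$ on the form side and $\iota_{e^*_{B\sqcup C}}$ on the polyvector side; splitting $B\sqcup C$ via (i) and distributing by Leibniz across the two factors $\iota_{e_A^\sharp}\tilde\tau_{1,k}$ and $\iota_{e_A^*}\tilde\tau_{2,l}$ yields the claimed summand. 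An identical analysis of $\tau_1*(\tau_2*\tau_3)$, with $C$ playing the role of the inner contraction subset, yields the same summands.

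The main obstacle is sign bookkeeping. Each summand collects contributions from (a) two prefactors $\epsilon^{p,q}_{k,l,j}$ arising in the iterated $*$; (b) shuffle signs from the splitting identities of (i); (c) Koszul signs of the form $(-1)^{|M|\cdot\deg\alpha}$ from the graded Leibniz distribution across wedge products. Equating the two sides reduces to an explicit combinatorial identity between products of the form $\epsilon^{p_1+p_2-n,p_3}\cdot\epsilon^{p_1,p_2}$ and $\epsilon^{p_1,p_2+p_3-n}\cdot\epsilon^{p_2,p_3}$ (with lower indices determined by $|A|,|B|,|C|,k,l,m,j$), after normalization by the Koszul and shuffle signs. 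The definition $\epsilon^{p,q}_{k,l,j}=(-1)^{(n+q)(n+p+l+j)+k(l+j+1)}$ is designed precisely to make this identity hold, and the final verification is a direct but lengthy exponent computation. A useful intermediate step is to first prove associativity, up to an explicit bidegree-dependent sign, of the undecorated bracket $(\tilde\tau,\tilde\zeta)\mapsto\sum_r\hat S_r(\tilde\tau\otimes\tilde\zeta)$ --- which is essentially formal and depends only on (i) and (ii) --- and then check separately that the $\epsilon^{p,q}_{k,l,j}$'s exactly cancel that residual sign.
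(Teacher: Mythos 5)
Your proposal is correct and matches the paper's own argument in all essentials: the paper also expands both iterated convolutions directly, uses the graded Leibniz rule for interior products to split the outer index set into two disjoint pieces, reindexes so both sides become sums over disjoint triples of subsets (your $A,B,C$ are the paper's $K,L,M$, up to reordering the interior products), and concludes with an explicit verification that the accumulated sign prefactors $\epsilon_3$ and $\epsilon_3'$ agree. Your suggested intermediate step of first proving associativity of the undecorated $\hat S$ up to a bidegree-dependent sign is a harmless reorganization of the same computation, not a different route.
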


\begin{proof}
Let $\tau\in\Omega^p(\p_G)^{G\times G} \otimes \Dens(\mathfrak g^*)$,  $\zeta\in\Omega^q(\p_G)^{G\times G} \otimes \Dens(\mathfrak g^*)$, and $\kappa\in\Omega^r(\p_G)^{L_G} \otimes \Dens(\mathfrak g^*)$. Using the definition, we compute 
		\begin{align*}
			\widetilde{(\tau * \zeta) * \kappa} &= \sum_{k,l,m} \sum_{J,K}  \epsilon_1  \iota_{e_J^\sharp}(\iota_{e_K^\sharp}\tilde\tau_k \wedge \iota_{e_K^*} \tilde\zeta_l)\wedge \iota_{e_J^*}\tilde\kappa_m\\
			& \quad= \sum_{k,l,m} \sum_{J,K} \sum_{L \subset J} \epsilon_2  \iota_{e_{L \cup K}^\sharp} \tilde\tau_k \wedge \iota_{e_{J \setminus L}^\sharp} \iota_{e_K^*} \tilde\zeta_l \wedge \iota_{e_J^*}\tilde\kappa_m\\
			&\quad = \sum_{k,l,m} \sum_{\substack{K,L,M \\ L \cap M = \emptyset \\ L \cap K = \emptyset}} \epsilon_3 \iota_{e_{L \cup K}^\sharp} \tilde\tau_k \wedge \iota_{e_M^\sharp} \iota_{e_K^*}\tilde \zeta_l \wedge \iota_{e_{L \cup M}^*}\tilde\kappa_m,
		\end{align*}
		where 
		\begin{align*}
			\epsilon_1 & = \epsilon_{k,l,k+l-|K|}^{p,q} \epsilon_{k+l-|K|,m,k+l+m-|K|-|J|}^{p+q-n,r}\\
			\epsilon_2 & = \epsilon_1 \cdot (-1)^{(k-|K|)|J \setminus L|} \epsilon(L,J\setminus L) \epsilon(L,K)\\
			\epsilon_3 & =\epsilon_{k,l,k+l-|K|}^{p,q} \epsilon_{k+l-|K|,m,k+l+m-|K|-|L|-|M|}^{p+q-n,r} \cdot (-1)^{(k-|K|)|M|}  \epsilon(L,M) \epsilon(L,K).
		\end{align*}
			Similarly, 
			\begin{align*}
				\widetilde{\tau * (\zeta * \kappa)} & = \sum_{k,l,m} \sum_{J,M}\epsilon'_1  \iota_{e_{J}^\sharp}\tilde\tau_k \wedge \iota_{e_{J}^*}( \iota_{e_{M}^ \sharp}\tilde\zeta_l \wedge \iota_{e_{M}^*} \tilde\kappa_m)\\
				& \quad= \sum_{k,l,m} \sum_{J,M} \sum_{K \subset J} \epsilon'_2   \iota_{e_{J}^\sharp}\tilde\tau_k \wedge  \iota_{e^*_{K}} \iota_{e_{M}^\sharp}\tilde\zeta_l \wedge \iota_{e^*_{J \setminus K \cup M}}\tilde \kappa_m \\
				& =\quad \sum_{k,l,m} \sum_{\substack{K,L,M\\ L \cap K=\emptyset\\L \cap M=\emptyset}} \epsilon_3'  \iota_{e_{K\cup L}^\sharp}\tilde\tau_k \wedge  \iota_{e^*_{K}} \iota_{e_{M}^\sharp}\tilde\zeta_l \wedge \iota_{e^*_{L\cup M}} \tilde\kappa_m,
			\end{align*}
			where 
			\begin{align*}
				\epsilon_1' & = \epsilon_{k,l+m-|M|,k+l+m-|M|-|J|}^{p,q+r-n} \epsilon_{l,m,l+m-|M|}^{q,r}\\
				\epsilon_2' & = \epsilon_1' \cdot (-1)^{(n-q+l)|J\setminus K|}  \epsilon(K,J\setminus K) \epsilon(J\setminus K,M)\\
				\epsilon_3' & = \epsilon_{k,l+m-|M|,k+l+m-|M|-|K|-|L|}^{p,q+r-n} \epsilon_{l,m,l+m-|M|}^{q,r} \cdot (-1)^{(n-q+l)|L|}  \epsilon(K,L) \epsilon(L,M).
			\end{align*}
			Using $\epsilon(K,L) \epsilon(L,K)=(-1)^{|K|\cdot|L|}$ it follows that $\epsilon_3= \epsilon_3'$.
		\end{proof}

\section{Compact Lie groups}
\label{s:compact}
In this section, we prove the explicit formula for the convolution of left-invariant, smooth valuations on a compact Lie group in terms of differential forms given in Theorem \ref{mthm:compact}. Throughout the section, $G$ will denote an $n$-dimensional compact Lie group with Lie algebra $\mathfrak g$.

We will consider the Haar probability measure on $G$. This choice identifies $\Dens(\g^*)$ with $\CC$. The isomorphism \eqref{eq:Hodge-1} thus becomes $*^{-1}:\largewedge^k\g^* \otimes \ori(\g) \to\largewedge^{n-k}\g$ and the convolution of invariant differential forms of Definition \ref{def_convolution_of_forms} is a map  $\Omega_{\ori}(\PP_G)^{G\times G} \times \Omega_{\ori}(\PP_G)^{L_G} \to \Omega_{\ori}(\PP_G)^{L_G}$.

We will need some maps defined in \cite{alesker_bernig_convolution}. Consider
\begin{displaymath}
	\mathcal M:=\{(g_1,g_2,[\xi_1:0]\} \cup \{(g_1,g_2,[0:\xi_2])\} \subset \p_{G \times G}
\end{displaymath}
and let $F:\hat \p \to \p_{G \times G}$ be the oriented blow-up along $\mathcal M$. Let $(G \times G) \times_{m,\pi} \p_G$ be the preimage of the diagonal under the map $m \times \pi:(G \times G) \times \p_G \to G \times G$, endowed with the preimage orientation, see \cite{guillemin_pollack}. We then have a diffeomorphism 
\begin{align} 
\begin{split} \label{eq_diffeo_fiberproduct_product}
	 (G \times G)_{m,\pi}\times \p_G & \cong G \times G \times \p_+(\mathfrak g^*), \\
	 (g,h,gh,[\xi])  & \mapsto (g,h,[ dL^*_{(gh)^{-1}}\xi]).
\end{split}
\end{align}
If we endow the space on the right-hand side with the product orientation, then this diffeomorphism is orientation preserving if and only if $n$ is even.

The differential of the multiplication map $m:G\times G\to G$ is a map
\begin{displaymath}
	dm^*:(G \times G) \times_{m,\pi} \p_G \to \p_{G \times G}
\end{displaymath}
whose image is disjoint from $\mathcal M$ by \cite[Lemma 5.2]{alesker_bernig_convolution}.  Let $p:(G \times G) \times_{m,\pi} \p_G \to \p_G$ be the projection and let $\Phi:\hat \p \to \p_G \times \p_G$ be the map given outside $\mathcal M$ by $(g_1,g_2,[\xi_1:\xi_2]) \mapsto ((g_1,[\xi_1]),(g_2,[\xi_2]))$. Since $F$ is a local diffeomorphism outside $\mathcal M$, we may define the map $r:=\Phi \circ F^{-1} \circ dm^*:(G \times G) \times_{m,\pi} \p_G \to \p_G \times \p_G$. Explicitly,
\begin{displaymath}
	r(g,h,[\xi])=((g,[R_{h^{-1}}^*\xi]),(h,[L_{g^{-1}}^*\xi)])).
\end{displaymath} 
Finally, we will consider the obvious projections $p_1,p_2: \p_G \times \p_G \to \p_G$, $\tilde p_1,\tilde p_2:G \times G \to G$, $\hat p_1:\p_G \times G \to \p_G$, and $\hat p_2:\p_G \times G \to G$. We thus have the following commutative diagram:
\begin{equation*}
	\begin{tikzcd}
	\hat \p \arrow[r,"\Phi"] \arrow[d,"F"] & \p_G \times \p_G  \arrow[dd,"\mathrm{id} \times \pi"]  \\
	\p_{G \times G} & \\
	(G \times G)\times_{m,\pi} \p_G \arrow[u,"dm^*"] \arrow[d,"p"] \arrow[uur,"r"] \arrow[r,"\cong"] & \p_G\times G\\
	\p_G         
	\end{tikzcd}
\end{equation*}

The right translation by $h$ is a diffeomorphism $R_h:G \to G$. It induces a diffeomorphism $ R_h(g,[\xi]):=(gh,[R_h^*\xi])$ on $\p_G$. For differential forms $\rho\in\Omega(G)$ and $\tau\in\Omega(\p_G)$ and a measure $\mu\in\mathcal M(G)$, the convolution from the right is defined in the usual way:
\begin{displaymath}
  \rho * \mu:=\int_G R_{g^{-1}}^*\rho d\mu(g)\quad \text{and}\quad \tau * \mu:=\int_G  R_{g^{-1}}^*\tau d\mu(g).
\end{displaymath}

We will identify the orientations on $\ori(\g)$ with left-invariant orientations on $G$, and fix such an orientation. If $\mu$ is given by the form $\omega \in \Omega^n(G)$, then $\rho*\mu$ is the push-forward of $\tilde p_1^*\rho \wedge \tilde p_2^*\omega$ under the multiplication $G \times G \to G$, and $\tau * \mu$ is the push-forward of $\hat p_1^*\tau \wedge \hat p_2^*\omega$ under the induced map $\p_G \times G \to \p_G, (g,[\xi],h) \mapsto  R_h^*(g,[\xi])$. Since the Rumin differential and pull-back of differential forms commute with diffeomorphisms, we have $D(\tau * \mu)=D\tau * \mu$ and $\pi^*(\rho * \mu)=\pi^*\rho * \mu$.

\begin{Proposition} \label{prop_forms_convolution_non_inv}
	Let $\phi=\{c_\phi,\tau_\phi\}=[[\mu_\phi,\omega_\phi]] \in \mathcal V^\infty(G)$ with  $d\omega_\phi=D\omega_\phi$ and $\psi=\{c_\psi,\tau_\psi\}=[[\mu_\psi,\omega_\psi]] \in \mathcal V^\infty(G)$ with $d\omega_\psi=D\omega_\psi$ be smooth valuations, not necessarily left- or right-invariant.   
	Then 
	\begin{align*}
		\tau_{\phi * \psi} & =(-1)^n p_*r^*(p_1^*\tau_\phi \wedge p_2^*\tau_\psi)\\
		\phi * \psi & =[[\mu_\phi * \mu_\psi,\omega_\phi * \mu_\psi+ p_* r^*(p_1^*\tau_\phi \wedge p_2^*\omega_\psi)]].
	\end{align*}
\end{Proposition}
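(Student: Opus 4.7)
The strategy is to unwind the definition $\phi * \psi = m_*(\phi \boxtimes \psi)$, where $m: G \times G \to G$ is multiplication and $\boxtimes$ is the exterior product of valuations, and then to compute the push-forward explicitly through the geometric setup laid out at the start of this section. The hypothesis $d\omega_\phi = D\omega_\phi$ and $d\omega_\psi = D\omega_\psi$ means the Rumin correction vanishes, so we may replace $D$ by $d$ throughout and sidestep the combinatorial complications of the Rumin operator.

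First I would recall from \cite{alesker_bernig_convolution} the explicit formula for the exterior product $\phi \boxtimes \psi \in \mathcal V^{-\infty}(G \times G)$. In the $[[\mu,\omega]]$-representation, this valuation is carried by the product measure $\tilde p_1^*\mu_\phi \wedge \tilde p_2^*\mu_\psi$ on $G \times G$ together with a differential form on $\p_{G\times G}$ which, pulled back under the blow-up $F: \hat \p \to \p_{G \times G}$, is a signed sum of the two natural exterior kernels $\Phi^*(p_1^*\omega_\phi \wedge p_2^*\tau_\psi)$ and $\Phi^*(p_1^*\tau_\phi \wedge p_2^*\omega_\psi)$. By \cite[Lemma 5.2]{alesker_bernig_convolution}, the image of $dm^*$ is disjoint from the exceptional locus $\mathcal M$, so $F^{-1} \circ dm^* = \Phi^{-1} \circ r$ is well defined on $(G\times G)\times_{m,\pi}\p_G$, and the push-forward along $m$ of any form in the image of $F_*$ can be rewritten as a pull-back to $(G\times G)\times_{m,\pi}\p_G$ via $r$ followed by a push-forward along $p$.

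Next I would apply $m_*$ term by term. The measure component pushes to the ordinary measure convolution $\mu_\phi * \mu_\psi$. The kernel built from $p_1^*\tau_\phi \wedge p_2^*\omega_\psi$ pulls back under $r$ and pushes forward under $p$ to the stated term $p_* r^*(p_1^*\tau_\phi \wedge p_2^*\omega_\psi)$, while the kernel built from $p_1^*\omega_\phi \wedge p_2^*\tau_\psi$ has its $\tau_\psi$ factor integrated along the cosphere fibers via the factorization \eqref{eq_diffeo_fiberproduct_product}, turning it into the form-measure convolution $\omega_\phi * \mu_\psi$ and reproducing the definition of $\rho * \mu$ recorded just before the statement. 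This proves the second formula. The first formula for $\tau_{\phi * \psi}$ then follows from the identity $\tau = D\omega + \pi^*\mu$: applying $D = d$ to the $\omega$-part and adding $\pi^*(\mu_\phi * \mu_\psi)$, one may commute $d$ past $p_*$ and $r^*$ because the verticality hypothesis guarantees compatibility with the contact structure, and the result combines back into $(-1)^n p_* r^*(p_1^*\tau_\phi \wedge p_2^*\tau_\psi)$ after using $\tau_\psi = d\omega_\psi + \pi^*\mu_\psi$.

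The main obstacle will be the sign and orientation bookkeeping. Orientations on the fiber product $(G \times G)\times_{m,\pi}\p_G$, the preimage versus product orientation in \eqref{eq_diffeo_fiberproduct_product} (which disagree by $(-1)^n$), the blow-up $F$, and each intermediate pull-back and push-forward all carry sign conventions that must combine to produce exactly the factor $(-1)^n$ in the formula for $\tau_{\phi * \psi}$. A coordinate computation with left-invariant frames near $(e,e) \in G \times G$, together with the explicit formula $r(g,h,[\xi])=((g,[R_{h^{-1}}^*\xi]),(h,[L_{g^{-1}}^*\xi]))$, should pin down the sign and verify that the two descriptions of $\phi * \psi$ agree.
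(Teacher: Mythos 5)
Your overall strategy reverses the logical order of the paper's argument and, in doing so, walks into the two technical obstacles that the paper's proof is specifically designed to circumvent.

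The paper first establishes the formula for $\tau_{\phi*\psi}$ by invoking \cite[Proposition 5.5]{alesker_bernig_convolution}, which provides $a^*\tau_{\phi*\psi}=p_*r^*(p_1^*a^*\tau_\phi\wedge p_2^*a^*\tau_\psi)$; the factor $(-1)^n$ then drops out of the identity $p_*a^*=(-1)^n a^* p_*$ (the fiber of $p$ has dimension $n$). It then deduces the $[[\mu,\omega]]$-representation by a detour: given the candidate $\eta'$, one checks only that $\tau_{\eta'}=\tau_{\phi*\psi}$, so that $\phi*\psi-\eta'$ is a constant multiple of $\chi$, and one kills that constant by a support/partition-of-unity argument. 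In contrast, your plan is to \emph{first} compute the $[[\mu,\omega]]$-representation of $m_*(\phi\boxtimes\psi)$ ``term by term'' and \emph{then} obtain the $\tau$-formula by applying $D$.

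The first gap is in your proof of the second formula: what you describe as ``unwind the definition and compute $m_*$ term by term'' is precisely the content that \cite[Proposition 5.5]{alesker_bernig_convolution} packages for the $\tau$-form, and it is not a routine unwinding. The exterior product $\phi\boxtimes\psi$ is a \emph{generalized} valuation on $G\times G$; its form representation lives on the blow-up $\hat\p$ and does not split as cleanly into ``two natural exterior kernels'' as you write, and push-forward of generalized valuations along a non-proper map is delicate. Furthermore, even granting a form-level formula for $\tau_{\phi*\psi}$, one cannot read off the $[[\mu,\omega]]$-pair without facing the $\chi$-ambiguity: two pairs $(\mu,\omega)$ and $(\mu',\omega')$ give the same valuation precisely when $D\omega+\pi^*\mu=D\omega'+\pi^*\mu'$ and $\pi_*\omega=\pi_*\omega'$, so identifying the claimed representative requires exactly the support argument the paper supplies and you omit.

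The second gap is in the passage from the second formula to the first. To write $\tau_{\phi*\psi}=d\omega_{\phi*\psi}+\pi^*(\mu_\phi*\mu_\psi)$ you need $D\omega_{\phi*\psi}=d\omega_{\phi*\psi}$, i.e.\ that $d\bigl(\omega_\phi*\mu_\psi+p_*r^*(p_1^*\tau_\phi\wedge p_2^*\omega_\psi)\bigr)$ is vertical. The term $(d\omega_\phi)*\mu_\psi$ is vertical because $R_h$ is a contactomorphism, but verticality of $p_*r^*(p_1^*\tau_\phi\wedge p_2^*d\omega_\psi)$ has to be proven: it is a statement about how the contact structures on the three copies of $\p_G$ interact through $r$ and $p$, and it is not an immediate consequence of $\tau_\phi,d\omega_\psi$ being vertical individually. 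Your phrase ``the verticality hypothesis guarantees compatibility with the contact structure'' asserts this but gives no argument, and this is precisely the point where the paper's support-argument route saves effort. (If you do fill this gap, the sign computation does close correctly: using that $\tau_\phi$ is closed of degree $n$ and that $p_*r^*(p_1^*\tau_\phi\wedge p_2^*\pi^*\mu_\psi)=(-1)^n\tau_\phi*\mu_\psi$, one gets $d\omega_{\phi*\psi}+\pi^*(\mu_\phi*\mu_\psi)=(-1)^n p_*r^*(p_1^*\tau_\phi\wedge p_2^*\tau_\psi)$; but this depends on the verticality you have not established.)

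In short, the proposal identifies the right maps and the right pieces but leaves open exactly the two steps the paper's argument is built to avoid: a rigorous form-level computation of $m_*(\phi\boxtimes\psi)$, and the verticality of $d\omega_{\phi*\psi}$.
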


\proof
By \cite[Proposition 5.5]{alesker_bernig_convolution}, we have $a^*\tau_{\phi * \psi} =p_*r^*(p_1^*a^*\tau_\phi \wedge p_2^*a^*\tau_\psi)$, where $a:\p_G \to \p_G$ is the antipodal map. We can define antipodal maps on $\p_G \times \p_G$ and $(G \times G) \times_{m,\pi} \p_G$ in the natural way, and then $a$ commutes with $p_1 \times p_2,r,p$. Since the push-forward depends on the orientation of the fiber, which is of dimension $n$, we have $p_*a^*=(-1)^n a^*p_*$ and the first equation follows.

 For the second equation, we suppose first that $\mu_\psi=0$ and use an idea from \cite[Proposition 5.7]{alesker_bernig_convolution}. Let $\eta:=\phi * \psi$ and $\eta':=[[0,p_*r^*(p_1^*\tau_\phi \wedge p_2^*\omega_\psi)]]$. We want to prove that $\eta=\eta'$. We have
\begin{displaymath}
	\tau_{\eta'}=dp_*r^*(p_1^*\tau_\phi \wedge p_2^*\omega_\psi)=(-1)^n p_*r^*(p_1^*\tau_\phi \wedge p_2^*\tau_\psi)=\tau_\eta.
\end{displaymath}  
Hence $\tau_{\eta-\eta'}=0$, which means that $\eta-\eta'$ is a multiple of the Euler characteristic.  The support of $\eta$ is contained in $m(\spt \phi \times \spt \psi)$ by \cite[Proposition 5.5]{alesker_bernig_convolution}, and inspecting the definition of $\eta'$ we see that also $\spt \eta' \subset m(\spt \phi \times \spt \psi)$. If $m(\spt \phi \times \spt \psi) \neq G$, then we conclude that $\eta=\eta'$. The general case follows by using a partition of unity argument to reduce to the previous case. 

Let us finally consider the case $\mu_\psi \neq 0, \omega_\psi=0$. Then $\tau_\psi=\pi^*\mu_\psi$. The map
\begin{displaymath}
	(\mathrm{id} \times \pi) \circ r:(G \times G) \times_{m,\pi} \p_G \to \p_G \times G
\end{displaymath}
is a diffeomorphism whose inverse is given by $(g,[\xi],h) \mapsto (g,h,[R_h^*\xi])$. It is orientation preserving if and only if $n$ is even, see \eqref{eq_diffeo_fiberproduct_product}, hence the pull-back and the push-forward under the inverse map differ by a factor of $(-1)^n$. It follows that 
\begin{align*}
	\tau_{\phi * \psi} & = (-1)^n p_*r^*(p_1^*\tau_\phi \wedge p_2^* \pi^*\mu_\psi) \\
	& = (-1)^n p_* ((\mathrm{id} \times \pi) \circ r)^* (\hat p_1^*\tau_\phi \wedge \hat p_2^*\mu_\psi)\\
	& = (p \circ ((\mathrm{id} \times \pi) \circ r)^{-1})_* (\hat p_1^*\tau_\phi \wedge \hat p_2^*\mu_\psi).
\end{align*}
Note that $p \circ ((\mathrm{id} \times \pi) \circ r)^{-1}(g,[\xi],h)=\tilde R_h(g,[\xi])$, hence $\tau_{\phi * \psi}=\tau_\phi * \mu_\psi$. Now we argue as above. We let $\eta=\phi * \psi$ and $\eta':=[[\mu_\phi * \mu_\psi,\omega_\phi * \mu_\psi]]$. Then $\tau_{\eta'}=(\pi^*\mu_\phi+D\omega_\phi) * \mu_\psi=\tau_\phi * \mu_\psi=\tau_\eta$. Looking at the support of $\eta-\eta'$ as above shows that $\eta=\eta'$.
\endproof

\begin{Proposition} \label{prop_convolution_inv_forms}
Let $\tau \in \Omega^p_{\ori}(\p_G)^{L_G}, \zeta \in \Omega_{\ori}^q(\p_G)^{L_G}$, and \begin{displaymath}
	\theta:=p_*r^*(p_1^*\tau \wedge p_2^*\zeta) \in \Omega_{\ori}^{p+q-n}(\p_G)^{L_G}.
\end{displaymath} 
Then  
\begin{displaymath}
	\tilde \theta_j=(-1)^{nq} \int_G \Ad_g^*\tau dg * \zeta.
\end{displaymath} 
\end{Proposition}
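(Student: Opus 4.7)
The plan is to compute $\theta = p_*r^*(p_1^*\tau\wedge p_2^*\zeta)$ at points $(e,[\eta])\in\p_G$, using its left-invariance, via direct fiber integration. Working in the coordinates of \eqref{eq_diffeo_fiberproduct_product} and further left-trivializing both $\p_G$ factors, a direct computation shows that $r$ becomes $(g,h,[\eta])\mapsto\bigl((g,[\Ad_h^*\eta]),(h,[\eta])\bigr)$; in particular,
\[
 dr|_{(e,e,[\eta])}(X,Y,\alpha) = \bigl((X,\,Y^\sharp+\alpha),\,(Y,\alpha)\bigr).
\]
Moreover, the $G$-action $T_H(g,h,[\eta]):=(gH,H^{-1}h,[\eta])$ preserves the fibers of $p$, acting there by right translation $g\mapsto gH$, and a direct check gives $r\circ T_H=(R_H\times L_{H^{-1}})\circ r$.

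The first step is to reduce to the bi-invariant case for $\tau$. Since $\zeta$ is left-invariant and $T_H$ acts by right translation on the Haar-measure-equipped fibers of $p$ (preserving the Haar measure, as $G$ is unimodular), the $T_H$-equivariance yields $\theta(\tau,\zeta)=\theta(R_H^*\tau,\zeta)$. Averaging $H$ over $G$ and using $\Ad_g^*\tau=R_{g^{-1}}^*L_g^*\tau=R_{g^{-1}}^*\tau$ for left-invariant $\tau$, one gets $\theta(\tau,\zeta)=\theta\bigl(\int_G\Ad_g^*\tau\,dg,\,\zeta\bigr)$. Thus it suffices to prove $\theta=(-1)^{nq}(\tau*\zeta)$ for bi-invariant $\tau$, in the sense of Definition~\ref{def_convolution_of_forms}. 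For such $\tau$ the integrand of $p_*$ is $T_H$-invariant, hence constant on the fiber of $p$ over $(e,[\eta])$; the fiber integral reduces to the value at $g=e$ (since the Haar volume of $G$ is $1$).

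At $(e,e,[\eta])$, choosing horizontal lifts $(U,\beta)\mapsto(U,0,\beta)$ and basis $(e_m,-e_m,0)$ of the fiber tangents, $dr$ sends these to $\bigl((U,\beta),(0,\beta)\bigr)$ and $\bigl((e_m,-e_m^\sharp),(-e_m,0)\bigr)$, respectively. Expanding $p_1^*\tau\wedge p_2^*\zeta$ as a shuffle sum and decomposing $\tau=\sum_k\tau_k$, $\zeta=\sum_l\zeta_l$ by bidegree, the crucial observation is that each input to $\zeta$ is either of the form $(0,\beta_j)$ (from a horizontal position) or $(-e_m,0)$ (from a fiber position); hence, for a fixed bidegree $\zeta_l$, the $\p_+(\g^*)$-slots of $\zeta_l$ must be filled from horizontal positions and the $\g$-slots from fiber positions, which rigidly determines the splitting in terms of $l$. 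The fiber inputs assigned to $\tau$ contribute, via the pair $(e_m,-e_m^\sharp)$, contractions $\iota_{e_m^\sharp}$ and $\iota_{e_m^*}$ that, after passing to $\tilde\tau_k$ via the Hodge star \eqref{eq_def_hodge_star}, reassemble exactly into the operator $\hat S_{k+l-j}$ of Definition~\ref{def_convolution_of_forms}. The main obstacle is the careful bookkeeping of signs: three independent sources contribute --- the orientation factor $(-1)^n$ from \eqref{eq_diffeo_fiberproduct_product}, the shuffle signs in the expansion of $p_1^*\tau\wedge p_2^*\zeta$, and the signs from commuting contractions past the Hodge star (analogous to \eqref{eq:claim_partial}) --- and they should combine to yield precisely $(-1)^{nq}\epsilon_{k,l,j}^{p,q}$ after a lengthy but routine combinatorial check.
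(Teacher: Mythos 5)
Your overall strategy matches the paper's: reduce to bi-invariant $\tau$, then evaluate the fiber integral at a single point using the explicit form of $r$ in left-trivialized coordinates, and reassemble the shuffle sum into the operators $\hat S_{k+l-j}$. Your reduction step via the fiberwise action $T_H(g,h,[\eta])=(gH,H^{-1}h,[\eta])$, with $p\circ T_H=p$ and $r\circ T_H=(R_H\times L_{H^{-1}})\circ r$, is a slightly cleaner and more explicit packaging of what the paper does by ``averaging over $G$'' (the paper instead keeps the fiber integral over $g\in G$ and uses $\Ad$-invariance of $\tilde\tau_k$ to make the integrand $g$-independent); both are correct. The bidegree analysis --- $\zeta_l$ sees only vertical components from the lifts and only horizontal components from the fiber vectors, while the fiber vectors sent to $\tau_k$ split into a size-$(k+l-j)$ vertical part and a horizontal part --- is also exactly the paper's bookkeeping.

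The genuine gap is that you never actually compute the sign. You list the three sources (the $(-1)^n$ orientation of the identification \eqref{eq_diffeo_fiberproduct_product}, the shuffle signs, and the Hodge-star/contraction signs) and assert they ``should combine to yield precisely $(-1)^{nq}\epsilon^{p,q}_{k,l,j}$ after a lengthy but routine combinatorial check.'' But this sign is the entire content of the proposition: $\epsilon^{p,q}_{k,l,j}=(-1)^{(n+q)(n+p+l+j)+k(l+j+1)}$ is not something one can guess, and the associativity of the convolution (Proposition~\ref{prop:associativity}) and the form of Theorem~\ref{mthm:compact} hinge on getting it exactly right. The paper devotes the bulk of its proof to this bookkeeping, tracking a chain $\epsilon_1,\dots,\epsilon_6$ of sign factors and verifying $\epsilon_6=(-1)^{nq}\epsilon_{k,l,j}^{p,q}$ using $\epsilon(I,I^c)\epsilon(I^c,I)=(-1)^{(q-l)(n-q+l)}$ and $\epsilon(J,J^c)\epsilon(J^c,J)=(-1)^{l(j-l)}$. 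Without carrying out that calculation your argument establishes the formula only up to an undetermined sign depending on $(k,l,j,p,q,n)$, which is not a proof of the stated identity. Also beware that the push-forward convention produces $(-1)^{n(p+q-n)}$ (degree of the form times the fiber dimension), not merely the $(-1)^n$ from \eqref{eq_diffeo_fiberproduct_product}, and the two contributions should not be conflated.
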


\begin{proof}
We assume for simplicity that $G$ admits a bi-invariant orientation, which we fix to trivialize $\ori(\g)$. We first prove the statement under the additional assumption that $\tau$ is bi-invariant. Using our identifications, the map $r$ maps $(G \times G) \times_{m,\pi} \p_+(\mathfrak g^*)$ to $G \times \p_+(\mathfrak g^*) \times G \times \p_+(\mathfrak g^*)$. Because 
\begin{displaymath}
	dm_{g,h}(L_g X,L_h Y)=\left.\frac{d}{dt}\right|_{t=0}(gE^{tX}he^{tY})=L_{gh} (\Ad_{h^{-1}}X+Y)
\end{displaymath}
holds for any $X,Y\in\mathfrak g$, one has
\begin{displaymath}
	r(g, h,gh, [\xi])=(g, [\Ad^*_{h}\xi],h, [\xi]).
\end{displaymath}

We need to prove that 
\begin{equation} \label{eq:eta_goal}
	\tilde \theta_j=(-1)^{nq} \sum_{k+l \geq j} \epsilon_{k,l,j}^{p,q} \hat S_{k+l-j}(\tilde \tau_k \otimes \tilde \zeta_l).
\end{equation} 
By left invariance, it suffices to prove \eqref{eq:eta_goal} in a point, say $(e,[\xi])$.

Let us fix a basis $e_1,\dots,e_n$ of $\g$ satisfying $e_1\wedge\cdots\wedge e_n=1$ (recall that $\largewedge^n\g$ is identified with $\ori(\g)$ via $\vol_G$).  We have $p^{-1}(e,[\xi])=\{(g,g^{-1},e,[\xi])\mid g\in G\}$ and claim that
\begin{align}
\label{eq:Tspan}
  T_{(g, g^{-1},e,[\xi])} p^{-1}(e,[\xi])=\mathrm{Span}\{(\Ad_g^{-1}e_i,-e_i,0,0)\mid i=1,\dots, n\}.
\end{align}
Here and in what follows, the tangent space of $(G\times G)\times_{m,\pi} (G\times \mathbb P_+(\mathfrak g^*))$ in a point $(g, g^{-1},e,[\xi])$ is identified with $\mathfrak g \oplus \mathfrak g \oplus \mathfrak g \oplus T_{[\xi]}\p_+(\mathfrak g^*)$ via $dL_g\oplus dL_{g^{-1}}\oplus\id\oplus\id$. To see \eqref{eq:Tspan}, consider a curve $c$ in $G$ with $c(0)=e$ and $c'(0)=e_i$. Then the curve 
\begin{displaymath}
	t \mapsto  (g \Ad_{g^{-1}}c(t),g^{-1} c(t)^{-1},e,[\xi])
\end{displaymath}
is a curve in $p^{-1}(e,[\xi])$, and its derivative at $t=0$ is $(\Ad_g^{-1}e_i,-e_i,0,0)$. The image under $dr$ of this vector is 
\begin{displaymath}
  \widehat e_i:= (\Ad_{g^{-1}}e_i,-d\Ad_{g^{-1}}^* e_i^\sharp|_{[\xi]},-e_i,0).
\end{displaymath}
Here, similarly as before, the tangent space of $G \times \p_+(\mathfrak g^*) \times G \times \p_+(\mathfrak g^*)$ in a point $(g,[\Ad^*_{g^{-1}}\xi],g^{-1},[\xi])$ is identified with $\mathfrak g\oplus T_{[\Ad^*_{g^{-1}}\xi]}\p_+(\mathfrak g^*) \oplus\mathfrak g\oplus T_{[\xi]}\p_+(\mathfrak g^*)$.

Fix $x_1,\dots,x_{p+q-n-j}\in\mathfrak g$ which we identify with left-invariant vector fields on $G$. Then  the vector
\begin{displaymath}
  \tilde x_i:=(\Ad_{g^{-1}}x_i,0,x_i,0)\in T_{(g, g^{-1},e,[\xi])}(G\times G)\times_{m,\pi} (G\times \mathbb P_+(\mathfrak g^*))
\end{displaymath}
is a lift of $x_i$, i.e., satisfies $dp(\tilde x_i)=(x_i,0)$. This can be seen by considering the curve
\begin{displaymath}
  (g \Ad_{g^{-1}}c(t),g^{-1},c(t),[\xi])\in (G\times G)\times_{m,\pi} (G\times \mathbb P_+(\mathfrak g^*))
\end{displaymath}
for a curve $c(t)\in G$ with $c(0)=e$ and $c'(0)=x_i$. We will denote
\begin{displaymath}
	\widehat x_i:=dr(\tilde x_i)=(\Ad_{g^{-1}}x_i,0,0,0).
\end{displaymath}
Further, choose any $u_1,\dots, u_j\in T_{[\xi]} \p_+(\mathfrak g^*)$. Similarly as before, $\tilde u_i=(0,0,0,u_i)$ satisfies $dp(\tilde u_i)=(0,u_i)$ and we denote
$$\widehat u_i:=dr(\tilde u_i)=(0, d\Ad _{g^{-1}}^*u_i, 0, u_i).$$
			
Let us denote $\theta^{k,l}_j:=(p_*r^*(p_1^*\tau_k \wedge p_2^*\zeta_l))_j \in\Omega^j(\mathbb P_+(\mathfrak g^*), \largewedge^{p+q-n-j} \mathfrak g^*)$ where we decompose, as usual, $\tau=\sum \tau_k$ and $\zeta=\sum\zeta_l$ but at the same time we consider $\tau_k$ and $\zeta_l$ as left-invariant forms on $G\times \mathbb P_+(\mathfrak g^*)$. Then
\begin{align*}
\Theta_{j}^{k,l}&:=\theta_j^{k,l}(u_1,\dots, u_{j})(x_1,\dots, x_{p+q-n-j})\\
&= p_*r^*(p_1^*\tau_k\wedge p_2^*\zeta_l)(u_1,\dots, u_{j},x_1,\dots, x_{p+q-n-j})\\
&= (-1)^{n(p+q-n)} \int_G p_1^*\tau_k\wedge p_2^*\zeta_l(\widehat e_1,\dots, \widehat e_n, \widehat u_1,\dots, \widehat u_{j},\widehat x_1,\dots, \widehat x_{p+q-n-j}).
\end{align*}  

 Let us call a vector $(v,w) \in \mathfrak g \oplus T_{[\xi]}\p_+(\mathfrak g^*)$ horizontal if $w=0$ and vertical if $v=0$. Then $p_2 \hat e_i$ is horizontal, $p_2\hat u_i$ is vertical and $p_2\hat x_i=0$. Since $\zeta_l$, considered as a form on $G \times \p_+(\mathfrak g^*)$, is of bidegree $(q-l,l)$, one has
\begin{align*}
\Theta_j^{k,l}
&=\sum_{I,J} \epsilon_1 \int_G p_1^*\tau_k(\widehat e_{I^c},\widehat u_{J^c},\widehat x_M)\,p_2^*\zeta_l(\widehat e_I, \widehat u_J)\\
&=\sum_{I,J} \epsilon_2 \int_G p_1^*\tau_k(\widehat e_{I^c},\widehat u_{J^c},\widehat x_M)\,p_2^*\zeta_l(\widehat u_J,\widehat e_I),
\end{align*} 
where the sum is over subsets $I\subset\{1,\dots,n\}$ and $J\subset\{1,\dots,j\}$ such that $|I|=q-l$ and $|J|=l$, and where we denote $M:=\{1\dots,p+q-n-j\}$ and $\epsilon_1:=(-1)^{n(p+q+1)} \cdot (-1)^{l(n+q+1)+pq}\epsilon(I,I^c)\epsilon(J,J^c), \epsilon_2:=(-1)^{(q-l)l} \epsilon_1$.

Similarly, $p_1\hat x_i$ is horizontal, $p_1\hat u_i$ is vertical, while $p_1\hat e_i$ is the sum of a horizontal and a vertical part: $p_1\hat e_i=(p_1\hat e_i)^h + (p_1\hat e_i)^v$. Since $\tau_k$ is of bidegree $(p-k,k)$, we obtain
\begin{align*}
\Theta_j^{k,l}&=\sum_{I,J} \sum_K \epsilon_3 \int_G \tau_k((p_1 \widehat e_K)^v,p_1 \widehat u_{J^c},(p_1 \widehat e_{I^c\setminus K})^h,p_1\widehat x_M)\,p_2^*\zeta_l(\widehat u_J,\widehat e_I),
\end{align*}
where the inner sum runs over subsets $K\subset I^c$ with $|K|=k+l-j$ and $\epsilon_3:=(-1)^{(j-l)(n-q+j-k)} \epsilon(K,I^c\setminus K) \epsilon_2$. Explicitly, plugging the vectors $\widehat e_i,\widehat u_i,\widehat x_i$, we get that $\Theta_j^{k,l}$ equals
\begin{align*}
&\sum_{I,J,K} \epsilon_4 \int_G \tau_k(d\Ad_{g^{-1}}^* e_K^\sharp,d\Ad _{g^{-1}}^*u_{J^c})(\Ad_{g^{-1}}e_{I^c\setminus K},\Ad_{g^{-1}}x_M)\,\zeta_l(u_J)(e_I)\\
&=\sum_{I,J,K} \epsilon_4 \tau_k(e_K^\sharp,u_{J^c})(e_{I^c\setminus K},x_M)\,\zeta_l(u_J)(e_I),
\end{align*}
where we used the bi-invariance of $\tau$ (which is equivalent to $\Ad$-invariance of each $\tilde \tau_k$) and put $\epsilon_4:=(-1)^{k-j+q} \epsilon_3$. 
 
Let us write $x_M:=x_1\wedge\cdots\wedge x_{n-j}$ and similarly for $e_i$. Using the identification $\largewedge^n\g\cong\mathbb C$ and  Equation \eqref{eq_star_explicit} we thus have 
\begin{displaymath}
\tilde\theta^{k,l}_j(u_1,\dots,u_j)\wedge x_M=\sum_{I,J,K} \epsilon_4  \left[\iota_{e_K^\sharp}\tilde\tau_k(u_{J^c})\wedge e_{I^c\setminus K}\wedge x_M\right]\cdot\left[\tilde\zeta_l(u_J)\wedge e_I\right].
\end{displaymath} 
Denoting $\tilde\zeta_l(u_J)=\sum_{|A|=n+l-q}\zeta_{l,A}e_A$, one has
\begin{displaymath}
\tilde\zeta_l(u_J)\wedge e_I=\zeta_{l,I^c}e_{I^c}\wedge e_I=\zeta_{l,I^c}\epsilon(I^c,I).
\end{displaymath}
Observe also that
\begin{displaymath}
i_{e_K^*}e_{I^c}=\epsilon(K, I^c\setminus K)e_{I^c\setminus K}
\end{displaymath}
whenever $K\subset I^c$ and $i_{e_K^*}e_{I^c}=0$ otherwise. Altogether, we compute
\begin{align*}
\tilde\theta^{k,l}_j(u_1,\dots,u_j)\wedge x_M & =\sum_{J,K}\epsilon_5  \iota_{e_K^\sharp}\tilde\tau_k (u_{J^c})\wedge \sum_I \zeta_{l,I^c} \cdot i_{e_K^*}e_{I^c}\wedge x_M\\
&=\sum_{J,K}\epsilon_5  \iota_{e_K^\sharp}\tilde\tau_k(u_{J^c})\wedge  i_{e_K^*}\tilde\zeta_l(u_J)\wedge x_M\\
&=\sum_{K} \epsilon_6  \left(\iota_{e_K^\sharp}\tilde\tau_k \wedge  i_{e_K^*}\tilde\zeta_l\right)(u_1,\dots,u_j)\wedge x_M,
\end{align*}
where the summations over $K$ are now over \emph{all} subsets of $\{1,\dots,n\}$ such that $|K|=k+l-j$ and $\epsilon_5=\epsilon(K,I^c \setminus K) \epsilon(I^c,I) \epsilon_4$, $\epsilon_6=\epsilon(J^c,J)\epsilon_5$. Since this holds for arbitrary vector fields $x_i$ and $u_i$, we get
\begin{align*}
\tilde\theta^{k,l}_j=\epsilon_6 \sum_{K} \iota_{e_K^\sharp}\tilde\tau_k \wedge  i_{e_K^*}\tilde\zeta_l=\epsilon_6 \hat S_{k+l-j}(\tilde \tau_k \otimes \tilde \zeta_l).
\end{align*}

Using $\epsilon(I,I^c)\epsilon(I^c,I)=(-1)^{(q-l)(n-q+l)}$ and $\epsilon(J,J^c)\epsilon(J^c,J)=(-1)^{l(j-l)}$, one easily checks that $\epsilon_6=(-1)^{nq} \epsilon_{k,l,j}^{p,q}$. Summing over $k,l$ and observing that $\tilde\theta^{k,l}_j=0$ if $k+l<j$ proves \eqref{eq:eta_goal}. 

The general case follows by averaging over $G$, noting that for left-invariant forms $\tau,\zeta$ we have $(\Ad_g^*\tau )* \zeta=\tau * \zeta$. 
\end{proof}

\proof[Proof of Theorem \ref{mthm:compact}] 
Let $\phi=\{c_\phi,\tau_\phi\} \in \mathcal V^\infty(G)^{G \times G}$ and $\psi=\{c_\psi,\tau_\psi\}=[[\mu_\psi,\omega_\psi]] \in \mathcal V^\infty(G)^{L_G}$ with $d\omega_\psi=D\omega_\psi$. By compactness, we may assume that $\omega_\psi$ is left-invariant. Combining Proposition \ref{prop_forms_convolution_non_inv} and Proposition \ref{prop_convolution_inv_forms} with $p=q=n$ we find
$$
\tau_{\phi * \psi}=(-1)^n p_*r^*(p_1^*\tau_\phi \wedge p_2^*\tau_\psi)=\tau_\phi * \tau_\psi.
$$
Similarly, using Proposition \ref{prop_forms_convolution_non_inv}, Proposition \ref{prop_convolution_inv_forms} with $p=n, q=n-1$, and $\mu_\psi=\mu(\psi) \vol_G$, where $\vol_G$ is the Haar probability measure, gives
\begin{align*}
  c_{\phi * \psi} & =\pi_*(\omega_{\phi*\psi})\\
  &  =\pi_*(\omega_\phi * \mu_\psi+p_*r^*(p_1^*\tau_\phi \wedge p_2^*\omega_\psi))\\
    & =c_\phi \mu(\psi) + \pi_* (\tau_\phi * \omega_\psi).
\end{align*}
\endproof			

Let us finish this section with a basic relation between the two natural pairings induced by the product and convolution.

\begin{Lemma}\label{lem:convolution_pairing}
	For $\phi,\psi\in \mathcal V^\infty(G)$ one has
\begin{displaymath}
	\int_G (\phi\cdot \psi)=\phi\ast \mathrm{Inv}^*\psi(e).
\end{displaymath}	
\end{Lemma}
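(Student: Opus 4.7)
The plan is to show that both sides of the identity equal $(\phi\boxtimes\psi)(\Delta(G))$, where $\Delta\colon G\to G\times G$ is the diagonal embedding, thereby rephrasing the lemma as an equality between two ways of extracting this number from the exterior product.

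For the left-hand side, recall that the Alesker product is defined by restricting the exterior product to the diagonal: $\phi\cdot\psi=\Delta^*(\phi\boxtimes\psi)$. Since $\Delta$ is a closed embedding, pull-back of valuations amounts to restriction of the argument, $(\Delta^*\eta)(K)=\eta(\Delta(K))$. Evaluating at $K=G$ gives
\begin{align*}
	\int_G\phi\cdot\psi=(\phi\cdot\psi)(G)=(\phi\boxtimes\psi)(\Delta(G)).
\end{align*}

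For the right-hand side, I first use the naturality of the exterior product under the involutive diffeomorphism $\mathrm{id}\times\inv$ of $G\times G$ to write $\phi\boxtimes\inv^*\psi=(\mathrm{id}\times\inv)^*(\phi\boxtimes\psi)$. Combined with the convolution formula $\phi*\psi'=m_*(\phi\boxtimes\psi')$ and functoriality of push-forward along compositions, this yields
\begin{align*}
	\phi*\inv^*\psi=m_*(\mathrm{id}\times\inv)^*(\phi\boxtimes\psi)=\mu_*(\phi\boxtimes\psi),
\end{align*}
where $\mu\colon G\times G\to G$ is the smooth map $\mu(g,h)=gh^{-1}$, which satisfies $\mu^{-1}(e)=\Delta(G)$. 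The crucial step is then the fiber identity
\begin{align*}
	\mu_*(\phi\boxtimes\psi)(\{e\})=(\phi\boxtimes\psi)(\mu^{-1}(e))=(\phi\boxtimes\psi)(\Delta(G)).
\end{align*}
To make this precise I would factor $\mu=p_1\circ\beta^{-1}$ using the diffeomorphism $\beta\colon G\times G\to G\times G$, $(g,h)\mapsto(gh,h)$, which sends $\{e\}\times G$ onto $\Delta(G)$ and satisfies $\mu\circ\beta=p_1$, the projection to the first factor. Then $\mu_*=p_{1*}\circ\beta^*$, and the Fubini-type formula for push-forward along the trivial fibration $p_1$, $p_{1*}(\gamma)(\{y\})=\gamma(\{y\}\times G)$, together with the naturality of pull-back under the diffeomorphism $\beta$, gives the desired identity.

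The main obstacle lies in justifying the Fubini-type fiber formula for $p_{1*}$ acting on the \emph{generalized} valuation $\beta^*(\phi\boxtimes\psi)$. Geometrically this is nothing more than saying that push-forward evaluated at a point integrates over the fiber, but it needs to be unwound through Alesker's construction of push-forward via fiber integration of the defining forms on the cosphere bundles; since $\beta$ is a diffeomorphism and $p_1$ is a trivial fibration, no new wavefront set issues arise beyond those already controlled by the existence of the convolution $\phi*\inv^*\psi$.
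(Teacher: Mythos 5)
Your proof is correct and follows essentially the same approach as the paper's. Both sides are shown to equal $(\phi\boxtimes\psi)(\Delta(G))$, and the crux in both is the fiber formula for push-forward of valuations evaluated at a point; the paper invokes this directly for $m$ via $m^{-1}(e)=(\id\times\inv)(\Delta(G))$, whereas you first absorb $\inv$ into the map to get $\mu(g,h)=gh^{-1}$ and then factor $\mu=p_1\circ\beta^{-1}$ to reduce to a trivial fibration --- a somewhat more explicit justification of the same fiber identity, but the same argument at heart.
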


\proof
Let $\Delta:G\to G\times G$ denote the diagonal embedding. Then 
\begin{align*}
	\phi\ast \mathrm{Inv}^*\psi(e)&= (\phi\boxtimes\mathrm{Inv}^*\psi)(m^{-1}(e))\\
	&=(\phi\boxtimes\mathrm{Inv}^*\psi)( (\id\times \mathrm{Inv})\circ \Delta(G))
	\\&=(\phi\boxtimes \psi) (\Delta(G))\\
	&= (\phi\cdot \psi)(G).
\end{align*}
\endproof

\section{Existence of bi-invariant valuations}
\label{sec:existence}

A well-known result of Milnor \cite[Lemma 7.5]{milnor76} asserts that a connected Lie group admits a bi-invariant riemannian metric if and only if it is the cartesian product of a compact Lie group and a vector space. The intrinsic volumes (or Lipschitz--Killing valuations) of such a metric then constitute examples of smooth bi-invariant valuations, as is the case for the coefficients $\phi_k$ of the Taylor series for the volume of an $\epsilon$-tube of $X\in\mathcal P(G)$, $\vol(X_\epsilon)=\sum_{k=0}^\infty \phi_k(X)\epsilon^k$. We will now show that in fact there are no other groups admitting a bi-invariant smooth valuation distinct from linear combinations of the Euler characteristic and the Haar measure.

\begin{Lemma}\label{lem:zero_implies_infinite}
	Let $G$ be any real Lie group and $V$ a finite-dimensional representation of $G$. If $v\in V$ is non-zero and $Gv$ has $0$ as its limit point, then $Gv$ is unbounded.
\end{Lemma}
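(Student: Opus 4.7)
I plan to argue by contradiction: assume $Gv$ is bounded and deduce that $0$ cannot be a limit point of $Gv$. The core idea is to upgrade the boundedness of a single orbit to boundedness of $\rho(G)$ in $\mathrm{End}(V)$, which forces $G$ to act through a compact group preserving some inner product. Then $|gv|$ is constant in $g$, so $0$ cannot be approached.

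More precisely, I would first introduce the subset $W=\{w\in V:Gw\text{ is bounded}\}\subset V$. A quick check shows $W$ is a linear subspace (since $|g(w_1+w_2)|\leq|gw_1|+|gw_2|$) and $G$-invariant ($G(hw)=Gw$ for $h\in G$). By assumption $v\in W$, so after replacing $V$ by $W$ I may assume that every orbit of $G$ on $V$ is bounded. Picking a basis $v_1,\dots,v_m$ of $V$, the boundedness of each $Gv_i$ bounds the matrix entries of $\rho(g)$ uniformly in $g\in G$, so $\rho(G)$ is a bounded subset of $\mathrm{End}(V)$.

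Next I would argue that the closure $K:=\overline{\rho(G)}\subset\mathrm{End}(V)$ is a compact subgroup of $\mathrm{GL}(V)$. Compactness is immediate in finite dimensions. To see that $K\subset\mathrm{GL}(V)$, take $g_n\in\rho(G)$ with $g_n\to g$; since $g_n^{-1}\in\rho(G)$ is also bounded, a subsequence converges to some $g'\in\mathrm{End}(V)$, and continuity of multiplication gives $gg'=g'g=\mathrm{id}$, so $g$ is invertible. The same argument shows $K$ is closed under inversion, hence a subgroup. Averaging any inner product on $V$ over the Haar measure of $K$ then produces a $K$-invariant inner product $\langle\cdot,\cdot\rangle$, and $K\subset O(V,\langle\cdot,\cdot\rangle)$.

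Finally, every $g\in G$ satisfies $|gv|=|v|$, so the orbit $Gv$ lies on the sphere of radius $|v|>0$, contradicting the assumption that $0$ is a limit point of $Gv$. The only slightly delicate step is verifying that the closure of $\rho(G)$ remains inside $\mathrm{GL}(V)$, but as indicated this follows from the symmetry of the boundedness hypothesis under inversion; everything else is standard finite-dimensional linear algebra and the averaging trick.
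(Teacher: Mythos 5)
Your proposal is correct, and while it shares the paper's central insight---that a bounded orbit forces a $G$-invariant Euclidean structure, which then fixes $|gv|=|v|$ and excludes $0$ from $\overline{Gv}$---the route to that invariant structure is genuinely different. The paper works inside $Z$, the smallest $G$-invariant affine (hence, as $0\in\overline{Gv}$, linear) subspace containing $v$; there $\overline{\mathrm{conv}(Gv)}$ is a full-dimensional compact convex body, and the John ellipsoid of its symmetrization immediately hands over a $G$-invariant norm, with no integration. You instead pass to the (potentially larger) $G$-invariant subspace $W$ of all vectors with bounded orbit, deduce that $\rho(G)$ is bounded in $\mathrm{End}(W)$, show that $\overline{\rho(G)}$ is a compact subgroup of $\mathrm{GL}(W)$, and average an inner product over its Haar measure. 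Your argument is the textbook ``bounded finite-dimensional representation is unitarizable'' argument; it has the structural advantage of explicitly exhibiting the compact group through which $G$ acts, at the cost of invoking Haar measure. The paper's John-ellipsoid trick is more economical: it works directly with the convex geometry of the single orbit of interest and needs no measure theory. Both are standard and both are correct; the key verifications in your version (that $W$ is a $G$-invariant subspace, that boundedness of the basis orbits controls all matrix entries, and that $\overline{\rho(G)}$ lands inside $\mathrm{GL}(W)$ and is a subgroup) all check out.
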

\proof
Let $Z\subset V$ be the smallest $G$-invariant affine subspace containing $v$. As $0$ is a limit point of $Gv$, $Z$ is linear. Assuming $Gv$ is bounded, we conclude that the closure of the convex hull of $Gv$, denoted $K$, is a compact convex body in $Z$, which by construction has full dimension in $Z$. The John ellipsoid of the convex hull of $K \cup (-K)$ then defines a $G$-invariant euclidean norm $x\mapsto |x|$ in $Z$. Since $|v|\neq 0$, it cannot happen that $g_jv\to 0$ for a sequence $g_j\in G$, as $|g_jv|=|v|>0$. This contradiction completes the proof.
\endproof

 \begin{Lemma}\label{lem:unbounded}
 Let $G$ be any real Lie group and $V$ a finite-dimensional representation of $G$. If $V$ has no invariant euclidean structure, then there is an open dense set of vectors $v\in V$ with $Gv$ unbounded.
 \end{Lemma}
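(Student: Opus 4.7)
The plan is to prove this by identifying the set $V_0 := \{v\in V:Gv\text{ is bounded}\}$ as a proper $G$-invariant linear subspace of $V$. In a finite-dimensional vector space, any proper linear subspace has dense open complement, which immediately delivers the conclusion since $V\setminus V_0$ is exactly the set of vectors with unbounded orbit.

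The first step is to verify that $V_0$ is a linear subspace. Fix any norm $\|\cdot\|$ on $V$. From $\|g(v+w)\|\leq \|gv\|+\|gw\|$ and $\|g(\lambda v)\|=|\lambda|\|gv\|$ it follows immediately that sums and scalar multiples of vectors with bounded orbits again have bounded orbits. The $G$-invariance of $V_0$ is trivial since $G(gv)=Gv$.

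The second, more substantive step is to show that if $V_0\neq 0$, then $V_0$ admits a $G$-invariant euclidean structure, by the same John-ellipsoid argument used in Lemma \ref{lem:zero_implies_infinite}. Concretely, I would pick a basis $v_1,\dots,v_k$ of $V_0$ and set $K_i:=\overline{\mathrm{conv}}(Gv_i\cup (-Gv_i))$; each $K_i$ is compact (since $Gv_i$ is bounded), centrally symmetric, convex, and $G$-invariant. Their Minkowski sum $K:=K_1+\cdots+K_k$ is then also compact, centrally symmetric, convex, and $G$-invariant, and since it contains $\pm v_i$ for every $i$, it has nonempty interior inside $V_0$. The John ellipsoid of $K$ is therefore a $G$-invariant full-dimensional ellipsoid in $V_0$, which is precisely a $G$-invariant inner product on $V_0$.

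The final step is to combine these ingredients: by hypothesis $V$ itself carries no $G$-invariant euclidean structure, so the previous step forces $V_0\subsetneq V$. Being a proper linear subspace of the finite-dimensional space $V$, $V_0$ is closed with open dense complement. I do not expect any real obstacle here; the only point worth checking carefully is that the symmetrized Minkowski sum $K$ genuinely has full dimension in $V_0$, but this is clear from the fact that it already contains the basis $\{v_1,\dots,v_k\}$ together with $0$.
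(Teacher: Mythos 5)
Your proof is correct. The explicit observation that $V_0:=\{v\in V:Gv\text{ bounded}\}$ is a $G$-invariant linear subspace (via the triangle inequality) is a clean structural way to organize the argument, and the Minkowski-sum construction $K_1+\cdots+K_k$ correctly produces a full-dimensional, compact, centrally symmetric, $G$-invariant body in $V_0$ to feed into the John-ellipsoid uniqueness argument. The paper's proof reaches the same conclusion by an extremal variant of the same idea: it takes a $G$-invariant centrally symmetric body $K$ of \emph{maximal} dimension, notes $\dim K<\dim V$ by the John-ellipsoid argument, and shows that any $v\notin\operatorname{span}K$ with bounded orbit would let one enlarge $K$, a contradiction; the open dense set is then $V\setminus\operatorname{span}K$. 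Both arguments hinge on the same two ingredients (bounded orbits generate invariant symmetric bodies; a full-dimensional such body yields an invariant euclidean structure via the John ellipsoid), so this is essentially the paper's approach, though your formulation makes the linear-subspace structure of the bounded-orbit locus explicit rather than leaving it implicit in the maximality argument.
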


 \proof 
Take a $G$-invariant centrally symmetric convex body $K$ of maximal dimension. Then $\dim K < \dim V$, since otherwise the John ellipsoid of $K$ would define a $G$-invariant euclidean structure on $V$. Suppose that a vector $v \in V \setminus \mathrm{span} K$ has a bounded orbit. Then the convex hull of the closure of $Gv \cup K \cup (-Gv)$ is a centrally symmetric, $G$-invariant convex body of dimension strictly larger than $\dim K$, which is a contradiction. 
 \endproof

\begin{Lemma}\label{lem:away_from_zero}
		Let $G$ be a connected real Lie group and $\mathfrak g$ its Lie algebra.  Then $\Ad_G v$ does not contain $0$ in its closure for some open dense set $U$ of vectors $v\in\mathfrak g$.
\end{Lemma}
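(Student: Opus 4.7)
The plan is to construct, in each of two complementary cases, a continuous $\Ad_G$-invariant function $f\colon\mathfrak g\to\RR$ with $f(0)=0$ such that the open set $U:=\{f\neq 0\}$ is dense in $\mathfrak g$. For any $v\in U$, invariance forces $f\equiv f(v)\neq 0$ on $\Ad_G v$ and, by continuity, on its closure; since $f(0)=0$, that closure cannot contain the origin, which is exactly what is to be shown.

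\textbf{Case 1: $\mathfrak g$ solvable.} The key observation is that $\Ad_G$ acts trivially on the abelianization $\mathfrak g/[\mathfrak g,\mathfrak g]$: each $\ad_X$ has image in $[\mathfrak g,\mathfrak g]$, so $\Ad_{\exp X}=\exp\ad_X$ descends to the identity on the quotient, and connectedness of $G$ extends this to all of $\Ad_G$. I would take $f(v):=|p(v)|^2$, where $p:\mathfrak g\to\mathfrak g/[\mathfrak g,\mathfrak g]$ is the projection and $|\cdot|$ any norm. Solvability (with $\mathfrak g\neq 0$) forces $[\mathfrak g,\mathfrak g]\subsetneq\mathfrak g$, so $U=\mathfrak g\setminus[\mathfrak g,\mathfrak g]$ is open dense.

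\textbf{Case 2: $\mathfrak g$ not solvable.} Let $\mathfrak r\subsetneq\mathfrak g$ be the radical. The quotient $\mathfrak s:=\mathfrak g/\mathfrak r$ is nonzero and semisimple, and the $\Ad_G$-action on $\mathfrak g$ descends along $\pi:\mathfrak g\to\mathfrak s$ to an action by Lie algebra automorphisms of $\mathfrak s$. Since the Killing form $B$ of $\mathfrak s$ is nondegenerate and preserved by any automorphism, the polynomial $f(v):=B(\pi(v),\pi(v))$ is $\Ad_G$-invariant and vanishes at the origin. Nondegeneracy makes $\{B(x,x)\neq 0\}\subset\mathfrak s$ the complement of a proper real algebraic subvariety, hence open dense in $\mathfrak s$, and pulling back under the surjective (hence open) linear map $\pi$ yields an open dense set $U\subset\mathfrak g$.

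The main point requiring a little care is the density statement in Case 2: one needs that the preimage of a dense set under a surjective linear map is dense, which is an easy consequence of openness of the map (any nonempty open $V\subset\mathfrak g$ has $\pi(V)$ open and nonempty, hence intersecting $\{B\neq 0\}$). Everything else reduces to two standard structural inputs---that $\Ad_G$ acts trivially on $\mathfrak g/[\mathfrak g,\mathfrak g]$ for any connected $G$, and the Levi radical decomposition together with Cartan's nondegeneracy of the Killing form on a semisimple Lie algebra.
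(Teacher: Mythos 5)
Your proof is correct, and its core strategy matches the paper's: produce a continuous $\Ad_G$-invariant function $f$ on $\mathfrak g$ with $f(0)=0$ whose nonvanishing locus is open dense, so that by invariance and continuity no orbit through that locus can have $0$ in its closure. The only difference is the choice of quotient of $\mathfrak g$ used to build $f$. The paper fixes a single \emph{maximal proper ideal} $\mathfrak h\subset\mathfrak g$; the quotient $\mathfrak g/\mathfrak h$ is automatically either one-dimensional abelian (use the linear projection) or simple (use the Killing form). You instead split into solvable versus non-solvable and use the full abelianization $\mathfrak g/[\mathfrak g,\mathfrak g]$ in the first case and the semisimple quotient $\mathfrak g/\mathfrak r$ by the radical in the second. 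Both routes invoke Cartan's nondegeneracy of the Killing form and the fact that $\Ad_G$ acts trivially modulo the commutator ideal. The paper's route is marginally leaner (one ideal, no appeal to the radical or Levi theory, and the simple-or-$\R$ dichotomy for a maximal proper ideal is immediate); yours carries a bit more structural machinery but is equally rigorous, and the density and invariance checks in your write-up are accurate.
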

\proof
Choose a maximal proper ideal $\mathfrak h\subset\mathfrak g$, so that $\mathfrak a=\mathfrak g/\mathfrak h$ is either simple or abelian and one-dimensional. Let $\pi:\mathfrak g\to\mathfrak a$ be the projection.

In case $\mathfrak a=\R$, it holds for all $u,v\in\mathfrak g$ that $[u+\mathfrak h, v+\mathfrak h]=0$, so that $[u,v]\in\mathfrak h$. Since $G$ is connected, we have $\pi(\Ad_g v)=\pi(v)$ for all $v\in \mathfrak g$, $g\in G$, and so we may take $U=\{v\mid \pi(v)\neq 0\}=\mathfrak g \setminus \mathfrak h$.

In case $\mathfrak a$ is simple, let $B$ be its Killing form, which is non-degenerate. Since $\pi([x,v])=[\pi(x), \pi(v)]$ for all $x, v \in\g$, it holds that
$$B(\pi(\Ad_g v))=B(\Ad_g(\pi( v)))=B(\pi(v))$$
for all $g\in G$, $v\in\mathfrak g$, and therefore we may take $U=\{v\mid  B(\pi(v))\neq 0\}$.
\endproof

We can now proceed to prove the first part of Theorem \ref{mainthm_existence_biinvariant}. In what follows, $\vol_G$ denotes some non-zero left-invariant Haar measure on $G$.

\begin{Theorem}\label{prop:no_biinvariant}
	Let $G$ be a connected real Lie group of dimension at least $2$. Then $G$ admits a smooth bi-invariant valuation $\phi\notin\Span\{\chi, \vol_G\}$ if and only if $G$ admits a bi-invariant riemannian metric. 
\end{Theorem}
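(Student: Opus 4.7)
The easy direction is standard: if $G$ carries a bi-invariant Riemannian metric, then the first intrinsic volume $\mu_1$ is a smooth bi-invariant valuation of degree $1$, which is obviously not in $\Span\{\chi,\vol_G\}$.

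For the harder implication, the plan is to reduce to a translation-invariant question on $\g$ and then use the three preceding lemmas. Suppose $\phi\in\calV^\infty(G)^{G\times G}\setminus\Span\{\chi,\vol_G\}$. Using the Alesker filtration $\calV^\infty(G)=\calW_0\supset\cdots\supset\calW_n$ together with the isomorphism $\gr\calV^\infty(G)\cong\Gamma(G,\Val^\infty(TG))$, a bi-invariant section is determined by its value at $e$: left-invariance identifies it with an element of $\Val^\infty_k(\g)$, while right-invariance at $e$ upgrades this to $\Ad_G$-invariance. The piece in degree $0$ exhausts $\CC\cdot\chi$, and the piece in degree $n$ is a bi-invariant measure (a multiple of $\vol_G$ if $G$ is unimodular, otherwise zero). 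Subtracting these obvious components yields a non-obvious remainder whose principal symbol produces a non-zero $\psi\in\Val^\infty_k(\g)^{\Ad_G}$ for some $0<k<n$.

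The main task is then to show that the existence of such $\psi$ forces $\Ad_G$ to preserve an inner product on $\g$, at which point Milnor's theorem supplies the bi-invariant Riemannian metric. Suppose for contradiction that no such inner product exists. Then by Lemma \ref{lem:unbounded} and Lemma \ref{lem:away_from_zero} there is an open dense $U\subset\g$ on which $\Ad_G$-orbits are \emph{both} unbounded and bounded away from the origin. Since $\psi$ is continuous, it is in fact invariant under the closure $\overline{\Ad_G}\subset GL(\g)$, which must therefore be non-compact. Via a Cartan decomposition of $\overline{\Ad_G}$, pick a one-parameter subgroup $\exp(tX)\in\overline{\Ad_G}$ with $X$ semisimple and top eigenvalue $\lambda>0$ on an eigenspace $E_\lambda\subsetneq\g$. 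Then for any convex body $K$ with non-empty interior, the rescaled body $e^{-t\lambda}\exp(tX)K$ Hausdorff-converges as $t\to\infty$ to a body supported in $E_\lambda$; combining $\Ad_G$-invariance, $k$-homogeneity, and continuity of $\psi$, one deduces that $\psi$ vanishes on bodies supported in $E_\lambda$. Varying $X$ over a sufficient family of such one-parameter subgroups, the vanishing spreads to a dense set of $k$-dimensional configurations, forcing $\psi\equiv 0$ via Klain's injectivity (and Schneider's analogue for the odd part), the sought contradiction.

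The principal obstacle is the final vanishing step: producing \emph{enough} spectral structure in $\overline{\Ad_G}$ so that the resulting family of top eigenspaces $\{E_\lambda\}$ is rich enough to force the Klain and Schneider data of $\psi$ to vanish on a dense subset of $\Gr_k(\g)$. Here the density statements of Lemmas \ref{lem:unbounded} and \ref{lem:away_from_zero} provide the geometric input — orbits of $\Ad_G$ on $\g$ escape to infinity in many independent directions — while Lemma \ref{lem:zero_implies_infinite} rules out pathological collapse in the rescaling argument. A brief structural case distinction (reductive versus non-reductive $\g$, with the nilpotent case handled separately by noting that unipotent $\Ad_G$-actions already have unbounded generic orbits in $\g$) may be needed to unify the argument.
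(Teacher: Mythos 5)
Your reduction to a nonzero $\psi\in\Val^\infty_k(\g)^{\Ad_G}$ with $0<k<n$ is exactly what the paper does, and the easy direction matches. After that, however, your route diverges from the paper's and, as written, has a genuine gap.

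The paper does not try to make $\psi$ vanish on a dense set of $k$-planes. Instead it applies the Alesker--Fourier transform to pass to $\Val^\infty_{n-j}(\g^*)\otimes\Dens(\g)$ and represents the result by its defining $n$-form, which (being closed and vertical) lies in $\Gamma(\p_+(\g),\,\xi^{\otimes j}\otimes\Sym^2(\largewedge^j\xi^\perp))$ by \cite[Lemma 4.8]{faifman_wannerer_fourier}. Then for a line $\eta$ belonging to the dense set guaranteed by Lemmas \ref{lem:unbounded} and \ref{lem:away_from_zero}, one writes $\tau_\eta=c(\eta)\,v^{\otimes j}\otimes Q$ and plays the two finite-dimensional factors against each other: unboundedness of $\Ad_G v$ plus smoothness and invariance forces $\Ad_{g_i}Q\to 0$, and then Lemma \ref{lem:zero_implies_infinite} manufactures a second sequence $h_i$ with $\Ad_{h_i}Q\to\infty$, forcing $\Ad_{h_i}v\to 0$, contradicting the choice of $\eta$. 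No spectral structure of $\overline{\Ad_G}$ and no Klain/Schneider theory is needed.

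Your approach, by contrast, rests on two steps that do not hold up. First, $\overline{\Ad_G}$ need not be reductive (for $G$ nilpotent it is unipotent), so there is in general no Cartan decomposition to invoke; you flag this but do not supply a replacement. Second, and more seriously, even in the reductive case the family of top eigenspaces $E_\lambda$ of one-parameter subgroups of $\overline{\Ad_G}$ is typically a very thin subset of the Grassmannian. For instance, with $G=\mathrm{SL}(2,\RR)$ one has $\overline{\Ad_G}=\mathrm{SO}^+(2,1)$ acting on $\g\cong\RR^{2,1}$, and the top eigenspaces of boosts are precisely the null lines, a measure-zero curve in $\Gr_1(\RR^3)$ rather than a dense set. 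So the rescaling argument, even when valid, only shows $\psi$ vanishes on bodies supported in null lines, which is far from what Klain's injectivity requires. And for the odd part, the odd analogue (Schneider's result) does not reduce odd valuations to their restrictions to $k$-planes at all, so the claimed spread-to-density step is not merely unproved but methodologically misaligned. The final paragraph of your proposal correctly identifies that the vanishing step is the obstruction; the paper avoids it entirely by the two-sided blow-up/collapse argument on a tensor-decomposed normal-cycle form, which is the key idea your proposal lacks.
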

\proof
If $G$ admits a bi-invariant metric, its first intrinsic volume $\mu_1$ is a bi-invariant smooth valuation as desired.

Assume now that $G$ does not admit a bi-invariant metric, and let $\phi\notin\Span\{\chi, \vol_G\}$ be a non-zero bi-invariant valuation. Assume first that $G$ is connected. We proceed to arrive at a contradiction.

Subtracting a multiple of $\chi$, and of $\vol_G$ if $G$ is unimodular, we may assume $\phi\in\mathcal W_j(G)\setminus\mathcal W_{j+1}(G)$ for some $1\leq j\leq n-1$. Any fixed choice of orientation on $G$ is evidently invariant under the adjoint action, yielding an equivariant identification $\Dens(\mathfrak g)=\wedge^n\mathfrak g^*$. The principal symbol of $\phi$ is an invariant element of $\Gamma(G, \Val^\infty_j(TG))$, and its value at the unit element defines a non-zero valuation $\psi\in\Val_j^\infty(\mathfrak g)$ invariant under the adjoint action of $G$. Applying the Alesker--Fourier isomorphism, we get a non-zero element
$\mathbb F\psi \in \Val_{n-j}^\infty(\mathfrak g^*)\otimes \Dens(\mathfrak g)=\Val_{n-j}^\infty(\mathfrak g^*)\otimes \wedge^n\mathfrak g^*$.

The $n$-form $\tau=\tau_{\mathbb F \psi}$ of $\mathbb F\psi$ is a non-zero $\mathrm{Ad}_G$-invariant element of 
\begin{align*}
	\Omega^j(\mathbb P_+(\mathfrak g), \wedge^{n-j} \mathfrak g)\otimes \wedge^n\mathfrak g^* & =\Gamma(\mathbb P_+(\mathfrak g),\wedge^j(\xi^*\otimes \mathfrak g/\xi)^*\otimes\wedge^{n-j} \mathfrak g)\otimes \wedge^n\mathfrak g^*\\
	& = \Gamma(\mathbb P_+(\mathfrak g),\wedge^j(\xi \otimes \xi^\perp) \otimes\wedge^{n-j} \mathfrak g)\otimes \wedge^n\mathfrak g^*\\
	& =  \Gamma(\mathbb P_+(\mathfrak g),\xi^{\otimes j}\otimes \wedge^j \xi^\perp\otimes\wedge^j \mathfrak g^*),
\end{align*} 
where in the last step we used the Hodge star isomorphism $\ast:\wedge^{n-j} \mathfrak g \otimes \wedge^n\mathfrak g^*\to \wedge^{j} \mathfrak g^*$. Since $\tau$ is closed and vertical we have in fact
\begin{displaymath}
	\tau \in   \Gamma(\mathbb P_+(\mathfrak g),\xi^{\otimes j}\otimes \mathrm{Sym}^2(\wedge^j \xi^\perp)),
\end{displaymath}
see \cite[Lemma 4.8]{faifman_wannerer_fourier}.

Let $\eta\in \mathbb P_+(\mathfrak g)$ be a line such that $\mathrm{Ad}_G v$ is unbounded and $0\notin\overline{\mathrm{Ad}_G v}$ for any non-zero $v\in \eta$. Those assumptions hold by Lemmas \ref{lem:unbounded} and 
\ref{lem:away_from_zero} for a dense set of lines $\eta$.

Write $\tau_\eta=c(\eta) v^{\otimes j}\otimes Q$, where $v\in\eta, v \neq 0$,  $Q\in  \mathrm{Sym}^2(\wedge^j \eta^\perp)$, $Q\neq 0$, and $c(\eta)\in\R$. Assume $c(\eta)\neq 0$.

Take a sequence $g_i\in G$ such that $\mathrm{Ad}_{g_i}v\to \infty$. By the smoothness and $\Ad_G$-invariance of $\tau$, it must hold that $\mathrm{Ad}_{g_i}Q\to 0$.

By Lemma \ref{lem:zero_implies_infinite}, there is a sequence $h_i\in G$ such that $\mathrm{Ad}_{h_i}Q\to\infty$ in $\Sym^2(\wedge^j\mathfrak g^*)$. Again since $\tau$ is smooth, $\mathrm{Ad}_{h_i}v\to 0$, a contradiction.
It follows that $c(\eta)=0$ for a dense set of $\eta$, and so $\tau=0$. This contradiction concludes the proof for connected $G$.
\endproof

To complete the proof of Theorem \ref{mainthm_existence_biinvariant}, we will need two more auxiliary statements.

\begin{Proposition}  \label{prop_finite_dimensionality}
	Let $G$ be a connected compact Lie group of dimension at least $2$. The following statements are equivalent:
	\begin{enumerate}
		\item $\dim \mathcal V^\infty(G)^{G \times G}<\infty$.
		\item The coadjoint action of $G$ on $\p_*(\mathfrak g^*)$ is transitive. 
		\item $G$ is isomorphic to either $S^3$ or $\mathrm{SO}(3)$.
	\end{enumerate}
\end{Proposition}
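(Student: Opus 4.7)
The plan is to prove the equivalences through the cycle (iii)$\Rightarrow$(ii)$\Rightarrow$(i)$\Rightarrow$(iii). The implication (iii)$\Rightarrow$(ii) is immediate: both $S^3\cong\mathrm{SU}(2)$ and $\mathrm{SO}(3)$ share the Lie algebra $\mathfrak{su}(2)\cong\R^3$, on which the adjoint representation factors through $\mathrm{SO}(3)$ acting transitively on the unit sphere; any $\Ad_G$-invariant inner product identifies that sphere with $\p_+(\mathfrak g^*)$.

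For (ii)$\Rightarrow$(i), I would use the identification from Section~3,
\begin{displaymath}
\Omega^{p}_{\ori}(\p_G)^{G\times G}\cong\bigoplus_k\Omega^{k}\bigl(\p_+(\mathfrak g^*),\largewedge^{p-k}\mathfrak g^*\bigr)^{\Ad_G}\otimes\ori(\mathfrak g).
\end{displaymath}
Transitivity writes $\p_+(\mathfrak g^*)=G/H$, and each summand on the right becomes the space of $H$-invariants in the fiber of a $G$-equivariant finite-rank bundle over $G/H$, which is finite-dimensional. Consequently every bi-invariant differential form on $\p_G$ lies in a fixed finite-dimensional space, and since $\calV^\infty(G)^{G\times G}$ is a quotient of the space of pairs $(\mu,\omega)\in\Omega^n_{\ori}(G)^{G\times G}\times\Omega^{n-1}_{\ori}(\p_G)^{G\times G}$ via $[[\mu,\omega]]$, (i) follows.

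For (i)$\Rightarrow$(iii), I plan to first reduce to a statement about translation-invariant valuations. The canonical Alesker filtration identifies $\gr_k\calV^\infty(G)\cong\Gamma(G,\Val_k^\infty(TG))$, and since the tangent bundle of $G$ is left-trivialized by $\mathfrak g$, taking bi-invariant sections yields
\begin{displaymath}
\gr_k\calV^\infty(G)^{G\times G}\cong\Val_k^\infty(\mathfrak g)^{\Ad_G}.
\end{displaymath}
Hypothesis (i) thus forces $\dim\Val^\infty(\mathfrak g)^{\Ad_G}<\infty$. By a theorem of Alesker characterizing compact subgroups $H\subset\mathrm{GL}(V)$ for which $\Val^\infty(V)^H$ is finite-dimensional, this is possible only when $\Ad_G$ acts transitively on the unit sphere of $\mathfrak g$, equivalently on $\p_+(\mathfrak g^*)$. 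Since regular adjoint orbits have codimension equal to $\mathrm{rank}\,G$, transitivity on the sphere forces $\mathrm{rank}\,G\le 1$; combined with $\dim G\ge 2$ this leaves rank one, and the only compact simple Lie algebra of rank one is $\mathfrak{su}(2)$, whose connected integrations are precisely $S^3$ and $\mathrm{SO}(3)$.

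The main obstacle is the appeal to Alesker's finite-dimensionality theorem. Should a self-contained argument be desired, I would instead prove the contrapositive directly: if the adjoint action on $\p_+(\mathfrak g^*)$ has positive-dimensional orbit space, the space $C^\infty(\p_+(\mathfrak g^*))^{\Ad_G}$ is infinite-dimensional, and integrating its elements against the $(n-1)$-st surface area measure of a smooth convex body in $\mathfrak g$ produces an infinite-dimensional family of linearly independent $\Ad_G$-invariant elements of $\Val_{n-1}^\infty(\mathfrak g)$, contradicting (i) via the graded identification above.
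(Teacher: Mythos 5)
Your proposal is correct, but it reaches $(ii)\Rightarrow(iii)$ by a genuinely different and rather more elementary argument than the paper's. The paper invokes the Montgomery--Samelson/Borel classification of compact groups acting transitively and effectively on spheres, runs a dimension count on the quotient $G'=G/\ker(\Ad)$, and then still has to exclude two residual pairs $(\C^2,\mathrm U(2))$ and $(\C^2,\mathrm{SU}(2))$ by a topological argument about isotropic $4$-manifolds. Your rank argument short-circuits all of this: once the adjoint action is transitive on the unit sphere in $\mathfrak g$, the generic orbit, of dimension $\dim G-\mathrm{rank}\,G$, must equal the sphere, of dimension $\dim G-1$, giving $\mathrm{rank}\,G=1$; together with $\dim G\ge 2$ and the structure of compact Lie algebras (centre plus semisimple part, each contributing to the rank), this immediately forces $\mathfrak g=\mathfrak{su}(2)$ and $G\in\{S^3,\mathrm{SO}(3)\}$. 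That is cleaner and avoids both the classification and the isotropic-manifold detour. The other ingredients line up with the paper: your $(ii)\Rightarrow(i)$ is the same transitivity argument (the paper simply cites this as well known), and your Alesker finite-dimensionality step inside $(i)\Rightarrow(iii)$ is the same input the paper uses contrapositively via [Bernig, Theorem 4.1]. One point to make explicit: when you pass from $\dim\calV^\infty(G)^{G\times G}<\infty$ to $\dim\Val^\infty(\mathfrak g)^{\Ad_G}<\infty$ you need surjectivity of $\gr_k\calV^\infty(G)^{G\times G}\to\Val_k^\infty(\mathfrak g)^{\Ad_G}$, i.e.\ that every $\Ad_G$-invariant element of $\Val_k^\infty(\mathfrak g)$ extends to a bi-invariant smooth valuation on $G$; the paper supplies exactly this via Alesker's extension result together with averaging over $G\times G$, and your argument should reference the same mechanism rather than take the graded isomorphism for granted.
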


\proof

(i)$\implies$(ii): Suppose that the coadjoint action is not transitive on $\p_+(\mathfrak g^*)$. By \cite[Theorem 4.1]{bernig_aig10} the space of $\Val_1(\mathfrak g)^{\Ad G}$ is infinite-dimensional. Using the results and the notation from \cite[Section 1.5]{alesker_val_man4}, we can extend an element of this space as a bi-invariant section of $\underline{\Val}_1(TG)=W_1(G)/W_2(G)$. We thus find a smooth valuation in $W_1(G)$ whose image in $\underline{\Val}_1(TG)$ is the given section. By averaging with respect to the left and right translations on $G$, we finally get a bi-invariant valuation in $W_1(G)$ whose image in $\Val_1(\mathfrak g)$ is the element we started with. Hence the subspace $W_1(G)^{G \times G} \subset \mathcal V^\infty(G)^{G \times G}$ is infinite-dimensional. 

(ii)$\implies$(i): Under the assumption (ii), $G$ admits a bi-invariant 
riemannian metric such that $G \times G$ acts transitively on the unit tangent bundle. It is well known that the space of smooth invariant valuations is finite-dimensional in this case, see for instance \cite[Section 10]{alesker_bernig}.  

(iii)$\implies$(ii): This is clear. 

(ii)$\implies$(iii): As above, there exists a bi-invariant metric on $G$. By assumption, the adjoint action of $G$ is transitive on the unit sphere in $\mathfrak g$. Let $H \subset G$ be the kernel of the adjoint action. Then we obtain an effective and transitive action of $G'=G/H$ on the unit sphere in $V=\mathfrak g$.

The connected groups $G'$ that act transitively and effectively on the unit sphere in some euclidean vector space $V$ are known \cite{borel49, montgomery_samelson43}. The possible pairs $(V,G')$ are 
\begin{gather*}
	(\R^n,\mathrm{SO}(n)), (\C^n,\mathrm U(n)), (\C^n,\mathrm{SU}(n)),\\
	 (\mathbb H^n,\mathrm{Sp}(n)),	(\mathbb H^n,\mathrm{Sp}(n) \cdot \mathrm{U}(1)),  (\mathbb H^n,\mathrm{Sp}(n) \cdot \mathrm{Sp}(1)), 
\end{gather*}
for $n\in\mathbb N$, and the three exceptional cases 
\begin{displaymath}
	(\R^7,\mathrm G^2), (\R^8,\mathrm{Spin}(7)), (\R^{16},\mathrm{Spin}(9)).
\end{displaymath}
In our situation, we have $\dim V=\dim G \geq \dim G'=\dim G-\dim H$ and $\dim G \geq 2$. Noting that $\mathrm{Sp}(1) \cdot \mathrm{U}(1) \cong \mathrm U(2)$ and $\mathrm{SU}(2) \cong \mathrm{Sp}(1)$, the only pairs from the list satisfying these dimension bounds are 
\begin{displaymath}
	(\R^3,\mathrm{SO}(3)), (\C^2,\mathrm U(2)), (\C^2,\mathrm{SU}(2)).
\end{displaymath}

In the case $(\R^3,\mathrm{SO}(3))$, $G$ is a finite cover of $\mathrm{SO}(3)$. Since $\pi_1(\mathrm{SO}(3))=\mathbb Z/2\mathbb Z$, there are only the two possibilities $G=\mathrm{SO}(3)$ and $G=S^3$. 

It remains to exclude the two other cases. The kernel of the action of $G \times G$ on $G$ is the diagonal embedding of $H$, hence $\tilde G:= (G \times G)/H$ acts transitively and effectively on the unit tangent bundle, i.e., $(G,\tilde G)$ is a compact isotropic manifold of dimension $4$.  

Isotropic manifolds $(M,\tilde G)$ are classified in \cite{tits55, wang52}. The only compact examples of dimension $4$ are $M=\mathbb{RP}^4, \mathbb{CP}^2,\mathbb{HP}^1$, and in all three cases $\tilde G$ is the identity component of the respective isometry group. Since $\mathbb{RP}^4$ is non-orientable, it cannot be diffeomorphic to a Lie group. On a compact Lie group $G$, the vanishing of the first cohomology implies the vanishing of the second cohomology and the non-vanishing of the third cohomology group \cite[Corollaries 12.9 and 12.11]{bredon93}, hence $\mathbb{CP}^2$ and $\mathbb{HP}^1 \cong S^4$ cannot be diffeomorphic to Lie groups either. 
\endproof

\proof[Proof of Theorem \ref{mainthm_existence_biinvariant}]
The first statement is just Theorem \ref{prop:no_biinvariant}. For the second statement, let us assume that $G$ admits a bi-invariant riemannian metric. By Milnor's result \cite{milnor76} we have $G \cong K \times \R^m$, where $K$ is a compact Lie group and $m \geq 0$. If $m=0$, then we may apply Proposition \ref{prop_finite_dimensionality} to deduce that $\dim \mathcal V^\infty(G)^{G \times G}<\infty$ if and only if $G=K$ equals $S^3$ or $\mathrm{SO}(3)$.
If $m\geq 1$, consider the covering map $q:K\times\R^m\to K\times T$ where $T=\R^m/\mathbb Z^m$, and note that the pull-back map \[q^*:\mathcal V^\infty(K\times T)^{(K\times T)\times (K\times T)}\to  \mathcal V^\infty(K\times \R^m)^{(K\times \R^m)\times (K\times \R^m)}\] is an isomorphism. The former space is infinite-dimensional by the case of $m=0$ with $K\times T$ replacing $K$.
\endproof

\section{Convolution of smooth bi-invariant valuations on Lie groups}
\label{sec:convolution_unimodular}

In this section we define the convolution of bi-invariant smooth valuations on a general unimodular Lie group, thus unifying the Alesker--Bernig convolution on compact Lie groups with the Bernig--Fu convolution of translation-invariant valuations on a linear space.

\subsection{Haar measures and forms}

We first verify that for a unimodular Lie group $G$, the line $\mathcal M(G)^G$ of Haar measures is naturally a direct summand of the space of left-invariant valuations. To this end,  we define the space of \emph{measure-free} valuations $\mathcal V_{mf}^{\infty}(G)^{L_G} \otimes \Dens(\mathfrak g^*)$ to consist of all valuations $\{c,\tau\}{ \in\mathcal V^{\infty}(G)^{L_G} \otimes \Dens(\mathfrak g^*)}$ for which $\tau_0=0$.

\begin{Proposition}\label{prop:haar_direct}
Let $G$ be a unimodular Lie group. Then
	\begin{displaymath}
		\mathcal V^{\infty}(G)^{L_G} \otimes \Dens(\mathfrak g^*)=\C \oplus \mathcal V_{mf}^{\infty}(G)^{L_G} \otimes \Dens(\mathfrak g^*).
	\end{displaymath}
\end{Proposition}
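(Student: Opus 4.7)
The plan is to construct a continuous linear functional
$$\mu\colon\mathcal V^\infty(G)^{L_G}\otimes\Dens(\mathfrak g^*)\longrightarrow\mathbb C$$
whose kernel is the measure-free subspace and whose restriction to the canonical copy $\mathcal M(G)^G\otimes\Dens(\mathfrak g^*)\cong\mathbb C$ of Haar measures is the identity; the direct sum decomposition then follows automatically.

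The heart of the matter is to show that for any $\phi=\{c_\phi,\tau_\phi\}\in\mathcal V^\infty(G)^{L_G}$, the component $\tau_{\phi,0}\in\Omega^0(\mathbb P_+(\mathfrak g^*),\largewedge^n\mathfrak g^*\otimes\ori(\mathfrak g))$, which is a priori a function on $\mathbb P_+(\mathfrak g^*)$, is in fact a constant element of $\Dens(\mathfrak g)$. Since $\tau_\phi$ is exact hence closed, applying Lemma \ref{lem:closed_compatible} in bidegree $k=1$ gives
$$d\tau_{\phi,0}=-\partial^*\tau_{\phi,1}.$$
The key auxiliary fact, which is exactly where unimodularity enters, is that $\partial^*\colon\largewedge^{n-1}\mathfrak g^*\to\largewedge^n\mathfrak g^*$ vanishes identically. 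I will deduce this from Lemma \ref{lem:partial_leibnitz} with $k+1=n$: for $X\in\largewedge^n\mathfrak g$ and any $Y\in\mathfrak g$ one has $\partial X\wedge Y=(-1)^n X\wedge\partial Y=0$ since $\partial$ vanishes on $\largewedge^1\mathfrak g$ for degree reasons. Perfectness of the wedge pairing $\largewedge^{n-1}\mathfrak g\otimes\mathfrak g\to\largewedge^n\mathfrak g$ then forces $\partial X=0$, so $\partial=0$ on $\largewedge^n\mathfrak g$, and dualization yields the vanishing of $\partial^*$. Consequently $d\tau_{\phi,0}=0$, so $\tau_{\phi,0}$ is locally constant; for $n\geq 2$ connectedness of $\mathbb P_+(\mathfrak g^*)$ promotes this to a genuine constant, while the case $n=1$ is dispatched separately using $\pi_*\tau_\phi=dc_\phi=0$, which relates the two values directly.

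With $\tau_{\phi,0}\in\Dens(\mathfrak g)$ well-defined, tensoring with $\Dens(\mathfrak g^*)$ and using $\Dens(\mathfrak g)\otimes\Dens(\mathfrak g^*)\cong\mathbb C$ produces the sought-after continuous $\mu$. Its kernel is tautologically the measure-free subspace. For any Haar measure $\sigma$, the valuation $[[\sigma,0]]=\{0,\pi^*\sigma\}$ has $\tau_0$ constantly equal to $\sigma|_e$, so $\mu(\sigma\otimes\sigma^*)=1\in\mathbb C$. Thus the canonical inclusion $\mathbb C\cong\mathcal M(G)^G\otimes\Dens(\mathfrak g^*)\hookrightarrow\mathcal V^\infty(G)^{L_G}\otimes\Dens(\mathfrak g^*)$ is a section of $\mu$, which yields the claimed splitting.

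The main obstacle is the constancy of $\tau_{\phi,0}$: on a non-unimodular group the operator $\partial^*$ on $\largewedge^{n-1}\mathfrak g^*$ would in general not vanish, the $\tau_0$-component could legitimately vary along the cosphere directions, and no canonical scalar extracting a Haar part would exist. Everything else is essentially bookkeeping once this vanishing is in hand.
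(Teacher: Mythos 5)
Your proof is correct and takes essentially the same approach as the paper: both arguments extract the constant $\tau_0$-component (using unimodularity to kill the $\partial$ term) and then split off the Haar-measure line. The only cosmetic difference is that the paper invokes Corollary~\ref{cor:closed_form} for the Hodge-transformed forms $\tilde\tau_k$, so that the relevant operator $\partial$ acts on $\largewedge^1\g$ and vanishes immediately for degree reasons, whereas you work with the untransformed $\tau_k$ via Lemma~\ref{lem:closed_compatible} and hence need the extra (correct) step of deducing from Lemma~\ref{lem:partial_leibnitz} that $\partial$ vanishes on $\largewedge^n\g$, i.e.\ $\partial^*=0$ on $\largewedge^{n-1}\g^*$; the underlying mechanism is the same. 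Your separate treatment of $n=1$, where $\p_+(\g^*)$ is disconnected, is a small point the paper elides.
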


\proof
Consider $\tau_0\in\Omega^0(\mathbb P_+(\mathfrak g^*), \wedge^n\mathfrak g^* \otimes\ori(\g)) \otimes \Dens(\mathfrak g^*) \cong C^{\infty}(\mathbb P_+(\mathfrak g^*))$ for any left-invariant valuation $\phi=\{c,\tau\}$. By Corollary \ref{cor:closed_form}, $d\widetilde \tau_0=\partial\widetilde \tau_1=0$. Thus $\tau_0$ is a constant which will be denoted by $\mu(\phi)$. Depending on the context, we can think of $\mu(\phi)$ as an element of $\C$ or of $\mathcal V^\infty(G)^{L_G} \otimes \Dens(\mathfrak g^*)$. Writing $\phi=\mu(\phi)+(\phi-\mu(\phi))$ concludes the proof.
\endproof

We remark that when restricted to bi-invariant valuations, the projection $\mu:\mathcal V^{\infty}(G)^{G \times G} \otimes \Dens(\mathfrak g^*) \to \C$ is a character with respect to the convolution defined below, i.e., it satisfies $\mu(\phi * \psi)=\mu(\phi) \mu(\psi)$, see Proposition \ref{pro:mu_character}.

\begin{Lemma} \label{lemma_invariant_primitive}
	Let $G$ be a unimodular Lie group. If $\tau \in \Omega^p(\p_G)^{L_G} \otimes \largewedge^n\g$ satisfies $d\tau=0$ and $\pi_*\tau=0$, then there exists $\omega \in \Omega^{p-1}(\p_G)^{L_G} \otimes \largewedge^n\g$ such that 
	$$d\omega=\tau-\tau_0\quad\text{and}\quad \pi_*\omega=0.$$
\end{Lemma}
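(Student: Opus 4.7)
The plan is to exploit the bigraded decomposition
\[\Omega^\bullet(\p_G)^{L_G}\otimes \largewedge^n\g \cong \bigoplus_{a,b} \Omega^b(\p_+(\g^*), \largewedge^{a}\g^*)\otimes \largewedge^n\g,\]
which, by Lemma \ref{lem:closed_compatible}, turns the de Rham complex of left-invariant forms into the total complex of a bicomplex whose two differentials are the sphere de Rham $d$ on $\p_+(\g^*)\cong S^{n-1}$ and (up to sign) the Chevalley--Eilenberg differential $\partial^*$. In this decomposition, the closedness of $\tau$ reads $d\tau_{k-1}=(-1)^k\partial^*\tau_k$, while $\pi_*\tau=0$ is equivalent to $\int_{\p_+(\g^*)}\tau_{n-1}=0$.

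I would construct $\omega=\sum_{k\geq 0}\omega_k$ iteratively from top to bottom, cancelling the components $\tau_k$ for $k\geq 1$ one at a time. The initial step uses the vanishing integral of $\tau_{n-1}$: combined with the de Rham theorem on $S^{n-1}$, it produces a smooth $\omega_{n-2}$ with $d\omega_{n-2}=\tau_{n-1}$; applying a fixed Green's operator for the Laplace--Beltrami operator on $S^{n-1}$ makes this primitive depend continuously and linearly on $\tau$, and in particular preserves $L_G$-invariance. Subtracting $d\omega_{n-2}$ as a form on $\p_G$ removes the $k=n-1$ component while altering the $k=n-2$ one by a $\partial^*$-correction; the resulting form is still closed, so the modified $k=n-2$ component is $d$-closed on the sphere. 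Iterating at sphere degrees $n-2,n-3,\ldots,1$, and using that $H^k(S^{n-1})=0$ for $1\leq k\leq n-2$, yields successively $\omega_{n-3},\ldots,\omega_0$ such that $\tau-d\omega$ has vanishing sphere-positive components.

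What remains in sphere degree $0$ is, by the bicomplex cocycle equations, both $\partial^*$-closed and $d$-constant on $\p_+(\g^*)$, hence genuinely left-invariant at the Lie algebra level. This residue equals $\tau_0$: in the case $p=n$ relevant to the convolution construction, Corollary \ref{cor:closed_form} combined with $\partial|_{\largewedge^1\g}=0$ already forces $\tilde\tau_0$ to be constant, so no modification at the bottom sphere degree is introduced by the iteration and the identification is tautological. Because no $\omega_{n-1}$ is ever introduced, $\pi_*\omega=\int_{\p_+(\g^*)}\omega_{n-1}=0$ holds for free. The main technical hurdle is making the successive primitives continuous and linear in the data while preserving $L_G$-invariance; this is what the parametric Green's operator on $S^{n-1}$ provides uniformly in the coefficients, since the entire construction takes place with values in the finite-dimensional space $\largewedge^\bullet \g^*\otimes \largewedge^n\g$.
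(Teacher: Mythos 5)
Your proof is the same as the paper's: decompose left-invariant forms on $\p_G$ into the bicomplex $\bigoplus_k\Omega^k(\p_+(\g^*),\largewedge^{p-k}\g^*)$ with differentials $d$ and (up to sign) $\partial^*$, build the primitive inductively from the top sphere degree down using $H^k(S^{n-1})=0$ for $1\leq k\leq n-2$ (with $\pi_*\tau=0$, i.e.\ $\int_{\p_+(\g^*)}\tilde\tau_{n-1}=0$, handling the top case $k=n-1$), and leave $\omega_{n-1}=0$ to secure $\pi_*\omega=0$. Two small remarks: the Green's operator is superfluous, because under the identification $\Omega^\bullet(\p_G)^{L_G}\cong\bigoplus_k\Omega^k(\p_+(\g^*),\largewedge^\bullet\g^*)$ \emph{any} smooth primitive on $\p_+(\g^*)$ automatically corresponds to a left-invariant form on $\p_G$, so there is no invariance to preserve; and in your bottom-degree check the fact you actually need is that $\partial^*\omega_0=0$ holds automatically once $p\geq n$ (since then $\tilde\omega_0$ is $\largewedge^{n-p+1}\g$-valued and $\partial$ vanishes on $\largewedge^{\leq 1}\g$), not the constancy of $\tilde\tau_0$ that you cite --- both follow from $\partial|_{\largewedge^1\g}=0$, but the former is what guarantees the iteration does not introduce a spurious degree-$0$ defect, matching the case the paper uses.
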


\begin{proof}
	We will inductively construct forms $\tilde \omega_k \in \Omega^k(\p_+(\mathfrak g^*),\largewedge^{n-p+k+1}\mathfrak g)$ such that $\omega=\sum_{k=0}^{n-1} *\widetilde\omega_k$ has the desired properties. We have $\tau=\sum_k \tau_k$ with $\tau_k \in \Omega^k(\p_+(\mathfrak g^*),\largewedge^{p-k}\mathfrak g^*) \otimes   \largewedge^n\g$ and $\tilde \tau_k \in \Omega^k(\p_+(\mathfrak g^*),\largewedge^{n-p+k} \mathfrak g)$. By Corollary \ref{cor:closed_form}, the equation $d\omega=\tau-\tau_0$ is equivalent to the system of equations
	\begin{displaymath}
		d\tilde \omega_k+(-1)^{n-p} \partial \tilde \omega_{k+1}=\tilde \tau_{k+1}, \quad 0 \leq k \leq n-1.
	\end{displaymath}
	First, for $k=n-1$ we set $\tilde \omega_{n-1}=0$. Now suppose that we have already constructed $\tilde \omega_{k+1}$ for some $k \leq n-2$. Since  $d\tau=0$, Corollary  \ref{cor:closed_form}  implies
	\begin{align*}
		d(\tilde \tau_{k+1}-(-1)^{n-p}\partial \tilde \omega_{k+1})&=d\tilde \tau_{k+1}- (-1)^{n-p}\partial(\tilde \tau_{k+2}-(-1)^{n-p}\partial \tilde \omega_{k+2})\\&=d\tilde \tau_{k+1}+(-1)^{n-p+1}\partial\tilde \tau_{k+2}\\
		&=(\widetilde {d\tau})_{k+2}\\
		&= 0.
	\end{align*}
	We claim that the closed form $\tilde \tau_{k+1}-(-1)^{n-p}\partial \tilde \omega_{k+1}\in\Omega^{k+1}(\p_+(\mathfrak g^*),\largewedge^{n-p+k+1}\mathfrak g)$ is actually exact. Indeed, if $k<n-2$, then $H^{k+1}(\p_+(\mathfrak g^*))=H^{k+1}(S^{n-1})=0$. If $k=n-2$, then $H^{n-1}(\p_+(\mathfrak g^*))=H^{n-1}(S^{n-1})=\R$, but we moreover have
	\begin{displaymath}
		\int_{\p_+(\mathfrak g^*)} (\tilde \tau_{n-1}-(-1)^{n-p}\partial \tilde \omega_{n-1})=\int_{\p_+(\mathfrak g^*)} \tilde \tau_{n-1}=0
	\end{displaymath}
 since $\pi_*\tau=0$ and $\tilde \omega_{n-1}=0$. It follows that $\tilde \tau_{k+1}-(-1)^{n-p}\partial \tilde \omega_{k+1}=d\tilde \omega_k$ for some $\tilde \omega_k \in \Omega^k(\p_+(\mathfrak g^*),\largewedge^{n-p+k+1}\mathfrak g)$ which finishes the induction step. We thus have $d\omega=\tau-\tau_0$. Since $\tilde \omega_{n-1}=0$, we also get $\pi_*\omega=0$. 
\end{proof}

\begin{Proposition} 
	\label{prop_exact}
	Let $G$ be a unimodular Lie group. Every left-invariant valuation $\phi \in \mathcal V^\infty(G)^{L_G} \otimes \Dens(\mathfrak g^*)$ can be represented as $\phi=[[\sigma,\omega]]$ with $\sigma \in \Omega^n_{\ori}(G)^{L_G} \otimes \Dens(\mathfrak g^*)$ and $\omega \in \Omega^{n-1}_{\ori}(\p_G)^{L_G} \otimes \Dens(\mathfrak g^*)$ such that $d\omega=D\omega$. Moreover, for each such choice of $\sigma,\omega$, it holds that $\mu(\phi)=\sigma$.
\end{Proposition}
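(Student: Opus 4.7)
The plan is to construct a representation $(\sigma,\omega)$ explicitly by combining Lemma~\ref{lemma_invariant_primitive} with a fibrewise-top closed form that matches the function part $c_\phi$, and then to derive the identity $\mu(\phi)=\sigma$ from the formula for the degree-zero component of $d\omega$ supplied by Lemma~\ref{lem:closed_compatible} together with the vanishing of the Koszul boundary $\partial$ in top degree on unimodular Lie algebras.

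\emph{Existence.} Write $\phi=\{c_\phi,\tau_\phi\}$. Left-invariance of $\phi$ forces $c_\phi$ to be locally constant (so $dc_\phi=0$) and $\tau_\phi$ to be a left-invariant, vertical, closed $n$-form on $\mathbb P_G$ with $\pi_*\tau_\phi=0$. Decomposing $\tau_\phi=\sum_k(\tau_\phi)_k$ as in \eqref{eq_homogeneous_decomposition}, the degree-zero summand corresponds to a left-invariant $n$-form on $G$, which we denote $\sigma$; by Proposition~\ref{prop:haar_direct} this $\sigma$ coincides with $\mu(\phi)$. Lemma~\ref{lemma_invariant_primitive} then supplies a left-invariant $\omega_1\in\Omega^{n-1}_{\ori}(\mathbb P_G)^{L_G}\otimes\Dens(\mathfrak g^*)$ with $d\omega_1=\tau_\phi-\pi^*\sigma$ and $\pi_*\omega_1=0$. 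To recover the correct function part, pick any $(n-1)$-form $\vartheta$ on $\mathbb P_+(\mathfrak g^*)$ of total integral one and let $\omega_2$ be its pull-back under the second projection in the trivialisation $\mathbb P_G\cong G\times\mathbb P_+(\mathfrak g^*)$; then $\omega_2$ is left-invariant, closed on $\mathbb P_G$ (being a top form on the fibre independent of the $G$ factor), and satisfies $\pi_*\omega_2=1$. Setting $\omega:=\omega_1+c_\phi\omega_2$, the differential $d\omega=\tau_\phi-\pi^*\sigma$ is vertical as a difference of verticals, so $D\omega=d\omega$; moreover $\pi_*\omega=c_\phi$, giving $[[\sigma,\omega]]=\{c_\phi,\tau_\phi\}=\phi$.

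\emph{Identification $\mu(\phi)=\sigma$.} Let $(\sigma,\omega)$ be any pair as in the statement, so that $\tau_\phi=D\omega+\pi^*\sigma=d\omega+\pi^*\sigma$. Take the degree-zero component of this identity: $(\pi^*\sigma)_0=\sigma$ and, by Lemma~\ref{lem:closed_compatible} with $k=0$, $(d\omega)_0=-\partial^*\omega_0$, where $\omega_0$ is the zeroth summand of $\omega$ in $\largewedge^{n-1}\mathfrak g^*\otimes\ori(\mathfrak g)\otimes\Dens(\mathfrak g^*)$. It remains to show that $\partial^*$ vanishes on $\largewedge^{n-1}\mathfrak g^*$ whenever $G$ is unimodular. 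By duality this amounts to $\partial=0$ on $\largewedge^n\mathfrak g$, which follows from Lemma~\ref{lem:partial_leibnitz}: for any $X\in\largewedge^n\mathfrak g$ and $Y\in\largewedge^1\mathfrak g$ one has $\partial X\wedge Y=(-1)^nX\wedge\partial Y=0$ since $\partial\equiv 0$ on $\largewedge^1\mathfrak g$, and letting $Y$ vary over $\mathfrak g$ forces $\partial X=0$. Hence $(\tau_\phi)_0=\sigma$, and Proposition~\ref{prop:haar_direct} identifies the left-hand side with $\mu(\phi)$.

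The main technical subtlety is the addition of $c_\phi\omega_2$: Lemma~\ref{lemma_invariant_primitive} alone would only yield a representation of $\phi-c_\phi\chi$ rather than of $\phi$, and the fibrewise-top closed form $\omega_2$ is the simplest left-invariant closed correction that shifts $\pi_*\omega$ by the required constant without breaking the verticality of $d\omega$.
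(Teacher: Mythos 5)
Your proof is correct and follows essentially the same route as the paper: both rely on Lemma~\ref{lemma_invariant_primitive} to produce the primitive $\omega_1$ and on Lemmas~\ref{lem:closed_compatible} and~\ref{lem:partial_leibnitz} to obtain $(d\omega)_0=0$, which gives the identification $\mu(\phi)=\sigma$. The only (cosmetic) difference is in handling the constant function part $c_\phi$: you add the explicit fibrewise closed form $c_\phi\,p_2^*\vartheta$ with $\pi_*(p_2^*\vartheta)=1$, whereas the paper subtracts $c_\phi\chi$ using Chern's left-invariant representation $\chi=[[\sigma',\omega']]$; your variant is marginally more self-contained since it sidesteps the implicit verification that Chern's transgression form indeed satisfies $d\omega'=D\omega'$.
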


\proof
Let $\phi=\{\zeta,\tau\}$ with $\zeta \in C^\infty(G)^{L_G} \otimes \Dens(\mathfrak g^*), \tau \in \Omega^n_{\ori}(\p_G)^{L_G} \otimes \Dens(\mathfrak g^*)$. The Euler characteristic $\chi$ can be written as $[[\sigma',\omega']]$ with $\sigma',\omega'$ left invariant (just use Chern's construction with a left-invariant riemannian metric). Then $D\omega'+\pi^*\sigma'=0,\pi_*\omega'=1$. Subtracting a multiple of $\chi$ we may thus assume that $\zeta=0$. We take $\omega$ as in Lemma \ref{lemma_invariant_primitive} with $d\omega=\tau-\tau_0$, and $\sigma$ such that $\pi^*\sigma=\tau_0$. Then $d\omega=D\omega$, $D\omega+\pi^*\sigma=\tau$, and $\pi_*\omega=0$, hence $\phi=[[\sigma,\omega]]$.

For the last part observe that by Lemmas \ref{lem:closed_compatible} and \ref{lem:partial_leibnitz}, we have $(d\omega)_0=0$, and so $\pi^*\mu(\phi)=\tau_0$ must equal $\pi^*\sigma$.
\endproof

\subsection{From convolution on compact Lie groups to general convolution}
\label{sec:general_convolution}

	By Theorem \ref{prop:no_biinvariant}, a connected Lie group $G$ admits non-obvious bi-invariant smooth valuations if and only if $G=K\times V$ with $K$ compact and $V$ linear. As explained in the introduction, instead of proving Theorem \ref{mthm:unimodular} directly, we first define an a-priori distinct convolution operation for groups of the form $K\times V$. Thus throughout this subsection, we will reserve the notation $\phi\ast\psi$ for this distinct convolution of Lemma \ref{lem:lattices} below. It will be seen in Proposition \ref{prop_convolution_formula} to coincide with that of Definition \ref{mdef:convolution}.

\begin{Lemma}\label{lem:lattices}
	For $G=K\times V$ with $K$ compact and $V$ a linear space, fix a lattice $\Gamma\subset V$, and let $T_\Gamma=V/\Gamma$ be the corresponding compact torus. Denote $G_\Gamma=K\times T$, and let $q_\Gamma:G\to G_\Gamma$ be the natural covering map, inducing the pull-back map $q_\Gamma^*:\mathcal V^\infty(G_\Gamma)\to \mathcal V^\infty(G)$. Let $\sigma_\Gamma\in\Dens(V^*)$ be the dual density defined by $\Gamma$, which we identify with $\sigma_\Gamma \in\Dens(\mathfrak g^*)$ through the Haar probability  measure on $K$. Define a convolution operation $\ast_\Gamma$ on  $\mathcal V^\infty(G)^{G\times G}\otimes\Dens(\mathfrak g^*)$ 
	through the isomorphism
\begin{equation*}
	Q_\Gamma:=  q_\Gamma^*\otimes \sigma_\Gamma:\mathcal V^\infty(G_\Gamma)^{T_ \Gamma}\to \mathcal V^\infty(G)^{V}\otimes\Dens(\mathfrak g^*).
\end{equation*}
Then $\ast_\Gamma$ on $ \mathcal V^\infty(G)^{G\times G}\otimes\Dens(\mathfrak g^*)$ is independent of $\Gamma$. Similarly, the induced product 
\[\ast_\Gamma: \mathcal V^\infty(G)^{G\times G}\otimes\Dens(\mathfrak g^*)\times  \mathcal V^\infty(G)^{L_G}\to  \mathcal V^\infty(G)^{L_G}\]
is independent of $\Gamma$.
\end{Lemma}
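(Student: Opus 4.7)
The central observation is that $q_\Gamma^*$ gives a bijection between left-$V$-invariant smooth valuations on $G$ and left-$T_\Gamma$-invariant smooth valuations on $G_\Gamma$, compatible with the presentation by forms on $\mathbb P_+(\mathfrak g^*)$ with values in $\largewedge^\bullet\mathfrak g^*$. The task is then to compare, for two lattices $\Gamma, \Gamma' \subset V$, the Alesker--Bernig convolutions transported through $Q_\Gamma$ and $Q_{\Gamma'}$, and to check that the twist by $\sigma_\Gamma$ absorbs exactly the dependence on the particular Haar probability measure implicit in Theorem~\ref{mthm:compact}.

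Concretely, I would expand the Alesker--Bernig convolution on $G_\Gamma$ using Theorem~\ref{mthm:compact}. Two pieces of data in that formula depend on the Haar normalisation of $G_\Gamma$: the Hodge-star-type identification $\largewedge^{p-k}\mathfrak g^* \otimes \ori(\mathfrak g) \cong \largewedge^{n-p+k}\mathfrak g$ used to pass from $\tau_k$ to $\tilde\tau_k$, and the scalar $\mu(\psi) = \psi(G_\Gamma)$. Lifted to $G$, the Haar probability on $G_\Gamma$ corresponds to the Haar measure $\vol_\Gamma$ on $G$ for which $V/\Gamma$ (paired with the Haar probability on $K$) has unit volume, so that $\sigma_\Gamma = \vol_\Gamma^*$. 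The twist $\otimes \sigma_\Gamma$ then converts the $\vol_\Gamma$-dependent identification into the canonical one
\begin{align*}
  \largewedge^{p-k}\mathfrak g^* \otimes \ori(\mathfrak g) \otimes \Dens(\mathfrak g^*) \cong \largewedge^{n-p+k}\mathfrak g,
\end{align*}
while the scalar $\mu(\psi)$ coincides, by Proposition~\ref{prop:haar_direct}, with the $\tau_0$-component of the representing form, which is manifestly preserved under $q_\Gamma^*$. Thus $Q_\Gamma$ intertwines the Alesker--Bernig convolution on $G_\Gamma$ with an explicit formula on $G$ built from the Lie algebra $\mathfrak g$ and the canonical $\Dens(\mathfrak g^*)$ twist, and containing no residual dependence on $\Gamma$; comparing this formula for two lattices yields $\ast_\Gamma = \ast_{\Gamma'}$. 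The same argument handles the module action, since a left-$G$-invariant valuation on $G$ is in particular left-$V$-invariant and so descends to $\mathcal V^\infty(G_\Gamma)^{T_\Gamma}$, where the Alesker--Bernig convolution with a bi-invariant element preserves left-invariance.

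The main obstacle is purely bookkeeping: one must track carefully how the implicit Haar probability on $G_\Gamma$ enters the formula of Theorem~\ref{mthm:compact}, and confirm that each such occurrence is compensated exactly by a factor of $\sigma_\Gamma$. Once this reconciliation is carried out, the resulting intrinsic formula in fact matches the right-hand side of \eqref{eq_def_formula_convolution}, so that $\ast_\Gamma$ agrees with the convolution of Definition~\ref{mdef:convolution}; lattice independence is then immediate.
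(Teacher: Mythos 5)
Your plan matches the paper's proof: both transport the Alesker--Bernig formula of Theorem~\ref{mthm:compact} through $q_\Gamma^*$, note that the Haar normalization of $G_\Gamma$ enters only via the identification \eqref{eq:Haar_identification} and the scalar $\mu(\psi)$, and check that the $\otimes\,\sigma_\Gamma$ twist absorbs precisely this dependence. The paper packages the same bookkeeping by comparing two lattices directly and tracking the single scalar $\delta=\sigma_\Gamma/\sigma_{\Gamma'}$ to show $Q_{\Gamma'}^{-1}\circ Q_\Gamma$ is a convolution-algebra isomorphism, whereas you phrase it as producing a $\Gamma$-free intrinsic formula; the content is the same.
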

\proof
Let $\Gamma, \Gamma'$ be lattices. For the first statement we ought to show that $Q:=Q_{\Gamma'}^{-1}\circ Q_\Gamma:\mathcal V^\infty(G_{\Gamma})^{G_\Gamma\times G_\Gamma}\to\mathcal V^\infty(G_{\Gamma'})^{G_{\Gamma'}\times G_{\Gamma'}}$ is an isomorphism of convolution algebras. Denoting $\delta=\frac{\sigma_\Gamma}{\sigma_{\Gamma'}}$ and $q=(q_{\Gamma'}^*)^{-1}\circ q_\Gamma^*:\mathcal V^\infty(G_\Gamma) \to \mathcal V^\infty(G_{\Gamma'})$, we have $Q=\delta q$. 
	
Under the natural identifications, we have $\tau_{q\phi}=\tau_\phi, \omega_{q\phi}=\omega_\phi,c_{q \phi}=c_\phi$ and similarly for $\psi$. However, the isomorphism \eqref{eq:Haar_identification} makes use of the Haar measure. Since $q$ maps the Haar probability measure $\nu_\Gamma$ to $\delta^{-1}\nu_{\Gamma'}$, we get $\tilde \tau_{q \phi}=\delta^{-1} \tilde \tau_\phi, \tilde \omega_{q \phi}=\delta^{-1}\tilde \omega_\phi$ etc. By Definition \ref{def_convolution_of_forms} we have
\begin{displaymath}
	\tau_{q\phi}* \tau_{q \psi}=\delta^{-1} \tau_\phi * \tau_\psi, \quad \tau_{q\phi} * \omega_{q\psi}=\delta^{-1} \tau_\phi * \omega_\psi.
\end{displaymath}  
Clearly 
\begin{displaymath}
	\mu(q\psi)=\int_{G_{\Gamma'}}\psi=\delta^{-1} \int_{G_\Gamma}\psi=\delta^{-1} \mu(\psi).
\end{displaymath} 	Theorem \ref{mthm:compact} shows that $q\phi*q\psi=\delta^{-1}q(\phi * \psi)$, which implies $Q(\phi*\psi)=Q\phi * Q\psi$ as required. 

This completes the proof of the first statement. The proof of the second statement is completely analogous.
\endproof

We conclude that Definition \ref{mdef:convolution} is independent of the choice of lattice $\Gamma$.  To prove the independence of the decomposition into the compact and linear part, we first observe that the formula in terms of differential forms established by Theorem \ref{mthm:compact} holds verbatim in the general case $G=K\times V$.
 
 \begin{Proposition} 
 \label{prop_convolution_formula}
	Let $G=K\times V$ with $K$ compact and $V$ a linear space. Let $c_\phi,c_\psi \in \Dens(\mathfrak g^*), \tau_\phi\in \Omega^n_{\ori}(\p_G)^{G \times G} \otimes \Dens(\mathfrak g^*)$,  $\tau_\psi\in \Omega^n_{\ori}(\p_G)^{L_G}$. One has 
	\begin{equation} \label{eq_convolution_formula}
		\{c_\phi,\tau_\phi\}*\{c_\psi,\tau_\psi\}=\{c_\phi\mu(\psi)+\pi_*(\tau_\phi*\omega_\psi),\tau_\phi*\tau_\psi\},
	\end{equation}
	where $\omega_\psi \in \Omega^{n-1}_{\ori}(\p_G)^{L_G}$ is such that $d\omega_\psi=\tau_\psi- \pi^*\mu_\psi$  and $\pi_*\omega_\psi=c_\psi$.
\end{Proposition}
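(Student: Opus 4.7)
The plan is to reduce to the compact case of Theorem~\ref{mthm:compact} via the covering construction of Lemma~\ref{lem:lattices}. I fix an arbitrary lattice $\Gamma \subset V$ and consider the covering $q_\Gamma:G \to G_\Gamma$ with $G_\Gamma = K \times T_\Gamma$. Since bi-invariant (respectively left-invariant) forms on $\mathbb P_G$ are determined by their value on $\mathfrak g$, pullback by $q_\Gamma$ identifies them with the corresponding forms on $\mathbb P_{G_\Gamma}$, and similarly for valuations. I therefore write $\phi = q_\Gamma^*\phi' \otimes \sigma_\Gamma$ and $\psi = q_\Gamma^*\psi'$ for unique $\phi' \in \mathcal V^\infty(G_\Gamma)^{G_\Gamma \times G_\Gamma}$ and $\psi' \in \mathcal V^\infty(G_\Gamma)^{L_{G_\Gamma}}$; in particular, $\omega_\psi = q_\Gamma^*\omega_{\psi'}$ for the left-invariant primitive $\omega_{\psi'}$ provided by Proposition~\ref{prop_exact} on the compact group $G_\Gamma$.

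Theorem~\ref{mthm:compact} applied on $G_\Gamma$ then yields
\begin{equation*}
\phi' * \psi' = \{c_{\phi'}\mu(\psi') + \pi_*(\tau_{\phi'} * \omega_{\psi'}),\ \tau_{\phi'} * \tau_{\psi'}\},
\end{equation*}
with form convolutions computed via the Haar probability measure on $G_\Gamma$. By the definition of $*_\Gamma$ in Lemma~\ref{lem:lattices}, $\phi * \psi = q_\Gamma^*(\phi' * \psi')$. Since $q_\Gamma$ is a local diffeomorphism, $q_\Gamma^*$ commutes with $\pi_*$ and $D$, so pulling back the displayed formula will produce \eqref{eq_convolution_formula} once the form-level convolutions are shown to be compatible with $q_\Gamma^*$ after inserting the tensor factor $\sigma_\Gamma$.

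Verifying this compatibility is the main obstacle and requires careful tracking of density factors. The Hodge identification $\tau \mapsto \tilde\tau$ on $G_\Gamma$ is defined using the Haar probability measure, whereas on $G$ it is defined using $\sigma_\Gamma \in \Dens(\mathfrak g^*)$ coming from $\phi$. The key point to verify is that the pullback of the Haar probability measure on $G_\Gamma$, tensored with $\sigma_\Gamma$, coincides with the canonical element $\vol \otimes \vol^* \in \Omega^n(\mathbb P_G) \otimes \largewedge^n \mathfrak g$ used in Section~\ref{s:conv_forms}. Once this is established, a direct density-tracking computation---analogous to the one already carried out in the proof of Lemma~\ref{lem:lattices}---shows that $q_\Gamma^*(\tau_{\phi'} * \tau_{\psi'}) = \tau_\phi * \tau_\psi$ and $q_\Gamma^*(\tau_{\phi'} * \omega_{\psi'}) = \tau_\phi * \omega_\psi$, while $\mu(\psi') = \mu(\psi) \in \mathbb C$ under the canonical identifications since it is just the constant component $\tilde{\tau}_0$ of $\psi$. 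Combined with $q_\Gamma^*\pi_* = \pi_*q_\Gamma^*$, this produces formula \eqref{eq_convolution_formula}.
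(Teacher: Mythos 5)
Your proof is correct and follows essentially the same route as the paper: the paper's proof is a two-line citation of Proposition \ref{prop_exact}, Theorem \ref{mthm:compact}, and Lemma \ref{lem:lattices}, and you have simply unpacked what those citations amount to --- pulling the formula back from the compact quotient $G_\Gamma$ and tracking the $\Dens(\mathfrak g^*)$ factor through the Hodge identification, exactly as in the proof of Lemma \ref{lem:lattices}.
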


\proof
By Proposition \ref{prop_exact}, a form $\omega_\psi$ with the given properties exists. The validity of Equation \eqref{eq_convolution_formula} then follows at once from Theorem \ref{mthm:compact} and Lemma \ref{lem:lattices}.
\endproof

 \begin{Corollary}\label{cor:product_independent}
 	Let $G$ be isomorphic to the cartesian product of a compact group and a linear space. Then the convolution on $\mathcal V^\infty(G)^{G\times G}\otimes\Dens(\mathfrak g^*)$ only depends on the Lie group structure on $G$, that is, it is independent of the decomposition $G=K\times V$. In particular, $F(\phi\ast \psi)=F\phi\ast F\psi$ for any $F\in\mathrm{Aut}(G)$.
 \end{Corollary}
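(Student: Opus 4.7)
The strategy is to appeal to Proposition \ref{prop_convolution_formula}, which expresses the convolution as
\[\phi \ast \psi = \{\,c_\phi\, \mu(\psi) + \pi_\ast(\tau_\phi \ast \omega_\psi),\; \tau_\phi \ast \tau_\psi\,\},\]
with every ingredient defined in purely Lie-theoretic and differential-geometric terms on $G$ and $\mathbb P_G$. Granted this, both claims of the corollary follow at once: the first because no decomposition $G = K \times V$ or lattice $\Gamma \subset V$ enters the right-hand side, and the second by functoriality of pull-back under $F$.

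To justify the intrinsicness, I would review each ingredient. The convolution $\tau \ast \zeta$ of differential forms from Definition \ref{def_convolution_of_forms} is built from the adjoint vector fields $e^\sharp_i$ on $\mathbb P_+(\mathfrak g^\ast)$ and from the Hodge isomorphism twisted by $\Dens(\mathfrak g^\ast)$; the operator $\hat S_r$ is independent of the chosen basis, as recorded immediately after its definition. The Haar-component projection $\mu$ is canonical by Proposition \ref{prop:haar_direct}; a left-invariant primitive $\omega_\psi$ with the required properties exists by Proposition \ref{prop_exact}, and Proposition \ref{prop_convolution_formula} itself already confirms that the right-hand side is insensitive to its choice; the push-forward $\pi_\ast$, the Rumin operator, and the cosphere bundle $\mathbb P_G$ are determined by $G$ alone. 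Collecting these observations, the right-hand side of the displayed formula is defined without reference to any decomposition, and so is the left-hand side, proving the first assertion.

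For the automorphism statement, let $F \in \mathrm{Aut}(G)$ and write $F^\ast$ for the induced action on forms, valuations, and densities. The derivative $dF_e$ is a Lie algebra automorphism of $\mathfrak g$, and the canonical lift of $F$ to $\mathbb P_G$ is equivariant for every structure entering the formula: the Lie bracket and hence the adjoint vector fields $e^\sharp$; the Hodge star (once combined with the induced action on $\Dens(\mathfrak g^\ast)$); the projection $\pi$; the Rumin differential; and the push-forward $\pi_\ast$. Applying $F^\ast$ to both sides of the formula in Proposition \ref{prop_convolution_formula} therefore yields $F^\ast(\phi \ast \psi) = F^\ast\phi \ast F^\ast\psi$, which---with $F\phi$ read as $F^\ast\phi$---is the claimed identity.

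The substantive work has already been accomplished in Theorem \ref{mthm:compact} and Proposition \ref{prop_convolution_formula}; once the convolution is written in intrinsic differential-form language, the corollary is a formal consequence of the naturality of its constituent operations, and the only verification required is the basis-independence of $\hat S_r$, which was observed in Definition \ref{def_convolution_of_forms}.
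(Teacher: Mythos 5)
Your proposal is correct and follows essentially the same route as the paper: the paper's own proof simply cites Proposition~\ref{prop_convolution_formula} and notes that the formula~\eqref{eq_convolution_formula} is written purely in terms of the Lie group structure. Your more detailed walk through the ingredients (basis-independence of $\hat S_r$, canonicity of $\mu$, existence and insensitivity to the choice of $\omega_\psi$, naturality of $\pi_*$ and the Rumin operator) and the equivariance argument for $F\in\mathrm{Aut}(G)$ are exactly the checks the paper leaves implicit.
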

 \proof
 By Proposition \ref{prop_convolution_formula}, the convolution is given by Equation \eqref{eq_convolution_formula}, which only depends on the group structure of $G$. 
 \endproof
 
It is straightforward to extend the convolution of Lemma \ref{lem:lattices} to any unimodular Lie group that admits non-obvious bi-invariant valuations. Let us now take this step.
 
 \begin{Lemma} \label{lemma_reduce_to_connected}
 		Let $G$ be a Lie group and $G_0$ the identity component. Then the natural restriction map 
 		\begin{displaymath}
 			\mathcal V^\infty(G)^{G \times G} \to \mathcal V^\infty(G_0)^{G_0 \times G_0}
 		\end{displaymath}
 		is  injective. Its image consists of all $\mathrm{Ad}_G$-invariant $\phi\in\mathcal V^\infty(G_0)^{G_0 \times G_0}$.
 	\end{Lemma}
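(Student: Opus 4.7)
The proof splits naturally into injectivity and the image characterization, and my plan is to exploit the decomposition $G=\bigsqcup_{g\in G/G_0}gG_0$ into clopen cosets together with normality of $G_0$ in $G$, which ensures that $\mathbf{Ad}_g$ restricts to a self-diffeomorphism of $G_0$ for every $g\in G$.

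For injectivity, I would use additivity: any $P\in\mathcal P(G)$ decomposes as a finite disjoint union $P=\bigsqcup_g P_g$ with $P_g=P\cap gG_0$, and since the cosets are clopen, every smooth valuation satisfies $\phi(P)=\sum_g \phi(P_g)$. If $\phi$ is left-invariant and $\phi|_{G_0}=0$, then $\phi(P_g)=\phi(g^{-1}P_g)=0$ for every $g$, so $\phi$ vanishes identically.

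For the ``only if'' part of the image description, writing $\mathbf{Ad}_g=L_g\circ R_{g^{-1}}$ and using that $\mathbf{Ad}_g(G_0)=G_0$, bi-invariance of $\phi$ on $G$ restricts to $\mathrm{Ad}_G$-invariance of $\phi|_{G_0}$, in addition to the obvious $G_0\times G_0$-invariance. For the converse I plan to construct an explicit extension: given an $\mathrm{Ad}_G$-invariant $\psi\in\mathcal V^\infty(G_0)^{G_0\times G_0}$, define $\phi(P):=\sum_g \psi(g^{-1}P_g)$, where $g$ is any chosen representative of each coset $gG_0$ meeting $P$. Well-definedness is immediate from left $G_0$-invariance of $\psi$, while smoothness is inherited because $\phi$ is represented by the $L_G$-invariant forms obtained by left-translating the defining forms of $\psi$ to every coset. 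Left $G$-invariance then reduces to routine coset bookkeeping.

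The main obstacle, and the only place where both the normality of $G_0$ and the full $\mathrm{Ad}_G$-invariance of $\psi$ genuinely enter, is verifying right invariance of the extension: for $h\in G$ and $P_g\subset gG_0$, normality of $G_0$ gives $P_gh\subset ghG_0$, and then
\begin{align*}
\phi(P_gh)=\psi\bigl((gh)^{-1}P_gh\bigr)=\psi\bigl(\mathbf{Ad}_{h^{-1}}(g^{-1}P_g)\bigr)=\psi(g^{-1}P_g)=\phi(P_g),
\end{align*}
the penultimate equality being precisely the $\mathrm{Ad}_G$-invariance hypothesis on $\psi$. Combined with the forward implication, this yields the image characterization.
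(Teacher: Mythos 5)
Your proof is correct and takes essentially the same route as the paper: establish injectivity via additivity over the clopen cosets together with left-invariance, and then extend a given $\mathrm{Ad}_G$-invariant $\psi$ on $G_0$ to $G$ by left translation to each coset, with right-invariance being exactly where $\mathrm{Ad}_G$-invariance enters. Your write-up is somewhat more explicit about the coset decomposition, well-definedness, and smoothness, but the underlying construction coincides with the paper's $\psi|_{xG_0}=L_x^*\phi$.
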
	
 	
 	\proof
 	If the restriction of a bi-invariant valuation to $G_0$ vanishes, then by bi-invariance the restriction to every connected component of $G$ vanishes and hence the valuation itself is zero. This shows injectivity of the restriction map. 
 	
 	For $\phi\in\mathcal V^\infty(G_0)^{G_0 \times G_0}$, define $\psi\in \mathcal V^\infty (G)$ by setting $\psi|_{xG_0}=L_{x}^*\phi$, which is well-defined and left-invariant by the left-invariance of $\phi$. Clearly $\psi$ restricts to $\phi$. It is then easy to see that $\psi$ is right-invariant if and only if $\phi$ is $\mathrm{Ad}_G$-invariant.
 	 	\endproof 
 	
 It is easy to construct explicit examples where the restriction is not surjective. For instance, if $G=\R^2\rtimes \{\pm 1\}$, where $-1$ acts on $\R^2$ by $-\id$, then $\mathcal V^\infty(G_0)^{G_0 \times G_0}$ consists of all translation-invariant valuations, while $\mathcal V^\infty(G)^{G \times G}$ corresponds to the even valuations.

 \begin{Proposition}\label{prop:restrict_connected}
 	Let $G$ be a unimodular Lie group  that admits a non-obvious bi-invariant valuation, and let $G_0\subset G$ be the identity component. Then $G_0=K\times V$ for some compact Lie group $K$ and a linear space $V$, and the subspace of  $\mathcal V^\infty(G_0)^{G_0 \times G_0}\otimes\Dens(\mathfrak g^*)$
 	corresponding to the image of the restriction map 
 	\begin{displaymath}
 		\mathcal V^\infty(G)^{G \times G} \to \mathcal V^\infty(G_0)^{G_0 \times G_0}
 	\end{displaymath}
 	is closed under convolution.
 \end{Proposition}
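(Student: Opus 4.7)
The plan is to proceed in two steps. First, I would establish the decomposition $G_0 = K \times V$ by verifying that the restriction to $G_0$ of a non-obvious bi-invariant valuation on $G$ remains non-obvious, and then invoking Theorem~\ref{prop:no_biinvariant} together with Milnor's theorem \cite{milnor76}. To see the former: if $\phi \in \calV^\infty(G)^{G \times G}$ restricts to $a \chi + b\,\vol_G|_{G_0}$ on $G_0$ for some $a,b\in\CC$, then $\psi := \phi - a\chi - b\,\vol_G$ is a bi-invariant valuation on $G$ that vanishes on $G_0$. By left-invariance, for $P \subset xG_0$ one has $\psi(P) = \psi(x^{-1}P) = 0$, and since the cosets form an open-closed cover of $G$, additivity of smooth valuations gives $\psi \equiv 0$, contradicting the non-obviousness of $\phi$. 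Theorem~\ref{prop:no_biinvariant} applied to the connected group $G_0$ (the cases $\dim G_0 \le 1$ are automatic, as $G_0$ is then abelian and visibly of the form $K\times V$) together with \cite{milnor76} yields the decomposition $G_0 = K \times V$.

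For the second step, by Lemma~\ref{lemma_reduce_to_connected} the image of the restriction map is precisely the subspace $W \subset \calV^\infty(G_0)^{G_0 \times G_0}$ of valuations invariant under the conjugation action $\mathbf{Ad}_G$ of $G$ on the normal subgroup $G_0$. Since $G$ is unimodular, $|\det \Ad_g| = 1$ for every $g \in G$, so $\Ad_g$ acts trivially on $\Dens(\mathfrak g^*)$; consequently, the subspace of $\calV^\infty(G_0)^{G_0 \times G_0} \otimes \Dens(\mathfrak g^*)$ corresponding to the image is exactly $W \otimes \Dens(\mathfrak g^*)$, which coincides with the $\mathbf{Ad}_G$-invariant subspace of the tensor product.

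Closedness under convolution then follows formally from Corollary~\ref{cor:product_independent}. For each $g \in G$, normality of $G_0$ in $G$ ensures that $\mathbf{Ad}_g \colon G_0 \to G_0$ is a Lie group automorphism of $G_0$, and Corollary~\ref{cor:product_independent} with $F = \mathbf{Ad}_g$ yields the equivariance
\begin{displaymath}
\mathbf{Ad}_g^*(\phi \ast \psi) = \mathbf{Ad}_g^*\phi \ast \mathbf{Ad}_g^*\psi
\end{displaymath}
for all $\phi, \psi \in \calV^\infty(G_0)^{G_0 \times G_0} \otimes \Dens(\mathfrak g^*)$. Hence $\mathbf{Ad}_G$-invariance is preserved under convolution, proving that $W \otimes \Dens(\mathfrak g^*)$ is a subalgebra. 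The main subtlety I anticipate lies in the first step, in confirming that non-obviousness passes to the restriction (keeping track of how the Haar measure on $G$ restricts to $G_0$ and of the additivity of smooth valuations across cosets); the closedness in the second step is then an essentially direct consequence of the previously established automorphism equivariance of the convolution.
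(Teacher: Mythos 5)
Your proposal is correct and follows essentially the same route as the paper: Lemma~\ref{lemma_reduce_to_connected} (injectivity of the restriction, which you re-derive by the same coset/additivity argument) plus Theorem~\ref{prop:no_biinvariant} and Milnor for the first claim, and then Corollary~\ref{cor:product_independent} applied to the automorphisms $\mathbf{Ad}_g$ of $G_0$ to show that the $\Ad_G$-invariant subspace is a subalgebra. The only cosmetic difference is that you spell out the low-dimensional case and the fact that unimodularity makes $\Dens(\mathfrak g^*)$ a trivial $\Ad_G$-module, points the paper leaves implicit.
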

 \proof
The first statement follows  at once from Lemma \ref{lemma_reduce_to_connected} and Theorem \ref{mainthm_existence_biinvariant}. As $\mathbf{Ad}_g$ is an automorphism of $G_0$ for all $g\in G$, we have by Corollary \ref{cor:product_independent} that $\mathbf{Ad}_g^*(\phi\ast \psi)=(\mathbf{Ad}_g^*\phi)\ast (\mathbf{Ad}_g^*\psi)$ for $\phi,\psi\in\mathcal V^\infty(G_0)^{G_0\times G_0}\otimes\Dens(\mathfrak g^*)$. Thus the $\mathrm{Ad}_G$-invariant subspace of $ \mathcal V^\infty(G_0)^{G_0 \times G_0}$, which by Lemma \ref{lemma_reduce_to_connected} is the image of the restriction, is closed under convolution.
 \endproof

We conclude that the convolution on $\mathcal V^\infty(G)^{G \times G}\otimes\Dens(\mathfrak g^*)$ is  well defined for any unimodular Lie group $G$ admitting  non-obvious bi-invariant smooth valuations and, by Proposition \ref{prop_exact} it is given by Equation  \eqref{eq_convolution_formula}.  Similarly one defines the module structure
\[\ast: \mathcal V^\infty(G)^{G\times G}\otimes\Dens(\mathfrak g^*)\times  \mathcal V^\infty(G)^{L_G}\to  \mathcal V^\infty(G)^{L_G}\]
in this case. If $G$ is unimodular with $\mathcal V^\infty(G)^{G\times G}=\Span\{\chi, \vol\}$, one immediately verifies using Proposition \ref{prop_exact} and $\chi=\{1,0\}$ that Definition  \ref{mdef:convolution} gives rise to a well-defined convolution operation satisfying $(\vol\otimes \vol^*)\ast\psi=\psi$ and $(\chi\otimes \vol^*)\ast \psi=(\mu(\psi) \vol^*)\chi$ for all $\psi\in\mathcal V^\infty(G)^{L_G}$.

\subsection{Convolution of smooth valuations}

We will now prove various properties of the convolution on valuations on unimodular Lie groups given by Definition \ref{mdef:convolution}. %We begin by establishing its compatibility with the projection $\mu$ defined by Proposition \ref{prop:haar_direct} and with the group inverse map.}

\begin{Proposition}\label{pro:mu_character}
Let $G$ be a unimodular Lie group. For any two valuations $\phi,\psi\in\mathcal V^\infty(G)^{G \times G} \otimes \Dens(\mathfrak g^*)$ it holds that $\mu(\phi\ast \psi)=\mu(\phi)\mu(\psi)$.
\end{Proposition}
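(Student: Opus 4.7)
The plan is to read the identity off the convolution formula in Definition \ref{mdef:convolution} via a short degree-counting argument for the convolution of forms. Writing $\phi * \psi = \{c_{\phi*\psi}, \tau_{\phi*\psi}\}$ with $\tau_{\phi*\psi} = \tau_\phi * \tau_\psi$, and recalling from the proof of Proposition \ref{prop:haar_direct} that $\mu(\phi*\psi)$ is nothing but the constant value of the $0$-component $\widetilde{\tau_{\phi*\psi}}_0$ (which is guaranteed to be constant by Corollary \ref{cor:closed_form}), the task reduces to isolating the $j = 0$ term in Definition \ref{def_convolution_of_forms} applied to $\tau_\phi * \tau_\psi$.

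I would then observe that since both $\tau_\phi$ and $\tau_\psi$ are of degree $n$, the rescaled components satisfy $\tilde\tau_{\phi,k} \in \Omega^k(\p_+(\mathfrak g^*), \largewedge^k \mathfrak g)$ and $\tilde\tau_{\psi,l} \in \Omega^l(\p_+(\mathfrak g^*), \largewedge^l \mathfrak g)$. In the expansion
\begin{align*}
\widetilde{(\tau_\phi * \tau_\psi)}_0 = \sum_{k+l \geq 0} \epsilon^{n,n}_{k,l,0} \, \hat S_{k+l}(\tilde\tau_{\phi,k} \otimes \tilde\tau_{\psi,l}),
\end{align*}
the operator $\hat S_{k+l}$ involves the interior products $\iota_{e_K^\sharp}\tilde\tau_{\phi,k}$ with $|K| = k + l$, which vanish identically unless $k + l \leq k$ for form-degree reasons, and likewise $\iota_{e_K^*}\tilde\tau_{\psi,l}$ vanishes unless $k + l \leq l$ for exterior-algebra-degree reasons. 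These inequalities force $k = l = 0$, so only the term $\epsilon^{n,n}_{0,0,0}\, \tilde\tau_{\phi,0} \cdot \tilde\tau_{\psi,0}$ survives.

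Finally, the sign computes to $\epsilon^{n,n}_{0,0,0} = (-1)^{(2n)(2n)} = 1$, and $\tilde\tau_{\phi,0}, \tilde\tau_{\psi,0}$ are, by construction of $\mu$ in Proposition \ref{prop:haar_direct} and the Hodge-star identification \eqref{eq_def_hodge_star}, the constants $\mu(\phi)$ and $\mu(\psi)$. This yields the desired identity. I do not anticipate a genuine obstacle: the only delicate point is to track the $*^{-1}$ isomorphism that translates the $\Dens(\mathfrak g^*)$-twisted top-form-valued data $\tau_{\phi,0}$ into the scalar $\mu(\phi)$, which is a purely notational bookkeeping exercise.
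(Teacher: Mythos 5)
Your proof is correct and follows exactly the paper's own argument: isolate the $j=0$ component of Definition~\ref{def_convolution_of_forms}, use the form-degree constraint on $\iota_{e_K^\sharp}\tilde\tau_{\phi,k}$ and the exterior-degree constraint on $\iota_{e_K^*}\tilde\tau_{\psi,l}$ to force $k=l=0$, and read off the product of the two constants from Proposition~\ref{prop:haar_direct}. The sign check $\epsilon^{n,n}_{0,0,0}=1$ matches the paper's $(-1)^{k(l+1)}|_{k=l=0}=1$.
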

\proof
By Definitions \ref{mdef:convolution} and \ref{def_convolution_of_forms}, we have
\begin{displaymath}
	\widetilde{(\tau_{\phi * \psi})}_0=	\widetilde{(\tau_{\phi} * \tau_{\psi})}_0=\sum_{k,l \geq 0} (-1)^{k(l+1)} \hat S_{k+l}((\tilde \tau_\phi)_k \otimes (\tilde \tau_\psi)_l).
\end{displaymath}
Since $(\tilde \tau_\phi)_k \in \Omega^k(\p_+(\mathfrak g^*),\largewedge^k \mathfrak g)$, all terms with $l>0$ vanish. Similarly, since $(\tilde \tau_\psi)_l \in \Omega^l(\p_+(\mathfrak g^*),\largewedge^l \mathfrak g)$, all terms with $k>0$ vanish. The only remaining term is for $k=l=0$ and hence
$$(\widetilde{\tau_ {\phi * \psi}})_0=\hat S_0((\tilde \tau_\phi)_0 \otimes (\tilde \tau_\psi)_0)=(\tilde \tau_\phi)_0 \cdot (\tilde \tau_\psi)_0.$$
\endproof

\begin{Lemma} \label{lemma_inversion}
	Let $G$ be a unimodular Lie group and let $\Inv:G \to G$ be the inverse map. Then for any $\phi,\psi \in \mathcal V^\infty(G)^{G\times G} \otimes\Dens(\mathfrak g^*)$ we have 
	\begin{equation}\label{eq:inverse_convolution}
		\Inv^*(\phi * \psi)=\Inv^*\psi * \Inv^*\phi. 
	\end{equation}
\end{Lemma}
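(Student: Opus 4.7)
The plan is to reduce, via the chain of constructions by which the general convolution is built from the compact case, to the Alesker--Bernig convolution on compact Lie groups, where the identity is essentially formal.

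First, for compact $G$, the convolution is defined as $\phi * \psi = m_*(\phi \boxtimes \psi)$, where $m : G \times G \to G$ is the multiplication. The identity $(gh)^{-1} = h^{-1}g^{-1}$ says that $\Inv \circ m = m \circ \sigma \circ (\Inv \times \Inv)$, where $\sigma : G \times G \to G \times G$ is the swap $(g,h) \mapsto (h,g)$. Because $\Inv$ is an involutive diffeomorphism, $\Inv^* = \Inv_*$ on smooth valuations, so functoriality of push-forward together with the standard identities $\sigma^*(\phi \boxtimes \psi) = \psi \boxtimes \phi$ and $(\Inv \times \Inv)^*(\phi \boxtimes \psi) = \Inv^*\phi \boxtimes \Inv^*\psi$ yields
\[
\Inv^*(\phi * \psi) = (\Inv \circ m)_*(\phi \boxtimes \psi) = m_*\bigl(\Inv^*\psi \boxtimes \Inv^*\phi\bigr) = \Inv^*\psi * \Inv^*\phi.
\]

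Second, for $G = K \times V$ with $K$ compact and $V$ a linear space, I would invoke Lemma \ref{lem:lattices}. For any lattice $\Gamma \subset V$, the covering map $q_\Gamma : G \to G_\Gamma$ intertwines the inverse maps, $q_\Gamma \circ \Inv_G = \Inv_{G_\Gamma} \circ q_\Gamma$, so the isomorphism $Q_\Gamma = q_\Gamma^* \otimes \sigma_\Gamma$ satisfies $\Inv_G^* \circ Q_\Gamma = Q_\Gamma \circ \Inv_{G_\Gamma}^*$. Writing $\phi = Q_\Gamma \phi'$, $\psi = Q_\Gamma \psi'$ and using that $Q_\Gamma$ is an algebra isomorphism together with the compact case applied on $G_\Gamma$, the identity follows on $G$.

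Third, the full connected case splits by Theorem \ref{mainthm_existence_biinvariant}: either $G = K \times V$, which was just handled, or $\mathcal V^\infty(G)^{G\times G} = \Span\{\chi, \vol\}$. In the latter case, $\Inv^*\chi = \chi$ trivially and $\Inv^*\vol = \vol$ by unimodularity; the convolution on the two-dimensional space $\Span\{\chi,\vol\} \otimes \Dens(\mathfrak g^*)$, governed by items (i) and (iii) of Theorem \ref{mthm:unimodular}, is commutative, so both sides of \eqref{eq:inverse_convolution} agree immediately. Finally, for a general unimodular $G$ with identity component $G_0$, Lemma \ref{lemma_reduce_to_connected} and Proposition \ref{prop:restrict_connected} provide an injective, convolution-preserving restriction map to $\mathcal V^\infty(G_0)^{G_0 \times G_0} \otimes \Dens(\mathfrak g^*)$; since $\Inv$ preserves $G_0$, this map is also $\Inv^*$-equivariant, and \eqref{eq:inverse_convolution} on $G$ reduces to the connected case on $G_0$.

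The main technical point is the clean compact case: one must verify that $(\Inv \times \Inv)_*(\phi \boxtimes \psi) = \Inv^*\phi \boxtimes \Inv^*\psi$ and $\sigma_*(\phi \boxtimes \psi) = \psi \boxtimes \phi$ as generalized valuations and that push-forward composes as expected. These are standard properties of Alesker's operations on smooth/generalized valuations under diffeomorphisms, and I expect they can be cited from the references \cite{alesker_bernig, alesker_val_man3}; once they are in hand, everything above becomes a short bookkeeping argument.
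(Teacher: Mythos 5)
Your proof follows essentially the same route as the paper's: the key identity $\Inv\circ m = m\circ S\circ(\Inv\times\Inv)$ pushed forward through $\phi\boxtimes\psi$ in the compact case, then reduction to that case via Lemma \ref{lem:lattices} (with $q_\Gamma$ intertwining the inverse maps) and Proposition \ref{prop:restrict_connected}, together with a quick disposal of the trivial case $\mathcal V^\infty(G)^{G\times G}=\Span\{\chi,\vol\}$. Your version only fills in the bookkeeping steps that the paper leaves implicit.
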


\proof 
We may assume that $G$ admits non-obvious bi-invariant valuations. Assume first that $G$ is compact. Denoting $S:G\times G\to G\times G$ the involution $S(x,y)=(y,x)$  and by $m:G \times G \to G$ the multiplication map, we have an equality of maps
\[ \Inv\circ m= m\circ S\circ (\Inv\times \Inv): G\times G\to G.\]
Pushing-forward $\phi\boxtimes \psi$ using both presentations of this map and noting that $\Inv_*=\Inv^*$ yields \eqref{eq:inverse_convolution}. The general case then follows at once by Lemma \ref{lem:lattices}, Equation \eqref{eq_convolution_formula}, and Proposition \ref{prop:restrict_connected}.
\endproof

Let us now proceed with the proofs of Theorem \ref{mthm:unimodular} and Proposition \ref{mthm:compatibility}. We will need the following statement.
 
\begin{Lemma}\label{lem:translation_surjective}
	Let $M$ be a compact smooth manifold and $V$ a real vector space of finite dimension. Consider the obvious action of $V$ on $X:=M\times V$. Then  the restriction of the quotient map $q_V:\mathcal W_k(X)\to \Gamma(X,\Val_k^\infty(TX))$ to the $V$-invariant elements, namely $q_V:\mathcal W_k(X)^V\to \Gamma(X,\Val_k^\infty(TX))^V$, is onto.	
	Furthermore, fix a Lebesgue measure on $V$ and consider the averaging map $A_1:\mathcal V^\infty_{c}(X)\to 	\mathcal V^\infty(X)^V$ given by $A_1\phi=\int_V x^*\phi dx$. Denote $\mathcal W_{k,c}(X):=\calW_k(X)\cap \calV_c^\infty(X)$. Then the restriction $A_1:\mathcal W_{k,c}(X) \to \mathcal W_k(X)^V$ is onto. In particular $A_1$ itself is onto.
\end{Lemma}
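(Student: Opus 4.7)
The plan is to exhibit the desired lifts by combining multiplication by a bump function on $V$ with the averaging map $A_1$, exploiting the fact that the symbol map $q_V$ intertwines both operations. Fix $\rho\in C_c^\infty(V)$ with $\int_V\rho\,dx=1$ and let $\tilde\rho\in C_c^\infty(X)$ be its pullback along $X\to V$, $(m,v)\mapsto v$. We use three standard compatibilities: (a) the $C^\infty(X)$-module structure on $\calV^\infty(X)$ preserves the filtration and satisfies $q_V(f\cdot\eta)=f\cdot q_V(\eta)$ pointwise on $X$; (b) each translation $T_x:X\to X$, $x\in V$, preserves $\calW_k$ and satisfies $q_V\circ T_x^*=T_x^*\circ q_V$; (c) for $\phi\in\calW_{k,c}(X)$ and any fixed $P\in\calP(X)$, the integrand $x\mapsto T_x^*\phi(P)$ is continuous with compact support, so $A_1\phi=\int_V T_x^*\phi\,dx$ defines an element of $\calW_k(X)^V$ (closedness of $\calW_k$) and $q_V$ may be passed through the integral.

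For the second statement, given $\psi\in\calW_k(X)^V$, set $\phi:=\tilde\rho\cdot\psi\in\calW_{k,c}(X)$. Since $T_x^*$ is a ring endomorphism on $\calV^\infty(X)$ and $\psi$ is $V$-invariant,
\[
A_1\phi=\int_V T_x^*\tilde\rho\cdot T_x^*\psi\,dx=\Bigl(\int_V T_x^*\tilde\rho\,dx\Bigr)\cdot\psi=1\cdot\psi=\psi,
\]
using that $\int_V\rho(v+x)\,dx=1$ is the constant function $1$ on $X$. This proves surjectivity of $A_1:\calW_{k,c}(X)\to\calW_k(X)^V$. Since every $V$-invariant smooth valuation lies in $\calW_0(X)^V$, the case $k=0$ yields the ``in particular'' assertion that $A_1:\calV_c^\infty(X)\to\calV^\infty(X)^V$ is onto.

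For the first statement, apply the general surjectivity of $q_V:\calW_k(X)\to\Gamma(X,\Val_k^\infty(TX))$ to lift $s$ to some $\eta\in\calW_k(X)$, and set $\phi:=\tilde\rho\cdot\eta\in\calW_{k,c}(X)$. Then $A_1\phi\in\calW_k(X)^V$ by the discussion above, and using compatibilities (a), (b), (c) together with $V$-invariance of $s$,
\[
q_V(A_1\phi)=\int_V T_x^*q_V(\phi)\,dx=\int_V(T_x^*\tilde\rho)\cdot(T_x^*s)\,dx=\Bigl(\int_V T_x^*\tilde\rho\,dx\Bigr)\cdot s=s,
\]
so $A_1\phi$ is the desired $V$-invariant lift. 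The only delicate points in the argument are the standard compatibilities of $q_V$ with multiplication by smooth functions and with pullbacks under diffeomorphisms, together with the justification for exchanging $q_V$ with the integral $\int_V\,dx$; all of these are routine once one recalls that the filtration is defined locally and each $T_x$ is an isometry of that local structure, with the integral converging absolutely by the compact-support argument in (c).
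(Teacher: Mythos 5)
Your proof hinges on compatibility (a): that there is a $C^\infty(X)$-module structure on $\calV^\infty(X)$ preserving the filtration, commuting with translations, and satisfying $q_V(f\cdot\eta)=f\cdot q_V(\eta)$. No such canonical operation exists. The natural candidate $[[\mu,\omega]]\mapsto[[f\mu,\pi^*f\cdot\omega]]$ is ill-defined: the kernel of the map $(\mu,\omega)\mapsto[[\mu,\omega]]$ is described by $D\omega+\pi^*\mu=0$ and $\pi_*\omega=0$, and the Rumin operator $D$ (like $d$) is not $C^\infty$-linear, so multiplying $\omega$ by $\pi^*f$ produces extra terms and the result changes with the chosen representative. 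Already on $X=\RR$ one checks directly that $[[f\mu,\pi^*f\cdot\omega]]$ depends on the pair $(\mu,\omega)$ and not just on $[[\mu,\omega]]$. More abstractly, smooth functions sit in $\calW_0/\calW_1$, not in $\calV^\infty(X)$, so ``$f\cdot\psi$'' is well-defined only modulo one step down the filtration; this is exactly why both the computation $A_1(\tilde\rho\cdot\psi)=\psi$ and $q_V(A_1(\tilde\rho\cdot\eta))=s$ cannot be carried out at the level of valuations.

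The paper sidesteps this by multiplying by $\rho$ only where it is unambiguous: against sections of the vector bundle $\Val_k^\infty(TX)$ (a genuine $C^\infty(X)$-module), and against smooth measures (the case $k=\dim X$). The first claim is then proved via a commutative square relating $q_c,\ q_V,\ A_1$ and the averaging $A_2$ on sections, while the second requires a reverse induction on $k$: given $\psi\in\calW_k(X)^V$, one uses the already-proved surjectivity of $q_V$ to hit its symbol, and then corrects the error, which lies in $\calW_{k+1}(X)^V$, by the inductive hypothesis. Your shortcut would eliminate that induction, which is a signal that something is off; if you want to push your approach through, you would at minimum have to fix a translation-equivariant choice of lift $C^\infty(X)\ni f\mapsto\eta_f\in\calW_0(X)$ and then carefully track that $\eta_{\tilde\rho}\cdot\psi$ averages to $\eta_1\cdot\psi$ with $\eta_1=\chi$, which is nontrivial and not ``routine.'' As written, the argument has a genuine gap at compatibility (a).
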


\proof 
Recall that the natural map $\mathcal W_k(X)\to \Gamma(X,\Val_k^\infty(TX))$ is onto.  By \cite[Proposition 6.2.1]{alesker_val_man4}, which asserts the existence of partitions of unity for smooth valuations, the restriction of this map to compactly supported elements, namely
\[q_c:\mathcal W_{k, c}(X)\to \Gamma_c(X,\Val_k^\infty(TX)),\]
is onto as well.

Note that the averaging map $A_2:\Gamma_c(X,\Val_k^\infty(TX))\to \Gamma(X,\Val_k^\infty(TX))^V$ defined analogously to $A_1$ is onto. Indeed, take any $\rho\in C^\infty_c(V)$ with $\int_V \rho(x)dx=1$. Then one has $A_2(\rho\cdot  \phi)=\phi$ for all $\phi\in \Gamma(X,\Val_k^\infty(TX))^V$. 

Now consider the commutative diagram 
\begin{equation*}
	\begin{tikzcd}
		\mathcal W_{k, c}(X) \arrow[r,"q_c"] \arrow[d,"A_1"] & \Gamma_c(X,\Val_k^\infty(TX))\arrow[d,"A_2"]   \\
		\mathcal W_{k}(X)^V\arrow[r,"q_V"] & \Gamma(X,\Val_k^\infty(TX))^V
	\end{tikzcd}
\end{equation*}
As $q_c$ and $A_2$ are both onto, the same must hold for $q_V$.

For the second assertion, we use reverse induction on $k$. The base of the induction is $k=\dim X$. As $\phi\in\mathcal W_{\dim X}(X)^V$ is a measure, we may take $\rho\in C^\infty_c(X)$ as before to get $A_1(\rho\cdot \phi)=\phi$.  Assuming the assertion for $k+1$, take  $\phi\in\mathcal W_k(X)^V$. Choose $\psi \in \mathcal W_{k, c}(X)$ with $A_2q_c\psi=q_V\phi$.
It follows that $q_VA_1\psi =q_V\phi$, that is $A_1\psi-\phi\in\mathcal W_{k+1}(X)^V$. By the induction assumption we may find $\psi'\in \mathcal W_{k+1,c}(X)$ with $A_1\psi'=A_1\psi-\phi$, so that $\phi= A_1(\psi-\psi')$.
\endproof

Let $G=K\times V$ be a product of a compact Lie group and a linear space. In invariant terms, the averaging map $A_1$ from Lemma \ref{lem:translation_surjective} with $X=G$ is $A:\mathcal V^\infty_c(G)\to \mathcal V^\infty(G)^{V}\otimes \Dens(\mathfrak g^*)$ given by  $A(\phi)=\int_V x^*\phi dx \otimes \sigma^*$, where $dx$ is an arbitrary Lebesgue measure on $V$, identified with $\sigma\in\Dens(\mathfrak g)$ via the Haar probability measure on $K$. The restriction of $A$ to $\mathcal V^\infty_c(G)^{K\times K}$  clearly coincides with the averaging map defined before Proposition \ref{mthm:compatibility}.

\proof[Proof of Theorem \ref{mthm:unimodular}] 
 We have seen in Section \ref{sec:general_convolution} that the convolution is well defined. If $\mathcal V^\infty(G)^{G\times G}=\Span\{\chi, \vol\}$, then all the properties are trivially verified. Thus we assume henceforth that $G$ admits non-obvious bi-invariant smooth valuations. If $G$ is connected, it is the product of a compact group and a linear space by Theorem \ref{mainthm_existence_biinvariant} and the convolution coincides with the one of Lemma \ref{lem:lattices}, from which associativity and continuity easily follow. The general case follows from Proposition \ref{prop:restrict_connected}. In particular, the bi-invariant valuations are closed under convolution.

The properties (i) and (ii) are immediate from Definition \ref{mdef:convolution}. As for item (iii), we have $c_\chi=1$, $\tau_\chi=0$, $\mu(\chi)=0$ and hence \eqref{eq_def_formula_convolution} gives  $(\chi\otimes \vol^*)\ast \psi=(\mu(\psi) \vol^*)\chi$. To show $\phi\ast\chi=\mu(\phi)\chi$, we use  Lemma \ref{lemma_inversion} to find that
\[\phi\ast\chi=\inv^*(\inv^*\chi\ast \inv^*\phi)=\inv^*(\chi\ast \inv^*\phi)=\mu(\inv^*\phi)\inv^*\chi=\mu(\phi)\chi. \] 

To prove (iv), write $G=K\times V$. We first note that, according to  Lemma \ref{lem:lattices}, the equality $(\phi*\psi)(e)=\mu(\phi\cdot \inv^*\psi)$ follows from the compact case, which is Lemma \ref{lem:convolution_pairing}. To verify the pairing is perfect, let $\phi\otimes\sigma\in\mathcal V^\infty(G)^{G\times G}\otimes\Dens(\mathfrak g^*)$ be non-zero. Assume that $\phi\in\mathcal W_k(G)\setminus\mathcal W_{k+1}(G)$, and let $\phi_0\in\Val^\infty_k(\mathfrak g)^{\Ad_K}$ be the corresponding principal symbol at the unit element. By translation-invariant Alesker--Poincar\'e duality and the compactness of $K$, we may find $\psi_0 \in \Val^\infty_{n-k}(\mathfrak g)^{\Ad_K}$ such that $\langle \phi_0, \psi_0\rangle \in\Dens(\mathfrak g)$ is non-zero. Let $\overline\psi \in \Gamma(G, \Val^\infty_{n-k}(TG))^{L_G}$ be the left-invariant field defined by $\psi_0$. We then use Lemma \ref{lem:translation_surjective} to find $\psi\in\mathcal V^\infty(G)^V$ with principal symbol $\overline\psi$. 
By construction, $\langle \phi\otimes \sigma, \psi\otimes \sigma\rangle\in\mathcal M(G)^G\otimes \Dens(\mathfrak g^*)^2=\Dens(\mathfrak g^*)$ is non-zero, and we may average over the action of $K\times K$ to guarantee that $\psi\in\mathcal V^\infty(G)^{G\times G}$. This concludes the proof.
\endproof

\begin{Remark} \normalfont
By construction, the convolution of Definition \ref{mdef:convolution} naturally extends the Alesker--Bernig convolution on compact Lie groups. In order to relate it to the Bernig--Fu convolution on linear spaces, we have to compare our conventions with the ones from \cite{bernig_fu06}.

First, let $*_V$ be the usual Hodge star operator on $V=\RR^n$, $\gamma \in \Omega^k(V)^{tr}$, and $\kappa \in \Omega^l(S^{n-1})$. Then the operator $*_1$ is defined in \cite{bernig_fu06} by 
	\begin{displaymath}
		*_1(\pi_1^*\gamma \wedge \pi_2^*\kappa)=(-1)^{\binom{n-k}{2}} \pi_1^*(*_V\gamma) \wedge \pi_2^* \kappa. 
	\end{displaymath}
	On the other hand, the Hodge star $*^{-1}$ from Equation \eqref{eq_def_hodge_star} satisfies 
	\begin{align*}
		*^{-1} (\pi_1^*\gamma \wedge \pi_2^*\kappa) & =(-1)^{kl} *^{-1} (\pi_2^*\kappa \wedge \pi_1^*\gamma)\\
		& =(-1)^{kl} (\pi_2^*\kappa \wedge \pi_1^*(*_V\gamma))\\
		& =(-1)^{kl} (-1)^{l(n-k)}  \pi_1^*(*_V\gamma) \wedge \pi_2^*\kappa.
	\end{align*}
	We thus have 
	\begin{equation} \label{eq_diff_hodge_stars}
		*_1=(-1)^{\binom{n-k}{2}+nl} \cdot *^{-1}
	\end{equation}
	on translation-invariant forms on $\R^n \times S^{n-1}$ of bi-degree $(k,l)$.
	
Second, the wedge product from \cite{bernig_fu06} is the one on $\R^n \times S^{n-1}$. Namely, for $\gamma_i \in \Omega^{k_i}(V)^{tr}$ and $\kappa_i \in \Omega^{l_i}(S^{n-1})$, $i=1,2$, one has
	\begin{displaymath}
		(\pi_1^*\gamma_1 \wedge \pi_2^*\kappa_1) \wedge (\pi_1^*\gamma_2 \wedge \pi_2^*\kappa_2)=(-1)^{l_1k_2} \pi_1^*(\gamma_1 \wedge \gamma_2) \wedge \pi_2^*(\kappa_1 \wedge \kappa_2).
	\end{displaymath}
	Our wedge product (temporarily denoted by $\wedge'$) satisfies 
	\begin{align*}
		&(\pi_1^*\gamma_1 \wedge \pi_2^*\kappa_1) \wedge' (\pi_1^*\gamma_2 \wedge \pi_2^*\kappa_2) \\
		&\quad =(-1)^{k_1l_1+k_2l_2} (\pi_2^*\kappa_1 \wedge \pi_1^*\gamma_1) \wedge' (\pi_2^*\kappa_2 \wedge \pi_1^*\gamma_2)\\
		& \quad=(-1)^{k_1l_1+k_2l_2} \pi_2^*(\kappa_1 \wedge \kappa_2) \wedge \pi_1^*(\gamma_1 \wedge \gamma_2)\\
		& \quad=(-1)^{k_1l_1+k_2l_2+(l_1+l_2)(k_1+k_2)}\pi_1^*(\gamma_1 \wedge \gamma_2) \wedge \pi_2^*(\kappa_1 \wedge \kappa_2).
	\end{align*}
	Hence 
	\begin{equation} \label{eq_diff_wedge_products}
		\wedge'=(-1)^{l_2k_1} \wedge
	\end{equation} 
	on translation-invariant forms on $\R^n\times S^{n-1}$ of bi-degree $(k_1,l_1), (k_2,l_2)$.
\end{Remark}

\proof[Proof of Proposition \ref{mthm:compatibility}] 

(i) Assume $G$ is compact and let $\phi \in \mathcal V(G)^{G \times G}$. Write $\phi=\{c_\phi,\tau_\phi\}=[[\mu_\phi,\omega_\phi]]$ and consider the valuation
$$\phi':=\phi \otimes \vol(G) \vol^* \in  \mathcal V(G)^{G \times G} \otimes \Dens(\mathfrak g^*).$$
Then $\phi'=\{c_\phi',\tau_\phi'\}=[[\mu_\phi',\omega_\phi']]$, where the prime forms are obtained from the original ones by multiplying with $\vol(G)\vol^*$. From \eqref{eq:Haar_identification} and \eqref{eq:Hodge-1} it follows that $\tilde \tau_\phi=\widetilde{\tau_\phi'}$ and $\tilde \omega_\phi=\widetilde{\omega_\phi'}$.
 Proposition \ref{mthm:unimodular}(i) then implies that the two convolution products coincide. 
	
(ii) Since $V$ is commutative, the higher order terms in $\tau_\phi * \omega_\psi$ and $\tau_\phi * \tau_\psi$ vanish and \eqref{eq_convolution_formula} reduces to \cite[Equations (37)--(39)]{bernig_fu06}, taking into account \eqref{eq_diff_hodge_stars} and \eqref{eq_diff_wedge_products}.
	
(iii)  By \cite[Section 3.1]{alesker_val_man2} there is an isomorphism of graded vector spaces
	\begin{equation} \label{eq_graded_isomorphism_vals}
		I=\bigoplus_{k=0}^n I_k:\mathrm{gr} \ (\mathcal V^\infty(G)^{G \times G}) \cong \Val^\infty(\mathfrak g)^{\Ad G}
	\end{equation} 
	that can be described in terms of differential forms as follows.  Let $[\phi] \in \mathcal W_k(G)^{G \times G}/\mathcal W_{k+1}(G)^{G \times G}$ denote the equivalence class of $\phi \in  \mathcal W_k(G)^{G \times G}$. 
	
	If $\phi \in \mathcal W_0(G)^{G \times G}$, then $I_0([\phi])=\phi(\{e\}) \chi$. Let $\phi=\{0,\tau\} \in \mathcal W_k(G)^{G \times G}$ with $1 \leq k \leq n$ and $\tau \in \Omega^n_{\ori}(G \times \p_+(\mathfrak g^*))^{G \times G}$. Let  $\tau=\sum_{i=0}^{n-k} \tau_i$ with $\tau_i \in \Omega^{n-i,i}_{\ori}(G \times \p_+(\mathfrak g^*))$ be the decomposition of $\tau$. The restriction of $\tau_{n-k}$ to the sphere over the identity element of $G$ can be identified with a closed, vertical, translation-invariant, and $\Ad G$-invariant form in $\Omega^{k,n-k}_{\ori}(\mathfrak g \times \p_+(\mathfrak g^*))$, which defines $I_k([\phi]) \in \Val_k^\infty(\mathfrak g)^{\Ad G}$.
	
	Similarly, if $\psi=[[0,\omega]]  \in \mathcal W_l(G)^{G \times G}$ with $1 \leq l \leq n-1$ and $\omega \in \Omega^{n-1}_{\ori}(G \times \p_+(\mathfrak g^*))^{G \times G}$, then we decompose $\omega=\sum_{i=0}^{n-1-l} \omega_i$ with $\omega_i \in \Omega_{\ori}^{n-1-i,i}(G \times \p_+(\mathfrak g^*))$. 
	
	The restriction of $\omega_{n-l-1}$ to the sphere over the identity defines a translation-invariant, $\Ad G$-invariant form in $\Omega_{\ori}^{l,n-1-l}(\mathfrak g \times \p_+(\mathfrak g^*))$, which is the form defining $I_l([\psi])$. 
	
	Tensorizing with $\Dens(\mathfrak g^*)$ we get an isomorphism of graded vector spaces
	\begin{equation} \label{eq_graded_isomorphism_vals_compact}
		\tilde I=\bigoplus_{k=0}^n \tilde I_k:\mathrm{gr} \ (\mathcal V(G)^{G \times G} \otimes \Dens(\mathfrak g^*)) \cong \Val^\infty(\mathfrak g)^{\Ad G} \otimes \Dens(\mathfrak g^*).
	\end{equation} 
	
	We claim that \eqref{eq_graded_isomorphism_vals_compact} is compatible with the convolution products on both sides, i.e., that for  any $\phi \in \mathcal W_k(G)^{G \times G} \otimes \Dens(\mathfrak g^*)$ and $\psi \in \mathcal W_l(G)^{G \times G} \otimes \Dens(\mathfrak g^*)$ we have 
	\begin{equation} \label{eq_compatibility_convolution_products}
		\tilde I_{k+l-n}([\phi * \psi])=\tilde I_k([\phi]) * \tilde I_l([\psi]).
	\end{equation}
	Note that $\phi * \psi \in \mathcal W_{k+l-n}(G)^{G \times G} \otimes \Dens(\mathfrak g^*)$, so the left-hand side is well defined.  
	
	If $k+l<n$, then both sides of the equation vanish. If $k=n$ or $l=n$, then \eqref{eq_compatibility_convolution_products} is immediate from the fact that $\vol \otimes \vol^*$ is the unit in $\mathcal V^\infty(G)^{G \times G} \otimes \Dens(\mathfrak g^*)$, and its image under $\tilde I_n$ is the unit in $\Val^\infty(\mathfrak g^*) \otimes \Dens(\mathfrak g^*)$. 
	
	Suppose next that $k+l \geq n+1$. Note first that the lowest degree component of $\tau_\phi * \tau_\psi$ equals $(\widetilde{\tau_\phi})_{n-k} \wedge (\widetilde{\tau_\psi})_{n-l}$, since $\epsilon^{n,n}_{n-k,n-l,2n-k-l}=1$. Up to signs, the claim is then obvious from Proposition \ref{prop_convolution_formula}, \cite[Proposition 2.7]{bernig_fu06}, and the above description of the map $\tilde I$ in terms of differential forms. Using \eqref{eq_diff_hodge_stars} and \eqref{eq_diff_wedge_products} one easily checks that the signs match.
	
	Finally, suppose that $k+l=n$. The lowest degree component of $\tau_\phi * \omega_\psi$ equals $(-1)^{n+k} (\widetilde{\tau_\phi})_{n-k} \wedge (\widetilde{\omega_\psi})_{n-1-l}$, since $\epsilon^{n,n-1}_{n-k,n-l-1,n-1}=(-1)^{n+k}$. The claim then follows from \cite[Equation (37)]{bernig_fu06}. 

(iv) First note that since $V$ lies in the centralizer of $\mathfrak g$, we have $A(\phi\ast\psi)=A\phi\ast \psi=\phi\ast A\psi$. We will show that $A(\phi\ast\psi)=A\phi\ast A\psi$. 

	Choose a lattice $\Gamma \subset V$ and denote $T=V/\Gamma$, $G_T=K\times T$. Let $Q:\mathcal V^\infty(G_\Gamma)^{G_\Gamma\times G_\Gamma}\to \mathcal V^\infty(G)^{G\times G}\otimes \Dens(\mathfrak g^*)$ be as in Definition \ref{mdef:convolution}. 
	Write $A=Q\circ A_T\circ A_\Gamma$, where $A_\Gamma:\mathcal V^\infty_c(G)^{K\times K}\to \mathcal V^\infty(G_\Gamma)^{K\times K}$ is $A_\Gamma\phi=\sum_{\gamma\in\Gamma}\gamma^*\phi$, and $A_T: \mathcal V^\infty(G_\Gamma)^{K\times K}\to \mathcal V^\infty(G_\Gamma)^{G_\Gamma\times G_\Gamma}$ is averaging over the torus. 
	
	Write $\theta=A\phi=Q\theta_T$, so that $\theta_T=A_TA_\Gamma\phi$. We have
	\[\theta\ast A\psi=Q\theta_T\ast QA_TA_\Gamma\psi=Q(\theta_T\ast A_TA_\Gamma\psi)=QA_T(A_\Gamma\phi\ast A_\Gamma\psi).\]
	
	It remains to check that $A_\Gamma\phi\ast A_\Gamma\psi=A_\Gamma(\phi\ast\psi)$.  Indeed $A_\Gamma\phi\ast \psi=A_\Gamma(\phi\ast \psi)$ as $\Gamma$ lies in the centralizer of $\mathfrak g$. The remaining equality $A_\Gamma\phi\ast \psi=A_\Gamma\phi\ast A_\Gamma\psi$ is transparent if $\spt \psi$ lies in a fundamental domain of $\Gamma$, and we may reduce to this case by linearity and the partition of unity property of valuations. 
	
Finally, by Lemma \ref{lem:translation_surjective} we have $\eta=A\psi$ for some compactly supported $\psi$, whence \[A\phi\ast\eta=A\phi\ast A\psi=\phi\ast A\psi=\phi\ast \eta.\] 
\endproof

\section{Convolution on $S^3$}

\label{s:S3}

By Proposition \ref{prop_finite_dimensionality}, there are only two compact Lie groups with a finite-dimensional algebra of bi-invariant valuations, namely $S^3$ and $\mathrm{SO}(3)$. As $\mathrm{SO}(3)$ is a quotient of $S^3$, we will only describe the convolution algebra of bi-invariant valuations on $S^3$ and leave the easy modifications in the other case to the reader.

It is well known that the space $\calV(S^3)^{S^3\times S^3}$ is $4$-dimensional and spanned by the intrinsic volumes $\mu_i$, $i=0,\dots,3$, see \cite{bernig_faifman_solanes_part2}. 

\begin{Lemma}
	\label{lem:commutativity}
	The convolution on $\mathcal V^\infty(S^3)^{S^3 \times S^3}$ is commutative. 
\end{Lemma}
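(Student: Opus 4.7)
The plan is to deduce commutativity from the anti-homomorphism property of the inversion map established in Lemma \ref{lemma_inversion}, combined with the fact that on $S^3$ every bi-invariant smooth valuation is invariant under the pull-back by $\Inv$. Explicitly, Lemma \ref{lemma_inversion} gives
\begin{equation*}
\Inv^*(\phi * \psi) = \Inv^*\psi * \Inv^*\phi
\end{equation*}
for $\phi,\psi\in \mathcal V^\infty(S^3)^{S^3\times S^3}$, so if one verifies that $\Inv^*\eta=\eta$ for every such $\eta$, and uses that $\phi*\psi$ is itself bi-invariant (Theorem \ref{mthm:unimodular}), one gets $\phi*\psi=\Inv^*(\phi*\psi)=\psi*\phi$.

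It thus remains to show that $\Inv^*$ acts as the identity on $\mathcal V^\infty(S^3)^{S^3\times S^3}$. First I would use the fact (stated just before the lemma) that this $4$-dimensional space is spanned by the intrinsic volumes $\mu_0,\dots,\mu_3$ of the bi-invariant round metric. Since $\Inv$ is a Lie group anti-automorphism with $d\Inv|_e=-\id_{\mathfrak g}$, and since it intertwines left and right translation via $\Inv\circ L_g=R_{g^{-1}}\circ \Inv$, any bi-invariant Riemannian metric on $S^3$ is preserved by $\Inv$. Consequently $\Inv$ is an isometry of the round metric, and intrinsic volumes being metric invariants, $\Inv^*\mu_k=\mu_k$ for all $k$. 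By linearity, $\Inv^*\eta=\eta$ for every $\eta\in \mathcal V^\infty(S^3)^{S^3\times S^3}$, which completes the argument.

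I do not expect any serious obstacle: the only non-formal input is the isometry property of $\Inv$ for the round metric, which is a direct consequence of bi-invariance together with $d\Inv|_e=-\id$. The spanning statement for the intrinsic volumes is quoted from the cited reference and also follows from the transitivity of $S^3\times S^3$ on the unit tangent bundle used in the proof of Proposition \ref{prop_finite_dimensionality}.
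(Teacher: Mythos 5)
Your proof is correct and follows essentially the same route as the paper: invoke Lemma \ref{lemma_inversion} together with the fact that $\Inv$ is an isometry of the round metric, so that $\Inv^*$ fixes each intrinsic volume $\mu_k$ and hence acts as the identity on $\mathcal V^\infty(S^3)^{S^3\times S^3}$. Your additional explanation of why $\Inv$ is an isometry is fine, though the paper simply takes this as known.
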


\begin{proof}
	Since the group inverse $\inv$ is an isometry, we have $\inv^*\mu_i=\mu_i$ and the claim follows from Lemma \ref{lemma_inversion}
\end{proof}

Let us denote $\chi_X(Y)=\chi(X\cap Y)$, for $X,Y\in\calP(S^3)$, and introduce the Crofton valuations. 
\begin{align*}
	\nu_0&=\chi,\\
	\nu_1&=\int_{\Gr_2(S^3)}\chi_EdE,\\
	\nu_2&=\int_{\Gr_1(S^3)}\chi_LdL,\\
	\nu_3&=\frac{1}{2\pi^2}\mu_3.
\end{align*}
Here $\Gr_j(S^3)$ is the space of $j$-dimensional great spheres, and $dE, dL$ are the Haar probability measures.

\begin{Lemma} \label{lemma_dictionnary_mu_nu}
	One has
\begin{align}
\label{eq:mu0}	\mu_0 & =\nu_0,\\
\label{eq:mu1}	\mu_1 & =\pi \nu_1+ \pi \nu_3,\\
\label{eq:mu2}	\mu_2 & =2\pi \nu_2,\\
\label{eq:mu3}	\mu_3 & = 2\pi^2 \nu_3.
\end{align}
In particular, the Crofton valuations $\nu_0,\dots,\nu_3$ form a basis of $\calV^\infty(S^3)^{S^3 \times S^3}$.
\end{Lemma}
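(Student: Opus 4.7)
The first identity $\mu_0 = \nu_0$ holds by definition (both equal $\chi$), and the last, $\mu_3 = 2\pi^2\nu_3$, is just the normalization $\nu_3 := \mu_3/(2\pi^2)$ since $\mu_3$ is the Riemannian volume and $\vol(S^3)=2\pi^2$. For the two middle identities, my plan is to exploit the four-dimensionality of $\calV^\infty(S^3)^{S^3\times S^3}$: since a bi-invariant smooth valuation on $S^3$ is determined by the four functions $r \mapsto \phi(B_r)$ on the family of geodesic balls (together with $\phi(\{e\})$), it will suffice to verify each identity after evaluating both sides on $B_r$.

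The values $\nu_k(B_r)$ admit explicit expressions: since $B_r \cap E$ is either empty or a geodesic ball in $E$ (hence contractible), $\chi(B_r \cap E)=\mathbf{1}[B_r \cap E \neq \emptyset]$, so $\nu_k(B_r)$ equals the invariant probability that a random totally geodesic $(3-k)$-sphere meets $B_r$. Identifying $\Gr_{3-k}(S^3)$ with the real Grassmannian $G(4-k,4)$ and using that the squared projection length of a fixed unit vector onto a uniform random $j$-plane in $\R^4$ follows the Beta distribution with parameters $(j/2, (4-j)/2)$, a direct computation would yield
$$
\nu_1(B_r) = \frac{2r+\sin 2r}{\pi},\quad \nu_2(B_r) = \sin^2 r,\quad \nu_3(B_r) = \frac{2r - \sin 2r}{2\pi}.
$$
I would then compute the corresponding $\mu_k(B_r)$ from the spherical Weyl tube formula, or equivalently from the Chern--Gauss--Bonnet representation of the Lipschitz--Killing curvatures, using that $\partial B_r$ is a geodesic $2$-sphere with both principal curvatures equal to $\cot r$ and area $4\pi\sin^2 r$; this gives $\mu_2(B_r)=2\pi\sin^2 r$ and $\mu_1(B_r) = 3r+\tfrac{1}{2}\sin 2r$. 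Comparing with the values above verifies $\mu_2 = 2\pi\nu_2$ and $\mu_1 = \pi\nu_1 + \pi\nu_3$.

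The basis claim then follows at once: the four derived identities express $\{\mu_i\}_{i=0}^3$ in terms of $\{\nu_j\}_{j=0}^3$ via a transition matrix which, after a harmless reordering, is triangular with nonzero diagonal entries, so $\{\nu_j\}$ is also a basis of $\calV^\infty(S^3)^{S^3\times S^3}$. The main potential obstacle is the bookkeeping of constants in the tube formula for $\mu_1$ on $S^3$; this becomes routine once the explicit geometry of $\partial B_r$ is unpacked, and could alternatively be bypassed by appealing to standard references in spherical integral geometry.
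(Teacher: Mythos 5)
Your proof is correct in substance, but it takes a genuinely different route from the paper. The paper first uses the Euler--Verdier involution as a parity operator to constrain the shape of the expansion, reducing the problem to finding four scalar constants (writing $\nu_1 = a\mu_1+b\mu_3$ and $\nu_2 = c\mu_0 + d\mu_2$), and then determines these by evaluating both sides on $\{p\}, S^1, S^2, S^3$, for which the intersection numbers are elementary ($\chi(S^i)=1+(-1)^i$ and the Klain--Rota formulas for $\mu_i(S^i)$, $\mu_i(B^n)$). Your argument instead evaluates every valuation on the one-parameter family of geodesic balls $B_r$, computes $\nu_k(B_r)$ via the Beta$(j/2,(4-j)/2)$ law for the squared projection of a unit vector onto a random $j$-plane in $\mathbb{R}^4$, and computes $\mu_k(B_r)$ via the spherical Weyl/Lipschitz--Killing formula. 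I checked your claimed values: $\nu_1(B_r)=\frac{2r+\sin 2r}{\pi}$, $\nu_2(B_r)=\sin^2 r$, $\nu_3(B_r)=\frac{2r-\sin 2r}{2\pi}$, $\mu_1(B_r)=3r+\frac12 \sin 2r$, $\mu_2(B_r)=2\pi\sin^2 r$ all come out right, and they do verify \eqref{eq:mu1} and \eqref{eq:mu2}. What the paper's method buys you is that the template evaluations are almost trivial and the normalization of $\mu_1$ is reduced to standard Euclidean ball formulas; what your method buys you is a self-contained geometric-probability derivation of the $\nu_k$ and a visibly analytic verification.

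Two small points you should make explicit to close the argument fully. First, the reduction to geodesic balls requires that the four functions $r\mapsto\mu_k(B_r)$ be linearly independent; this is quickly seen since they have distinct leading orders $r^0,r^1,r^2,r^3$ in their Taylor expansions at $r=0$. Second, your Crofton computation for $\nu_1$ implicitly assumes $B_r\cap E$ is contractible whenever nonempty; this is true only for $r<\pi/2$ (for $r\geq\pi/2$ the ball may contain $E$ entirely, contributing $\chi=2$). Since the linear-independence check above already works on arbitrarily small $r$, it is harmless to restrict to $r<\pi/2$ throughout, but this should be said. Finally, fixing the volume-term coefficient in the spherical formula $\mu_1(K)=\frac{1}{2\pi}\int_{\partial K}H\,dA+\frac{3}{2\pi}\vol(K)$ is exactly the ``bookkeeping'' you flag; it amounts to knowing $\mu_1(S^3)=3\pi$, which is the same normalization input the paper's proof uses.
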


\proof
First, \eqref{eq:mu0} and \eqref{eq:mu3} are obvious. Second, since $\mu_0,\mu_2,\nu_2$ are eigenvectors of the Euler--Verdier involution with eigenvalue 1 and $\mu_1,\mu_3,\nu_1$ eigenvectors with eigenvalue -1, there are $a,b,c,d\in\CC$ such that
\begin{align*}
\nu_1&=a\mu_1+b\mu_3,\\
\nu_2&=b\mu_0+d\mu_2.
\end{align*}
We will use the template method to determine the constants. To this end, recall that for any $0\leq i\leq n$ one has
\begin{align*}
\chi(S^i)&=1+(-1)^i,\\
\mu_i(S^i)&=(i+1)\omega_{i+1},\\
\mu_i(B^n)&=\binom ni\frac{\omega_n}{\omega_{n-i}},
\end{align*}
see \cite[Theorem 9.2.4]{klain_rota}. It follows that 
$$
\nu_1(S^1)=\chi(S^0)=2, \quad \mu_1(S^1)=2\pi,\quad \mu_3(S^1)=0,
$$
and
$$
\nu_1(S^3)=\chi(S^2)=2, \quad \mu_1(S^3)=2\mu_1(B^4)=3\pi,\quad \mu_3(S^3)=2\pi^2.
$$
Consequently, $2=2\pi a$ and $2=3\pi a+2\pi^2 b$, and \eqref{eq:mu1} follows. Similarly, for any point $p\in S^3$,
$$
\nu_2(\{p\})=0, \quad \mu_0(\{p\})=1,\quad \mu_2(\{p\})=0,
$$
and
$$
\nu_2(S^2)=\chi(S^0)=2, \quad \mu_0(S^2)=2,\quad \mu_2(S^2)=4\pi
$$
gives $c=0$ and $2=2c+4\pi d$, and proves \eqref{eq:mu2}.
\endproof

\begin{Lemma} \label{lemma_ev}
	Let $G$ be a compact Lie group. If $\phi,\psi \in \mathcal V^\infty(G)^{G\times G}$ are eigenvectors of the Euler--Verdier involution with eigenvalues $\epsilon_\phi,\epsilon_\psi$, respectively, then $\phi* \psi$ is an eigenvector with eigenvalue $(-1)^{\dim G}\epsilon_\phi\epsilon_\psi$.
\end{Lemma}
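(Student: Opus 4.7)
The plan is to verify the eigenvalue equation in the unique $\{c,\tau\}$ representation of valuations, first for the form part $\tau$ and then separately for the function part $c$. The step for $\tau$ reduces to a direct manipulation with differential forms on $\mathbb P_G$, while the $c$-part requires a separate argument using Alesker--Poincar\'e duality.

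First I would express the Euler--Verdier involution in the $\{c,\tau\}$ pair. Writing $\phi = [[M_\phi, \omega_\phi]]$ and using that $\pi \circ a = \pi$ with the antipodal map of degree $(-1)^n$ on the $(n-1)$-dimensional spherical fibers, a direct computation gives $\pi_*(a^*\omega_\phi) = (-1)^n\pi_*\omega_\phi$ and $Da^*\omega_\phi = a^*D\omega_\phi$, so
\[
\sigma\{c_\phi,\tau_\phi\} = \{c_\phi,\,(-1)^n a^*\tau_\phi\}.
\]
Consequently $\sigma\phi = \epsilon_\phi\phi$ is equivalent to $a^*\tau_\phi = (-1)^n\epsilon_\phi\tau_\phi$ together with $(1-\epsilon_\phi)c_\phi=0$, and analogously for $\psi$.

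Next I would show that the antipodal map commutes with the convolution of invariant forms: $a^*(\tau*\zeta) = (a^*\tau)*(a^*\zeta)$. Under the trivialization $\mathbb P_G \cong G\times \mathbb P_+(\mathfrak g^*)$, $a$ acts as $\id_G \times a$. The key point is that the vector fields $e_i^\sharp$ are $a$-invariant: at $[\xi]$, the differential $da$ acts as negation, so $a_*[\ad_{e_i}^*\xi] = -[\ad_{e_i}^*\xi] = [\ad_{e_i}^*(-\xi)] = e_i^\sharp|_{a[\xi]}$. Therefore $\iota_{e_K^\sharp}$ commutes with $a^*$, while $\iota_{e_K^*}$ and the Hodge isomorphism $*^{-1}$ act only on the $\largewedge^\bullet\mathfrak g$-coefficients and commute with $a^*$ for free. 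Combined with naturality of the wedge product, this yields $a^*\hat S_r(\tilde\tau\otimes\tilde\zeta) = \hat S_r(a^*\tilde\tau\otimes a^*\tilde\zeta)$, whence the intertwining relation in Definition \ref{def_convolution_of_forms}. Applying it to $\tau_{\phi*\psi} = \tau_\phi*\tau_\psi$ yields $a^*\tau_{\phi*\psi} = \epsilon_\phi\epsilon_\psi\tau_{\phi*\psi}$, so the $\tau$-parts of $\sigma(\phi*\psi)$ and $(-1)^n\epsilon_\phi\epsilon_\psi(\phi*\psi)$ coincide.

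It remains to verify $c_{\phi*\psi} = (-1)^n\epsilon_\phi\epsilon_\psi c_{\phi*\psi}$, which is automatic when $(-1)^n\epsilon_\phi\epsilon_\psi = 1$; I expect this to be the main obstacle since the naive eigenvalue computation on $\tau$ alone leaves an asymmetric constraint on the scalar $c_{\phi*\psi}$. Assume therefore $(-1)^n\epsilon_\phi\epsilon_\psi = -1$. Since $\phi*\psi$ is bi-invariant, $c_{\phi*\psi}$ is a bi-invariant smooth function on $G$; reducing to the identity component via Lemma \ref{lemma_reduce_to_connected} if needed, it is constant and equals $c_{\phi*\psi}(e) = (\phi*\psi)(\{e\})$. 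By Theorem \ref{mthm:unimodular}(iv), this equals $\mu(\phi\cdot\inv^*\psi)$. Using the multiplicativity of $\sigma$ under the Alesker product and its commutation with pull-back by the diffeomorphism $\inv$, one obtains $\sigma(\phi\cdot\inv^*\psi) = \epsilon_\phi\epsilon_\psi(\phi\cdot\inv^*\psi)$. Since $\mu(\sigma\alpha) = \sigma\alpha(G) = (-1)^n\alpha(G) = (-1)^n\mu(\alpha)$ for closed $G$, applying $\mu$ gives $\mu(\phi\cdot\inv^*\psi)\bigl((-1)^n - \epsilon_\phi\epsilon_\psi\bigr) = 0$, and in our regime the bracket is non-zero, so $c_{\phi*\psi}(e) = 0$ and hence $c_{\phi*\psi}\equiv 0$.
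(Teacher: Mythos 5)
Your proof is correct, but it takes a substantially longer and more computational route than the paper's own two-line argument. The paper proves the lemma directly from the definition $\phi*\psi = m_*(\phi\boxtimes\psi)$: the Euler--Verdier involution is multiplicative under exterior products, so $\sigma(\phi\boxtimes\psi)=\epsilon_\phi\epsilon_\psi\,\phi\boxtimes\psi$, and push-forward along $m:G\times G\to G$ shifts the eigenvalue by $(-1)^{\dim(G\times G)-\dim G}=(-1)^{\dim G}$. You instead unwind everything in the $\{c,\tau\}$ representation and work with Theorem~\ref{mthm:compact}'s explicit convolution of invariant forms, which forces you to verify that the antipodal map commutes with the $\hat S_r$ operations (correctly, via $a$-invariance of the fields $e_i^\sharp$ and the fact that $a$ does not touch the $\largewedge^\bullet\mathfrak g$-coefficients) and then to handle the scalar piece $c_{\phi*\psi}$ separately, using $c_{\phi*\psi}(e)=\mu(\phi\cdot\inv^*\psi)$ together with multiplicativity of $\sigma$ under the Alesker product and $\mu\circ\sigma=(-1)^n\mu$. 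All of these steps are valid; minor remarks: for compact $G$ the identity $c_{\phi*\psi}(e)=\mu(\phi\cdot\inv^*\psi)$ is more directly Lemma~\ref{lem:convolution_pairing} (no connectedness hypothesis or $\Dens(\mathfrak g^*)$-twist needed) rather than Theorem~\ref{mthm:unimodular}(iv), and your derivation already extracts the constraint $(1-\epsilon_\phi)c_\phi=0$ which you do not ultimately use. What your route buys is an explicit form-level intertwining $a^*(\tau*\zeta)=(a^*\tau)*(a^*\zeta)$, which could be of independent interest; what it costs is dependence on Theorem~\ref{mthm:compact}, Lemma~\ref{lem:convolution_pairing}, and the multiplicativity of $\sigma$ under the Alesker product, none of which the paper's conceptual argument needs.
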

\proof
Clearly, $\phi\boxtimes \psi$ has eigenvalue $\epsilon_\phi\epsilon_\psi$. Under the push-forward by the multiplication map $m:G\times G\to G$, the Euler--Verdier eigenvalue shifts by $(-1)^{\dim (G\times G)-\dim G} =(-1)^{\dim G}$.
\endproof

\begin{Proposition}\label{prop:nu_table}
	In $\mathcal V^\infty(S^3)^{S^3 \times S^3}$, the following identities hold:
	\begin{align}
\label{eq:nuk3}		\nu_k * \nu_3 & = \nu_k, \quad k=0,\ldots,3,\\
\label{eq:nu11}			\nu_1\ast\nu_1&=4\nu_3,\\
\label{eq:nu00}			\nu_0\ast\nu_0&=0,\\
\label{eq:nu02}			\nu_0\ast\nu_2&=0,\\
\label{eq:nu22}			\nu_2\ast \nu_2&=\frac{\pi^2}{4}(\nu_1-2\nu_3),\\
\label{eq:nu01}			\nu_0\ast\nu_1&=2\nu_0,\\
\label{eq:nu12}			\nu_1\ast\nu_2&=2\nu_0-2\nu_2.
	\end{align}
\end{Proposition}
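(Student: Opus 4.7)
The plan rests on four ingredients from the general theory: (i) Proposition \ref{mthm:compatibility}(i), which identifies the convolution unit on a compact group with the Haar probability measure; (ii) the formula of Theorem \ref{mthm:compact} applied to $\chi = \{1, 0\}$, which, since $\tau_\chi = 0$, collapses to $\chi * \psi = \mu(\psi) \chi = \psi(S^3) \chi$; (iii) the Euler--Verdier grading, which via Lemma \ref{lemma_ev} and the parity of $\dim S^3 = 3$ cuts each product down to a two-dimensional subspace; and (iv) the graded algebra isomorphism of Proposition \ref{mthm:compatibility}(iii), which identifies top symbols of convolutions on $S^3$ with Bernig--Fu products in $\Val^\infty(\mathfrak g)^{\Ad G} \otimes \Dens(\mathfrak g^*)$. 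I will also use the template $(\phi * \psi)(S^3) = \phi(S^3) \psi(S^3)$, derived by writing $\phi * \psi = (\phi * \psi) \cdot \chi$ and combining Lemma \ref{lem:convolution_pairing} with associativity, Lemma \ref{lem:commutativity}, and (ii).

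Identity \eqref{eq:nuk3} is immediate from (i), since $\nu_3 = \mu_3/\vol(S^3)$ is the Haar probability measure. Identities \eqref{eq:nu00}, \eqref{eq:nu02}, \eqref{eq:nu01} follow directly from (ii) together with the elementary evaluations $\chi(S^3) = 0$, $\nu_2(S^3) = \chi(S^1) = 0$, and $\nu_1(S^3) = \chi(S^2) = 2$.

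For the remaining three products I first record the Euler--Verdier eigenvalues: $\nu_0, \nu_2$ have eigenvalue $+1$ (as even combinations of intrinsic volumes) and $\nu_1, \nu_3$ have eigenvalue $-1$. By Lemma \ref{lemma_ev} with $n = 3$ odd, $\nu_1 * \nu_1$ and $\nu_2 * \nu_2$ lie in $\Span\{\nu_1, \nu_3\}$, while $\nu_1 * \nu_2 \in \Span\{\nu_0, \nu_2\}$, so two coefficients remain to be determined in each case. I pin down the leading coefficient via (iv): a standard Steiner-formula computation yields the Bernig--Fu products $\mu_1 * \mu_1 = 0$, $\mu_1 * \mu_2 = 2\mu_0$, $\mu_2 * \mu_2 = (\pi/2)\mu_1$ in $\Val^\infty(\mathbb R^3)$, and translating back via $\nu_1 \equiv \mu_1/\pi$ (mod $\mathcal W_2$), $\nu_2 = \mu_2/(2\pi)$, $\nu_0 = \mu_0$, together with the density twist $\vol(S^3)\vol^* = 2\pi^2 \vol^*$, gives leading coefficients $0$, $\pi^2/4$, $2$. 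Thus I can write
\begin{align*}
    \nu_1 * \nu_1 = b_1 \nu_3, \qquad
    \nu_2 * \nu_2 = \tfrac{\pi^2}{4} \nu_1 + b_2 \nu_3, \qquad
    \nu_1 * \nu_2 = 2\nu_0 + b_3 \nu_2.
\end{align*}

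The coefficients $b_1, b_2$ are then determined by the total-mass template: $(\nu_1 * \nu_1)(S^3) = \nu_1(S^3)^2 = 4 = b_1$, and $(\nu_2 * \nu_2)(S^3) = 0 = \frac{\pi^2}{4} \cdot 2 + b_2$, giving $b_2 = -\pi^2/2$. For $\nu_1 * \nu_2$ the template is degenerate since $\nu_0(S^3) = \nu_2(S^3) = 0$, so I instead exploit associativity: on one hand $(\nu_1 * \nu_1) * \nu_2 = 4\nu_3 * \nu_2 = 4\nu_2$, and on the other $\nu_1 * (\nu_1 * \nu_2) = \nu_1 * (2\nu_0 + b_3 \nu_2) = (4 + 2b_3)\nu_0 + b_3^2 \nu_2$, which forces $b_3 = -2$. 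The main technical point will be accurately tracking the factor $\vol(S^3)\vol^* = 2\pi^2 \vol^*$ in the top-symbol comparison through the graded isomorphism, since it is responsible for the three leading coefficients above; once this is handled, all other steps are short linear-algebra manipulations.
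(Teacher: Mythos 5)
Your proposal is cleaner than the paper's in several places: deriving $\nu_0*\nu_0$, $\nu_0*\nu_1$, $\nu_0*\nu_2$ directly from $\chi*\psi=\psi(S^3)\chi$ and $\mu(\phi*\psi)=\mu(\phi)\mu(\psi)$ is a nice shortcut, and using the graded isomorphism of Proposition \ref{mthm:compatibility}(iii) for the leading terms of $\nu_2*\nu_2$ (which lands in $\gr_1$) and $\nu_1*\nu_2$ (which lands in $\gr_0$) is legitimate and in fact more systematic than the paper, which only uses it once. However, there is a genuine gap in your determination of $\nu_1*\nu_1$.

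You assert that the graded isomorphism (iv) gives the coefficient of $\nu_1$ in $\nu_1*\nu_1$ as $0$, quoting $\mu_1*\mu_1=0$ in $\Val^\infty(\mathbb R^3)$. But the compatibility \eqref{eq_compatibility_convolution_products} for $\phi\in\mathcal W_k$, $\psi\in\mathcal W_l$ only computes the class of $\phi*\psi$ in $\gr_{k+l-n}$. For $k=l=1$, $n=3$ this is $\gr_{-1}=0$, so the identity $\tilde I_{-1}([\nu_1*\nu_1])=\tilde I_1([\nu_1])*\tilde I_1([\nu_1])=0$ is vacuous and tells you nothing about the component of $\nu_1*\nu_1$ in $\gr_1$, i.e., about the coefficient $a_1$ of $\nu_1$. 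In other words, $\mu_1*\mu_1=0$ holds in $\Val^\infty(\mathbb R^3)$ for trivial grading reasons ($\Val_{-1}=0$) and carries no information about the filtered algebra on $S^3$. One can check that the constraints you actually have — $\nu_2*\nu_2=\frac{\pi^2}{4}\nu_1+b_2\nu_3$ and $\nu_1*\nu_2=2\nu_0+b_3\nu_2$ from (iv), total mass $\mu(\phi*\psi)=\mu(\phi)\mu(\psi)$, and all associativity/commutativity relations — leave a one-parameter family of consistent algebras, parametrized by $a_1$ with $a_3=4-2a_1$, $b_3=a_1-2$, $b_2=-2a_1-\frac{\pi^2}{2}+\cdots$; they are not enough to pin down $a_1=0$. (Indeed your own associativity computation $(\nu_1*\nu_1)*\nu_2=\nu_1*(\nu_1*\nu_2)$ reduces to an identity, not an equation for $b_3$, once the unjustified $a_1=0$ is removed.)

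To close this gap you need one more concrete evaluation, which is exactly what the paper supplies: it evaluates $\nu_1*\nu_1$ on $X=S^1$ by a fibration argument and obtains $\nu_1*\nu_1(S^1)=0$, which together with $\nu_1(S^1)=2$, $\nu_3(S^1)=0$ forces $a_1=0$. (Equivalently, the paper evaluates $\nu_0*\nu_1$ and $\nu_1*\nu_2$ on $S^2$ and on a point to determine $b_3=-2$ directly.) With that single additional template, the rest of your argument — the total-mass constraint giving $a_3=4$, and associativity giving $b_3=-2$ — goes through unchanged, so the fix is local but indispensable.
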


\proof
Since $\nu_3$ is the Haar probability measure on $S^3$, \eqref{eq:nuk3} follows at once from Theorem \ref{mthm:unimodular}(i) and  Proposition \ref{mthm:compatibility}(i). To prove the remaining relations, we will keep the notation from the proof of Lemma \ref{lem:commutativity} and exploit a version of the template method used in the proof of Lemma \ref{lemma_dictionnary_mu_nu}. To this end, Lemma \ref{lemma_ev} implies that
\begin{align*}
	\nu_1\ast\nu_1&=a_1\nu_1+a_3\nu_3,\\
	\nu_0\ast\nu_0&=b_1\nu_1+b_3\nu_3,\\
	\nu_0\ast\nu_2&=c_1\nu_1+c_3\nu_3,\\
	\nu_2\ast \nu_2&=d_1\nu_1+d_3\nu_3,\\
	\nu_0\ast\nu_1&=a_0\nu_0+a_2\nu_2,\\
	\nu_1\ast\nu_2&=b_0\nu_0+b_2\nu_2
\end{align*}
for some constants $a_i,b_i,c_i,d_i\in\CC$. By definition, for all $A,B,X\in\calP(S^3)$ we have $\chi_A\ast \chi_B(X)= \chi_{A\times B}(m^{-1}X)$.

Take $X=S^3$. Then $\chi_A\ast \chi_B(S^3)=\chi_{A\times B}(S^3\times S^3)=\chi(A)\chi(B)$, so 
$$ \nu_0\ast\nu_0(S^3)=0,\quad \nu_0\ast\nu_2(S^3)=0,\quad \nu_1\ast\nu_1(S^3)=4,\quad\nu_2\ast \nu_2(S^3)=0.$$
As $\nu_1(S^3)=2$ and $\nu_3(S^3)=1$, we get the equations
\begin{align*}
	2a_1+a_3&=4,\\
	2b_1+b_3&=0,\\
	2c_1+c_3&=0,\\
	2d_1+d_3&=0.
\end{align*}

Now consider $X=S^1$ and note that $\nu_1(S^1)=2$, $\nu_j(S^1)=0$ for $j\neq 1$. First, to evaluate $\nu_1\ast\nu_1(S^1)$, take $E, E'\in\Gr_2(S^3)$ and denote $E=u^\perp$, $E'=\inv(v^\perp)$. For any $\theta\in X$, the sets $E$ and $\theta \cdot \mathrm{Inv}(E')$ intersect along a circle unless $u=\pm\theta v$. So if $u\not\in X v$, which is the generic case, the set $E\times E'\cap m^{-1}X$ fibers over $X$ with fiber $\{(x,x^{-1}\theta) \mid x\in \theta\cdot\mathrm{Inv}(E')\cap E\}=S^1$ over $\theta\in X$. Hence $\chi(E\times E'\cap m^{-1}X)=0$ for generic pairs $(E, E')$, and so
$$\nu_1\ast\nu_1(S^1)=0.$$
Thus we get the equation $2a_1=0$ which implies $a_1=0$ and $a_3=4$, proving \eqref{eq:nu11}.  Second, since $m^{-1}S^1=\{(x,x^{-1}\theta)\mid x \in S^3,\theta\in S^1\}\xrightarrow{S^1}S^3$, we have
\begin{displaymath}
	\nu_0\ast\nu_0(S^1)=\chi(m^{-1}S^1)=\chi(S^1)\chi(S^3)=0,
\end{displaymath}
so that $2b_1=0$ and so $b_1=b_3=0$, proving \eqref{eq:nu00}.  Similarly, we note that $S^3\times L\cap m^{-1}X$ fibers over $L \in \Gr_1(S^3)$ with fiber equal to $S^1$. Consequently, $\chi(S^3\times L\cap m^{-1}X)=0$ and so
$$\nu_0*\nu_2(S^1)=0.$$
It follows that $ 2c_1=0$ and hence $c_1=c_3=0$, proving \eqref{eq:nu02}.

To finish the proof of \eqref{eq:nu22}, we will use a different argument. Note that by Lemma \ref{lemma_dictionnary_mu_nu},
\begin{displaymath}
	\mu_2\ast\mu_2 =d_1(4\pi \mu_1-6\mu_3).
\end{displaymath}
Considering the standard volume on $S^3$ with total mass $2\pi^2$ and the canonical Lebesgue measure on $\RR^3$, the isomorphism from Proposition \ref{mthm:compatibility}(iii) takes the class $[\mu_k] \in \mathcal W^\infty_k(S^3)^{S^3 \times S^3}/\mathcal W^\infty_{k+1}(S^3)^{S^3 \times S^3}$ to $2\pi^2 \mu_k \in \Val_k(\R^3)$ and implies that on $\R^3$,
\begin{displaymath}
	2\pi^2 \mu_2 \ast 2\pi^2 \mu_2 =d_1 4\pi \cdot 2\pi^2 \mu_1.
\end{displaymath}
On the other hand, from \cite[Corollary 1.3]{bernig_fu06} we know that on $\R^3$,
\begin{displaymath} 
	\mu_2\ast \mu_2=\frac{\pi}{2}\mu_1.
\end{displaymath}
Hence $d_1=\frac{\pi^2}{4}$ and \eqref{eq:nu22} follows.

Next, fix $p\in S^3$ and take $X=\{p\}$. Since the inverse map is an isometry, we find that $\nu_0\ast\nu_1(\{p\})=2 $ and $\nu_1\ast \nu_2(\{p\})=2$. Thus  $a_0=b_0=2$.

Finally take $X=S^2$. For any $E\in\Gr_2(S^3)$ we have
	\begin{align*}
	\chi(S^3\times E\cap m^{-1}X)&=\chi\{(zy^{-1},y) \mid z\in X, y\in E\}\\
	&=\chi(\{(z,y) \mid z\in X, y\in E\})\\
	&=\chi(E)\chi(X)\\
	&=4.
	\end{align*}
	Consequently, $\nu_0\ast\nu_1(S^2)=4$, and so $2a_0+2a_2=4$ implies $a_2=0$ and \eqref{eq:nu01} follows. We will then reduce the computation of $\nu_1\ast\nu_2(S^2)$ to the case $\nu_1\ast\nu_1(S^1)$ as follows. Fix $E\in\Gr_2(S^3)$ and $L\in\Gr_1(S^3)$. Then
	\begin{align*}
	\chi(E\times L\cap m^{-1}X)&=\chi\{(x,y) \mid x\in E, y\in L, xy\in X\}\\
	&=\chi\{(x^{-1}, xy) \mid x\in E, y\in L, xy\in X\}\\
	&=\chi\{(x^{-1},z) \mid x^{-1}\in \inv(E), z\in X, x^{-1}z\in L\}\\
	&=\chi(\inv(E)\times X\cap m^{-1}L).
	\end{align*}
	Since the inverse map is an isometry, $E'=\inv(E)$ is another geodesic $2$-sphere. Consider the isometry group $\mathrm{Isom}(S^3)=\mathrm{SO}(4)$. Now one can write
	\begin{align*}
	\nu_1\ast\nu_2(X)&=\int_{\mathrm{Isom}(S^3)\times \mathrm{Isom}(S^3)\times \mathrm{Isom}(S^3)}\chi(g_1E\times g_2L\cap m^{-1}g_3X)\\
	&=\int_{\mathrm{Isom}(S^3)\times \mathrm{Isom}(S^3)\times \mathrm{Isom}(S^3)}\chi(g_1E\times g_2X\cap m^{-1}g_3L)\\
	&=\nu_1\ast\nu_1(L)\\
	&=0.
	\end{align*}
	Thus $2b_0+2b_2=0$ which implies $b_2=-2$ and \eqref{eq:nu12} follows.
\endproof

Combining Proposition \ref{prop:nu_table} and Lemma \ref{lemma_dictionnary_mu_nu} yields the convolution table in terms of intrinsic volumes as follows.
\begin{Corollary}
	\begin{align*}
		\mu_k \ast \mu_3 & = 2\pi^2 \mu_k,\quad k=0,\ldots,3,\\
		\mu_1\ast\mu_1&= 2\pi\mu_1+\frac32\mu_3,\\
		\mu_0\ast\mu_0&=0,\\
		\mu_0\ast\mu_2&=0,\\
		\mu_2\ast\mu_2&=\frac{\pi^2}{4}(4\pi \mu_1-6\mu_3),\\
		\mu_0\ast\mu_1&=3\pi \mu_0,\\
		\mu_1\ast\mu_2&=4\pi^2\mu_0-\pi\mu_2.
	\end{align*}
\end{Corollary}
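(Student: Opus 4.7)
The plan is entirely mechanical: translate each identity into the $\nu_k$ basis, apply the convolution table established in Proposition \ref{prop:nu_table}, and translate back. Concretely, I would substitute the dictionary of Lemma \ref{lemma_dictionnary_mu_nu}, namely $\mu_0=\nu_0$, $\mu_1=\pi\nu_1+\pi\nu_3$, $\mu_2=2\pi\nu_2$, and $\mu_3=2\pi^2\nu_3$, into $\mu_i\ast\mu_j$, expand bilinearly, apply Proposition \ref{prop:nu_table}, and finally re-express the result in the $\mu_k$ basis via the inverse dictionary $\nu_0=\mu_0$, $\nu_2=\tfrac{1}{2\pi}\mu_2$, $\nu_3=\tfrac{1}{2\pi^2}\mu_3$, and $\nu_1=\tfrac{1}{\pi}\mu_1-\tfrac{1}{2\pi^2}\mu_3$. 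The identities $\mu_k\ast\mu_3=2\pi^2\mu_k$ drop out immediately from the fact that $\nu_3$ is the Haar probability measure, hence the unit of the convolution algebra (Proposition \ref{mthm:compatibility}(i)).

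As an illustrative sample, for $\mu_1\ast\mu_1$ the expansion gives
\begin{align*}
\mu_1\ast\mu_1&=\pi^2(\nu_1\ast\nu_1+2\,\nu_1\ast\nu_3+\nu_3\ast\nu_3)\\
&=\pi^2(4\nu_3+2\nu_1+\nu_3)=2\pi^2\nu_1+5\pi^2\nu_3=2\pi\mu_1+\tfrac32\mu_3,
\end{align*}
and the remaining six cases are handled in the same way, only with longer bookkeeping for $\mu_2\ast\mu_2$ and $\mu_1\ast\mu_2$. There is no genuine obstacle here; the substantive content has already been proved in Proposition \ref{prop:nu_table} (which rests on the template method and the geometry of the multiplication map $S^3\times S^3\to S^3$), and the corollary is simply a change-of-basis computation performed inside the finite-dimensional algebra $\calV^\infty(S^3)^{S^3\times S^3}$.
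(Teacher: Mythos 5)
Your proposal is correct and is exactly the paper's argument: the text preceding the Corollary says it follows by ``Combining Proposition \ref{prop:nu_table} and Lemma \ref{lemma_dictionnary_mu_nu}'', i.e.\ a change of basis from $\nu_k$ to $\mu_k$ using the established convolution table, which is precisely the mechanical computation you describe. Your sample verification of $\mu_1\ast\mu_1$ is also numerically correct.
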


On a flat space, the Alesker--Fourier transform is an isomorphism between the algebra of smooth valuations with respect to the Alesker product and the algebra of smooth valuations with respect to the convolution product. An analogous isomorphism for $S^3$ does not exist. 
	
	\begin{Corollary}
		The map $t \mapsto \pi^{-1} \nu_2$ covers an isomorphism of algebras 
		\begin{displaymath}
			\C[t]/(t^2+t^4) \to  \left(\mathcal V^\infty(S^3)^{S^3 \times S^3},*\right).
		\end{displaymath}
		In particular, there is no algebra isomorphism between the convolution algebra $\left(\mathcal V^\infty(S^3)^{S^3 \times S^3},*\right)$ and the product algebra $\left(\mathcal V^\infty(S^3)^{S^3 \times S^3},\cdot\right)$.
	\end{Corollary}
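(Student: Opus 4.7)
Let $t := \pi^{-1} \nu_2 \in \mathcal V^\infty(S^3)^{S^3\times S^3}$. The first step is to compute the powers $t^k$ using Proposition~\ref{prop:nu_table} and Lemma~\ref{lem:commutativity}. Directly from \eqref{eq:nu22},
\[
t^2 \;=\; \pi^{-2}\,\nu_2 * \nu_2 \;=\; \tfrac{1}{4}(\nu_1 - 2\nu_3).
\]
Then using \eqref{eq:nuk3}, \eqref{eq:nu12} and commutativity,
\[
t^3 \;=\; \pi^{-1}\nu_2 * \tfrac{1}{4}(\nu_1 - 2\nu_3) \;=\; \tfrac{1}{4\pi}\bigl(\nu_1*\nu_2 - 2\nu_2\bigr) \;=\; \tfrac{1}{2\pi}(\nu_0 - 2\nu_2),
\]
and finally by \eqref{eq:nu02} and \eqref{eq:nu22},
\[
t^4 \;=\; \tfrac{1}{2\pi^2}\bigl(\nu_2 *\nu_0 - 2\,\nu_2 * \nu_2\bigr) \;=\; -\tfrac{1}{4}(\nu_1 - 2\nu_3) \;=\; -t^2.
\]
Hence $t^4 + t^2 = 0$, so the evaluation map $\C[t] \to \mathcal V^\infty(S^3)^{S^3 \times S^3}$ sending $t \mapsto \pi^{-1}\nu_2$ factors through $\C[t]/(t^2+t^4)$.

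Second, I would show this factored map is an isomorphism. Writing $1 = \nu_3$, $t = \pi^{-1}\nu_2$, $t^2$, $t^3$ in the basis $(\nu_3,\nu_2,\nu_1,\nu_0)$ yields an upper-triangular change-of-basis matrix with nonzero diagonal entries $(1,\pi^{-1},\tfrac14,\tfrac{1}{2\pi})$, so $\{1,t,t^2,t^3\}$ is a $\C$-basis of $\mathcal V^\infty(S^3)^{S^3\times S^3}$. Since $\C[t]/(t^2+t^4)$ has dimension $4$, the surjective map of equal-dimensional vector spaces is an isomorphism of algebras.

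For the second assertion, I would distinguish the two algebras by the dimension of their nilradicals. By the Chinese remainder theorem,
\[
\C[t]/(t^2+t^4) \;=\; \C[t]/\bigl(t^2(t-i)(t+i)\bigr) \;\cong\; \C[s]/(s^2)\times \C\times \C,
\]
whose nilradical is $(s)\times 0 \times 0$, one-dimensional. On the other hand, in the product algebra the filtration satisfies $\mathcal W_i \cdot \mathcal W_j \subset \mathcal W_{i+j}$ and $\mathcal W_k = 0$ for $k>3$; thus the ideal $\mathcal V^\infty(S^3)^{S^3\times S^3}\cap \mathcal W_1 = \Span\{\mu_1,\mu_2,\mu_3\}$ is nilpotent of dimension $3$, and has codimension one (the quotient being $\C\cdot\chi$), so it coincides with the nilradical. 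Since the nilradical dimension is an invariant of a finite-dimensional commutative algebra, $1 \ne 3$ rules out any algebra isomorphism.

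The computations in the first paragraph are mechanical; the only real decision is which invariant to use in the last step to separate the two algebras. Counting idempotents would also work (the convolution algebra has $2\times 2\times 2 = 8$ idempotents while the product algebra, being local with residue field $\C$, has only $0$ and $\chi$), but the nilradical-dimension argument is more direct given that the structure of both algebras is already visible from the filtration.
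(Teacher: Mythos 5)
Your proposal is correct and follows essentially the same route as the paper: verify $t^4+t^2=0$ from Proposition~\ref{prop:nu_table}, observe $\{1,t,t^2,t^3\}$ is a basis to get the first isomorphism, and then distinguish the two algebras by comparing nilradicals. The only difference is in execution: you compute the exact nilradical dimension ($1$) of the convolution algebra via the CRT decomposition $\C[s]/(s^2)\times\C\times\C$, whereas the paper only shows $t^2$ is not nilpotent (since $(t^2)^k=(-1)^{k+1}t^2$) to conclude the nilradical has dimension strictly less than $3$; both suffice.
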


\proof
The first statement follows from Proposition \ref{prop:nu_table}. For the second statement, recall that the product algebra is isomorphic to $\C[t]/(t^4)$ \cite{fu_wannerer}, hence both algebras are of dimension $4$, with the equivalence classes of $1,t,t^2,t^3$ being a basis.
Let us compare their nilradicals. In the product algebra, the nilradical is the ideal $(t)$, which is of dimension $3$. In the convolution algebra, the nilradical is contained in the $3$-dimensional ideal $(t)$. However, as $(t^2)^k=(-1)^{k+1} t^2 \neq 0$ for all $k \geq 1$, it is of dimension strictly less than $3$.  
\endproof

\bibliographystyle{plain}
\bibliography{biblio}
\end{document}